\documentclass[11pt,a4paper,english]{amsart}

\usepackage{enumitem}

\usepackage{amssymb}
\usepackage[centertags]{amsmath}
\usepackage{amsfonts}
\usepackage{amsthm}
\usepackage{color}

\usepackage{fancyhdr}
\usepackage{amssymb}

\usepackage{amsmath}  

\usepackage{amsfonts}
\usepackage{amsthm}
\usepackage{color}
\usepackage{fouridx}

\usepackage{amssymb}

\usepackage{babel}

\usepackage{mathtools}

\mathtoolsset{showonlyrefs}

\usepackage{graphicx}
\usepackage{epstopdf}



\setlength{\textwidth}{14cm}



{\theoremstyle{plain}

\newtheorem{lemma}{Lemma}[section]
\newtheorem{theorem}[lemma]{Theorem}
\newtheorem{corollary}[lemma]{Corollary}
\newtheorem{proposition}[lemma]{Proposition}

} {\theoremstyle{definition}

\newtheorem{example}[lemma]{Example}
\newtheorem{remark}[lemma]{Remark}

\newtheorem{ass}[lemma]{Assumption}

}


%

\newcommand{\Err}{{e}}

\renewcommand{\epsilon}{\varepsilon}                 
\renewcommand{\phi}{\varphi}
\renewcommand{\theta}{\vartheta}
\renewcommand{\le}{\leqslant}
\renewcommand{\ge}{\geqslant}
\newcommand{\origsetminus}{} \let\origsetminus=\setminus     
\renewcommand{\setminus}{\!\origsetminus\!}
\newcommand{\origfoo}{} \let\origfoo=\sqrt           
\renewcommand{\sqrt}[1]{\origfoo{#1}\;}


\newcommand{\inner}[3][]{(#2,#3)_{#1}}

\newcommand{\RR}{{\mathbb{R}}}

\DeclareMathOperator*{\esssup}{ess\,sup}


\renewcommand{\Re}{\text{\rm Re}\,}                  
\newcommand{\R}{{\mathbb R}}                
\newcommand{\EE}{{\mathbb E}}

\newcommand{\calL}{\mathcal L}

\newcommand{\N}{{\mathbb{N}}}                

                          %
\renewcommand{\P}{P}



\newcommand{\D}{{\mathcal D}}

\newcounter{zahl}

\newcommand{\h}{n}                    

\newcommand{\n}{\Vert}



\newcommand{\bcase}{\begin{cases}}
\newcommand{\ecase}{\end{cases}}
\newcommand{\pmat}{\begin{pmatrix}}
\newcommand{\epmat}{\end{pmatrix}}

\newcommand{\barray}{\begin{array}{rcl}}
\newcommand{\earray}{\end{array}}

\newcommand{\del}[1]{}

\newcommand{\lqq}{\lefteqn}

%

\newcommand{\lk}{\left}
\newcommand{\rk}{\right}

\newcommand{\be} {\begin{enumerate} }
\newcommand{\ee} {\end{enumerate} }

\newcommand{\CF}{{ \mathcal{ F } }}

\newcommand{\CC}{{\mathbb{C}}}

\newcommand{\NN}{\mathbb N}

\newcommand{\PP}{{\mathbb{P}}}
\newcommand{\TT}{{\rm I \kern -0.2em T}}

\newcommand{\DEQS}{\begin{eqnarray*}}
\newcommand{\EEQS}{\end{eqnarray*}}
\newcommand{\DEQSZ}{\begin{eqnarray}}
\newcommand{\EEQSZ}{\end{eqnarray}}



\newcommand{\fahim}[1]{{\color{black}#1}}
\newcommand{\fahimm}[1]{{\color{black}#1}}


\begin{document}

\title[Approximation of stochastic fractional order evolution equations]{Some approximation results for mild solutions of stochastic fractional order evolution equations driven by Gaussian noise}

\author{K.~Fahim}
\address{Department of Mathematics\\
Institut Teknologi Sepuluh Nopember\\
Kampus ITS Sukolilo-Surabaya 60111, Indonesia}
\email{kfahim@matematika.its.ac.id}

\author{E.~Hausenblas}
\address{Department of Mathematics\\
Montanuniversity Leoben\\
8700 Leoben, Austria}
\email{erika.hausenblas@unileoben.ac.at}
\thanks{E. Hausenblas was partially supported by the Austrian Science Foundation (FWF) Project P 28010.}

\author{M.~Kov\'acs}
\address{Faculty of Information Technology and Bionics\\P\'azm\'any P\'eter Catholic University\\
Budapest, Hungary}
\email{kovacs.mihaly@itk.ppke.hu}
\thanks{M. Kovács acknowledges the support of the Marsden Fund of the Royal Society of New Zealand through grant. no. 18-UOO-143, the Swedish Research Council (VR) through grant no. 2017-04274 and the NKFIH through grant no. 131545. }

\date{}

\subjclass[2010]{45D05, 60H15, 60H20, 60G22, 65L03}
\keywords{stochastic partial differential equation, stochastic integro-differential equation, Wiener process,
fractal Wiener process, stochastic Volterra equation, finite element method, spectral Galerkin method, fractional partial differential equation}

\begin{abstract}
We investigate the quality of space approximation of a class of stochastic integral equations of convolution type with Gaussian noise. Such equations arise, for example, when considering
mild solutions of stochastic fractional order partial differential equations but also when considering mild solutions of classical stochastic partial differential equations.
The key requirement for the equations is a smoothing property of the deterministic evolution operator which is typical in parabolic type problems.
We show that if one has access to nonsmooth data estimates for the deterministic error operator  together with its derivative of a space discretization procedure, then one obtains error estimates in pathwise H\"older norms with rates that can be read off the deterministic error rates. We illustrate the main result by considering a class of stochastic fractional order partial differential equations and space approximations performed by spectral Galerkin methods and finite elements. We also improve an existing result on the stochastic heat equation.

%
\end{abstract}

\maketitle

\section{Introduction}

Let $H$ be a real separable Hilbert space and let  $W_H$ be a
$H$-cylindrical Wiener process on a complete, filtered probability space $(\Omega,\CF,(\CF_t)_{t\ge0},\PP)$ with respect to the filtration $(\CF_t)_{t\ge0}$. To be more precise, we assume that $(\CF_t)_{t\ge0}$ satisfies  the usual conditions, which are, $(\CF_t)_{t\ge0}$ is right-continuous and $\CF_0$ contains all $\PP$-nullsets of $\CF$.
Let $H_i$, $i=0,1,2$ be real separable Hilbert spaces to be  specified later on but they are typically associated with fractional powers of a linear operator.
We consider integral equations of the form
\begin{equation}\label{varcons0}
U(t)=X^0(t)+\int_{0}^{t}S^2(t-s)F(s,U(s))\,ds+\int_{0}^{t}S^1(t-s)G(s,U(s))\,dW_H(s). 
\end{equation} 
Here, the non-linear functions $G:[0,T]\times H_0\to L_{HS}(H,H_1)$, where $L_{HS}(H,H_1)$ denotes  the space of Hilbert-Schmidt operators form $H$ to $H_1$, and $F:[0,T]\times H_0\to H_2$
 are assumed to satisfy global Lipschitz and linear growth conditions. The operator families $S^i(t):H_i\to H_0$, $i=1,2,$ are assumed to admit certain smoothing estimates for $t>0$.

 \medskip

A typical example where the integral equation \eqref{varcons0} arises is when defining mild solutions of fractional order equations of the form \cite{DL},
\begin{equation}\label{SDE-intro}
\begin{aligned} U(t) & =  U_0+tU_1-A\int_0 ^t \frac{(t-s)^{\alpha-1}}{\Gamma(\alpha)} U(s)\,ds
\\ &{}\quad + \int_0 ^t \frac{(t-s)^{\kappa-1}}{\Gamma(\kappa)} F(s,U(s))\,ds
\\ &{}\quad + \int_0^ t \frac{(t-s)^{\beta-1}}{\Gamma(\beta)} \, G(s,U(s))\,dW_H(s);\quad t\in [0,T],
\end{aligned}
\end{equation}
see Section \ref{sg_one} for more details.
Here, $A$ is a densely defined, possibly unbounded, non-negative operator on the Hilbert space $H_0$,  $\alpha\in (0,2)$, $\beta>\frac{1}{2}$ and $\kappa>0$. The restriction $\beta>\frac12$ is needed otherwise the stochastic integral does not make sense even for constant $G$ as for $\beta\leq \frac12$ the function $t\to t^{\beta-1}$ is not square integrable on $[0,T]$.
For  $\alpha \in (0,1)$ (and $U_1=0$ in this case), equation \eqref{SDE-intro} becomes a fractional stochastic heat equation,
for $\alpha\in (1,2)$ equation \eqref{SDE-intro} becomes a fractional stochastic wave equation.

\medskip

Time fractional stochastic heat type equations might be used to model \fahim{phenomena} with random effects with thermal memory \cite{MN15}.
 In its simplest form, \fahim{the fractional stochastic} heat equation has the form
\begin{equation}\label{eq:shebase}
dU+AD_t^{1-\alpha}(U)dt=F(U)dt+G(U)dW_H(t); \,U(0)=U_0,\,\alpha\in (0,1),
\end{equation}
where
$$
D_t^{1-\alpha}(U)(t)=\frac{1}{\Gamma(\alpha)}\frac{d}{dt}\int_0^t(t-s)^{\alpha-1}U(s)\,d s,\,\alpha\in (0,1).
$$
Equation \eqref{eq:shebase}  corresponds to \eqref{SDE-intro} with $\beta=\kappa=1$, $\alpha\in (0,1)$ and $U_1=0$.

Time fractional stochastic wave type equations may be used to model random forcing effects in viscoelastic materials which exhibit a simple power-law creep behaviour \cite{CDP,KLS20}.
 The simplest form of the stochastic wave equation takes the form
\begin{equation}\label{swebase}
dU+AI^{\alpha-1}(U)dt=U_1dt+F(U)dt+G(U)dW_H(t); \,U(0)=U_0,\,\alpha\in (1,2),
\end{equation}
where
$$
I^{\alpha-1}(U)(t)=\frac{1}{\Gamma(\alpha-1)}\int_0^t(t-s)^{\alpha-2}U(s)\,ds,\, \alpha>1,
$$
and $U_1$ is the initial data for $\dot{U}$. Equation \eqref{swebase}   corresponds to  \eqref{SDE-intro} with $\beta=\kappa=1$ and $\alpha\in (1,2)$.

In both cases the parameters $\beta$ and $\kappa$ can be used to model the time-regularity of the stochastic, respectively, the deterministic feedback. For example, when $\beta<1$, then the driving process is rougher than the Wiener process, while if $\beta>1$, it is smoother. It is important to note that while the parameter choice in \eqref{SDE-intro} corresponding to $\alpha=\beta=\kappa=1$ is the standard stochastic heat equation, the parameter choice $\beta=\kappa=1$ and $\alpha= 2$ does not result in the standard stochastic wave equation but in something much more irregular as in this case the noise will drive $\dot{U}$ and not $\ddot{U}$.  The standard stochastic wave equation would correspond to the choice  $\beta=\kappa=2$ and $\alpha= 2$. This case is not covered by our paper, since in our setting  crucial estimates  has constants blowing up as $\alpha\to 2$. In particular, the constants in the fundamental regularity estimates \eqref{eq:sabe} and \eqref{eq:sabe1} will blow up, and therefore we cannot say anything for the limiting case $\alpha=2$ by taking $\alpha\to 2$.

Our aim is to approximate stochastic integro-differential equations of the type \eqref{varcons0} and derive error estimates in pathwise H\"older norms in time. To this end we consider
approximations of \eqref{varcons0} given by the following integral equation
\begin{align}\label{varconsni}
U_n(t)&=X_n^0(t)+\int_{0}^{t}S_n^2(t-s)F(s,U_n(s))\,ds+\int_{0}^{t}S_n^1(t-s)G(s,U_n(s))\,dW_H(s). 
\end{align}
Here, the approximation $U_n$ can typically be a  spatial approximation derived via a spectral Galerkin or a continuous finite element method. The main \fahim{purpose} of our work is to derive rates of convergence of the strong error over $C^\gamma([0,T];H_0)$; that is, we derive error estimates for $U-U_n$ in $L^p(\Omega;C^\gamma([0,T];H_0))$. Here,  for a function $f:[0,T]\to E$, where $E$ is a Banach space, the H\"older seminorm is defined by
$$
\|f\|_{C^\gamma([0,T];E)}=\sup_{\stackrel{t,s \in [0,T]}{t\neq s}}\frac{\|f(t)-f(s)\|_E}{|t-s|^\gamma},\,\gamma \in (0,1).
$$
In particular, we derive a rate of convergence of the strong error over $C([0,T];H_0)$ (the space of $H_0$-valued continuous functions on $[0,T]$ equipped with the supremum norm); that is, an error estimate for $U-U_n$ in $L^p(\Omega;C([0,T];H_0))$. These error estimates are derived given that one has access to deterministic nonsmooth error estimates for $S^i-S^i_n$ and $\frac{d}{dt}(S^i-S^i_n)$. The main point is that the rate of convergence in these norms can be directly read off  the deterministic error rates. While traditionally error estimates for $\frac{d}{dt}(S^i-S^i_n)$ are seldom considered they are not out of reach in many cases (see, for example, \cite[Theorem 3.4]{Thomeebook}, for finite elements for parabolic problems). We demonstrate by two examples how to obtain such estimates for fractional order equations both for spectral Galerkin and for a standard continuous finite element method. In general, when $S^i$ are resolvent families for certain parabolic integro-differential problems arising, for example, in viscoelasticity, \cite{baeumeretall,CDP,KLS20,pruss}, these nonsmooth data estimates are direct consequences of the smoothing property of the resolvent family of the linear deterministic problem, at least for the spectral Galerkin method.

Our motivation for considering estimates in such norms is twofold. Firstly, estimates with respect to the $L^p(\Omega;C([0,T];H_0))$-norm are useful for
using standard localization arguments \cite{Gyo,Pri} in order to extend approximation results for equations with globally Lipschitz
continuous nonlinearities to results for equations with nonlinearities that are only Lipschitz continuous
on bounded sets. We refer to   \cite[Section 4]{sonja2} for further details in the semigroup case. Secondly, as Remark \ref{rem:reg} shows, the processes $U$ and $U_n$ can be viewed as random variables in $L^p(\Omega;C^\gamma([0,T];H_0))$ and therefore it natural to consider the approximation error in the corresponding norm. For further applications of approximations in H\"older norms  we refer to  \cite{sonja2} and \cite{bally}.

Finally, we would like to emphasize that the derivation of error estimates in the $L^p(\Omega;C([0,T];H_0))$-norm is usually a nontrivial task even when the operator \fahimm{family} $S^1$ is a semigroup. This is because, in general, the stochastic convolution appearing  in \eqref{varcons0} fails to be a semimartingale and hence Doob’s
maximal inequality cannot be applied to obtain estimates with respect to the $L^p(\Omega;C([0,T];H_0))$-norm. In case the operator family $S^1$ is a semigroup one may employ the factorization method of Da Prato, Kwapien and Zabczyk \cite{DKZ} directly to obtain such estimates, for an instance, see, for example, \cite{KLME}. However, when the semigroup property does not hold; that is, there is a nontrivial memory effect in the equation, then even this approach fails.


\subsection*{The state of the art}
For analytical results, such as existence, uniqueness and regularity of various stochastic Volterra-type integro-differential equations driven by Wiener noise we refer to \cite{baeumeretall,Barbu14,Bona13,Bona06,Bona09a,CDP,CDH,KarLiz07,KarLiz09,KarZab00} and for results concerning asymptotic behaviour of solutions to \cite{Bona12,FN17}. The particular case of fractional order equations driven by Wiener noise are considered in \cite{Bona03,DL08,DL11a,DL,KimKim19,LotRoz20,MN15}. Various integro-differential equations driven by L\'evy noise are analysed in \cite{DGy14,HaKo18} with the particular case of  fractional order equations in \cite{Bona09}. Finally a class of linear Volterra integro-differential equations driven by fractional Brownian motion are investigated in
\cite{Sp09,SpW10}.

The main \fahim{purpose} of our work is to derive rates of convergence for space approximations. Here, we consider the strong error over $C^\gamma([0,T];H_0)$ and $C([0,T];H_0)$, that is, we derive error estimates in $L^p(\Omega;C^\gamma([0,T];H_0))$ and $L^p(\Omega;C([0,T];H_0))$. To our knowledge all existing work on the numerical analysis of stochastic fractional order differential equations are considering a much weaker error measure; that is, the error measure $\sup_{t\in [0,T]}\EE\|U(t)-U_n(t)\|_{H_0}^p$ (mainly for $p=2$), see, for example, \cite{Gunz19b,Gunz19a,JinYNZh19,mishi1,KLS20,WuYanYan20}, or the weak error \cite{AKL16,JinYNZh19,KLS15,KP14}. For similar works in the setting of abstract evolution equations without memory kernel we refer to  \cite{sonja1,sonja3}, 
 and \cite{sonja2}.

\subsection*{The structure of the paper}

The paper is organized as follows. In Section \ref{AAR} we introduce the abstract setting and the assumptions which we will use.
Here we illustrate the applicability of our setting by several examples.
 In Lemma \ref{lem:eu} we  state and prove
a basic existence and uniqueness result for the solution of \eqref{varcons0} and specify  the time-regularity of the solution in Remark \ref{rem:reg}.
 Our main abstract approximation result estimating the difference  $U-U_n$ in  $L^p(\Omega;C^\gamma([0,T];H_0))$ is contained in Theorem \ref{thm:main} while in $L^p(\Omega;C([0,T];H_0))$ in Corollary \ref{cor:main}.
In Section \ref{app} we apply Theorem \ref{thm:main} and Corollary \ref{cor:main} to two typical space discretization schemes.
In particular,
in Subsection \ref{sg_one} we consider the general fractional-order equation \eqref{SDE-intro} (with time-independent coefficients, for simplicity) and its spectral Galerkin approximation and apply Theorem \ref{thm:main} and Corollary \ref{cor:main} to obtain rates of convergence depending on the parameters in the equation. In Examples \ref{ex:sg1}, \ref{ex:sg2}, and \ref{ex:sg3} we state the results in some simplified settings for the stochastic heat equation, the fractional stochastic heat and wave equations, respectively.
In Subsection \ref{sg_two} we consider a fractional stochastic wave equation and its finite element approximation and apply Theorem \ref{thm:main} and Corollary \ref{cor:main} again to obtain rates of convergence. Here, in Remark \ref{rem:she}, we point out in which way the stochastic heat equation fits in our abstract framework and we also show that using the setup of the paper one may remove some unnecessary smoothness
  assumption on $G$ which was present in \cite[Proposition 4.2]{sonja3}. Finally, in Section \ref{sec:ne}, we present some numerical experiments for a fractional stochastic wave equation to verify the theoretical rates obtained in Subsections \ref{sg_one} and \ref{sg_two}. In particular, in Subsection \ref{sec:numerical:MLEI}, we present some numerical results for the spectral Galerkin approximation and space-time white noise, while in Subsection \ref{sec:numerical:LCQM}, we describe some experiments for the finite element method and trace-class noise.

\subsection*{Notation} We denote by $\RR_0^+$ the set $\{t\in\RR:t\geq 0\}$. For Banach spaces $V$ and $W$ we denote by $\mathcal{L}(V,W)$ the space of bounded linear operators from $V$ into $W$  endowed with the  norm
$$
\|A\|_{\mathcal{L}(V,W)} = \inf\{ C \geq 0 : \|Av\|_W \leq C \|v\|_V \mbox{ for all } v \in V \}, ~\text{for }A\in \mathcal{L}(V,W).
$$
If $V=W$, we write $\mathcal{L}(V)$ for $\mathcal{L}(V,W)$ and we denote the norm by $\|\cdot\|_{\mathcal{L}(V)}$. For an operator valued function $S:[a,b]\to \mathcal{L}(V,W)$ we use the notation 
$$
\dot{S}(t)v:=\frac{d}{dt}\big(t\mapsto S(t)v\big)(t),\,v\in V,
$$ whenever $t\mapsto S(t)v$ is differentiable at $t$.
Furthermore, for two real separable Hilbert spaces $V$ and $W$,  we denote by $L_{HS}(V,W)$ the space of Hilbert-Schmidt operators from $V$ to $W$ equipped with the norm
$$
\|T\|_{\fahimm{L_{HS}(V,W)}}^2=\sum\limits_{n=1}^{\infty}\|Te_n\|_W^2,~\text{for }T\in L_{HS}(V,W),
$$
for an orthonormal basis $(e_n)\subset V$.
Let $H$ be a real, separable, infinite-dimensional Hilbert space and 
let $A:D(A)\subseteq H \to H$, be an unbounded, self-adjoint, and positive definite operator with compact inverse. For $\xi\in \mathbb{R}$ one defines the fractional power $A^\xi$ of $A$ via the standard spectral functional calculus of $A$. For $\xi\geq 0$ we equip $D(A^\xi)$, where $D(A^\xi)$ denotes the domain of $A^\xi$, with the norm $\|x\|_{D(A^\xi)}:=\|A^\xi x\|_H$, $x\in A^\xi$.
\fahim{For $\delta\geq 0$,} let $H_{-\delta}^A$, denote the completion of $H$ with respect to the norm 
$
\|x\|_{\fahim{-\delta}}:=\|A^{-\delta}x\|_{H}, 
$
\fahim{ $x\in H_{-\delta}^A$.}
Let $E$ be a Banach space. We  denote by $L^p([0,T];E)$, $1\leq p<\infty$, the space of all measurable functions $f:[0,T]\to E$ being $L^p$ integrable equipped the with the standard norm
$$
\|f\|_{L^p([0,T];E)}:=\Big(\int_0^T \|\fahimm{f(t)} \|_E^p~dt\Big)^{1/p}.
$$
Moreover if $p=\infty$, then $L^\infty([0,T];E)$ denotes  the space of all measurable functions  $f$ from $[0,T]$ to $E$ being essential bounded in $E$
 on $[0,T]$ equipped  with the norm
$$
\|f\|_{L^{\infty}([0,T];E)}=\esssup_{t\in[0,T]}\|f(t) \|_E.
$$
We denote by $C([0,T];E)$ the space of continuous functions $f:[0,T]\to E$ endowed with the usual supremum norm.
Let $C^\gamma([0,T];E)$, $0 <\gamma<1$, denote the space of functions $f:[0,T]\to E$ such that the seminorm
$$
\|f\|_{C^\gamma([0,T];E)}:=\sup_{\stackrel{t,s \in [0,T]}{t\neq s}}\frac{\|f(t)-f(s)\|_E}{|t-s|^\gamma}<\infty.
$$
Let $(\Omega,\mathcal{F},\mathbb{P})$ be a probability space and let
$L^p(\Omega;E)$ denote the space of random variables $X\colon
(\Omega,\mathcal{F})\rightarrow (E,\mathcal{B}(E))$; that is, $\mathcal{F}/\mathcal{B}(E)$-measurable mappings $X:\Omega\to E$, where $\mathcal{B}(E)$ denotes
the Borel $\sigma$-algebra of $E$, such
that
\begin{equation*}
\|X\|_{L^p(\Omega;E)}^p=\mathbb{E}\big(\|X\|_{E}^p\big)
=\int\limits_{\Omega}\|X(\omega)\|_{E}^p\,d \mathbb{P}(\omega)<\infty.
\end{equation*}

\section{The  abstract approximation result}\label{AAR}

Let $H$ 
and $H_0$ be two real separable Hilbert spaces. Let  $W_H$ be a
$H$-cylindrical Wiener process on a complete, filtered probability space $(\Omega,\CF,(\CF_t)_{t\ge0},\PP)$ with respect to the filtration $(\CF_t)_{t\ge0}$, the latter satisfying the usual conditions.
We consider the following integral equation
\begin{equation}\label{varcons}
U(t)=X^0(t)+\int_{0}^{t}S^2(t-s)F(s,U(s))\,ds+\int_{0}^{t}S^1(t-s)G(s,U(s))\,dW_H(s). 
\end{equation}

To specify the assumptions on the coefficients $F$ and $G$, let us fix two real separable Hilbert spaces $H_1$ and $H_2$.  Later on, we will see in the examples that these spaces will be interpolation spaces associated with a linear operator.
\begin{ass}\label{hypogeneral}
We assume that
\begin{enumerate}
\item the mapping $G: [0,T]\times H_0\to L_{HS}(H,H_1)$ is Lipschitz continuous and of linear growth in the second variable uniformly in $[0,T]$; that is, there is a constant $C_G>0$ such that
$$
\|G(t,u)-G(s,v)\|_{ L_{HS}(H,H_1)}\leq C_G(|t-s|+\|u-v\|_{H_0}),\,u,v\in H_0, \,s,t\in [0,T];
$$
and
$$
\|G(t,u)\|_{ L_{HS}(H,H_1)}\leq C_G(1+\|u\|_{H_0}),\,u\in H_0, \,t\in [0,T];
$$
\item the mapping $F: [0,T]\times H_0\to H_2$ is Lipschitz continuous and of linear growth in the second variable uniformly in $[0,T]$; that is
there is a constant $C_F>0$ such that
$$
\|F(t,u)-F(s,v)\|_{H_2}\leq C_F(|t-s|+\|u-v\|_{H_0}),\,u,v\in H_0, \,s,t\in [0,T];
$$
and
$$
\|F(t,u)\|_{ H_2}\leq C_F(1+\|u\|_{H_0}),\,u\in H_0, \,t\in [0,T];
$$

\item the $H_0$-valued process $\{X^0(t)\}_{t\in [0,T]}$ is predictable and, for some $p>2$, $$X^0\in L^p(\Omega; L^\infty([0,T];H_0)).$$
\end{enumerate}
\end{ass}
We note that the $t$-dependence of $F$ and $G$ may be weakened considerably and they may also be stochastic.
Concerning the families $S^i$ we suppose the following.
\begin{ass}\label{ass:semigroup}
We assume that the linear operator families $S^i$ are strongly continuously differentiable as operators from $H_i$ to $H_0$ on $(0,T)$, $i=1,2$. Furthermore, we assume that
\begin{enumerate}
  \item there exists a function $s_1\in L^ 1 ([0,T];\RR^+_0)$ and a constant $0<\gamma_1<\frac 12 $ such that
\begin{align}\label{ass:psi}
  t^{\gamma_1} \n \dot S^1(t) x\n_{H_0} + t^{\gamma_1-1} \n  S^1(t) x\n_{H_0}  &
\leq s_1(t)\n x\n_{H_1}, \, \textrm{for all } x\in H_1,\,t\in (0,T);
\end{align}
  \item there exists a function $s_2\in L^ 1 ([0,T];\RR^+_0)$ and a constant $0<\gamma_2<1$ such that
\begin{align}\label{ass:psi1}
  t^{\gamma_2} \n \dot S^2(t) x\n_{H_0} + t^{\gamma_2-1} \n  S^2(t) x\n_{H_0}  &
\leq s_2(t)\n x\n_{H_2}, \, \textrm{for all } x\in H_2,\,t\in (0,T).
\end{align}
\end{enumerate}
\end{ass}
We consider an approximation of \eqref{varcons}  given by the following integral equation
\begin{equation}\label{varconsn}
U_n(t)=X_n^0(t)+\int_{0}^{t}S_n^2(t-s)F(s,U_n(s))\,ds+\int_{0}^{t}S_n^1(t-s)G(s,U_n(s))\,dW_H(s), 
\end{equation}
where  $\{X_n^0(t)\}_{t\in [0,T]}$ is $H_0$-predictable and $X^0_n\in  L^p(\Omega; L^\infty([0,T];H_0))$. Concerning the approximation we make the following assumptions.
\begin{ass}\label{ass:semigroup:app} Let $\gamma_1$ and $\gamma_2$ \fahim{be} as in Assumption \ref{ass:semigroup}.
For bounded functions $r_i:\NN\to \RR^ +$, $i=1,2$
consider
$$
\Psi^{i}_\h(t):= \frac 1 {r_i(n)}
 \lk[ S^i(t)- S^i_\h(t)\rk],\,i=1,2.
$$
We assume that the linear operator families $S^i_n$ are strongly continuously differentiable as operators from $H_i$ to $H_0$  on $(0,T)$, $i=1,2$. Furthermore, we assume that
\begin{enumerate}
  \item there exists a function $h_1\in L ^1 ([0,T];\RR ^+_0)$  such that  for all $\h\in\NN$ we have
\begin{align}\label{ass:psi2}
  t^{\gamma_1} \n \dot \Psi^1_\h(t) x\n_{H_0} + t^{\gamma_1-1} \n  \Psi^1_\h(t) x\n_{H_0}  &
\leq h_1(t)\n x\n_{H_1}, \, \textrm{for all } x\in H_1,\,t\in (0,T);
\end{align}
  \item there exists a function $h_2\in L ^1 ([0,T];\RR ^+_0)$
   such that for all $\h\in\NN$ we have
\begin{align}\label{ass:psi22}
  t^{\gamma_2} \n \dot \Psi^2_\h(t) x\n_{H_0} +  t^{\gamma_2-1} \n  \Psi^2_\h(t) x\n_{H_0}  &
\leq h_2(t)\n x\n_{H_2}, \, \textrm{for all } x\in H_2,\,t\in (0,T).
\end{align}
\end{enumerate}
\end{ass}
Note that Assumptions \ref{ass:semigroup} and \ref{ass:semigroup:app} imply that for some $C>0$, for all $n\in \N$,
  \begin{align}\label{eq:psin}
  t^{\gamma_1} \n \dot S_n^1(t) x\n_{H_0} + t^{\gamma_1-1} \n  S_n^1(t) x\n_{H_0}  &
\leq (s_1(t)+Ch_1(t))\n x\n_{H_1},
\end{align}
 for all  $x\in H_1$, $t\in (0,T)$, and
\begin{align}\label{eq:psinn}
  t^{\gamma_2} \n \dot S_n^2(t) x\n_{H_0} + t^{\gamma_2-1} \n  S_n^2(t) x\n_{H_0}  &
\leq (s_2(t)+Ch_2(t))\n x\n_{H_2},
\end{align}
 for all  $x\in H_2$, $t\in (0,T)$.\\

 \begin{example}[Stochastic heat equation]\label{ex:main} In order to illustrate  a typical situation of our basic assumptions, we first consider  the familiar setting of the heat equation.
 Let $A:D(A)\subset H_0\to H_0$ be an unbounded, densely defined, self-adjoint, positive definite operator with compact inverse. Let $\lambda_n$ denote the eigenvalues of $A$, arranged in a non-decreasing order, with corresponding orthonormal eigenbasis $(e_n)\subset H_0$. A typical example is when $H_0=L^2(\mathcal{D})$, where $\mathcal{D}\subset \R^d$ is a bounded domain with smooth or convex polygonal boundary, and $A=-\Delta$ with Dirichlet zero boundary conditions.
 In this case, we have  $S^1(t)=S^2(t)=e^{-tA}$.
 Typically the spaces $H_1$ and $H_2$ are related to the fractional powers of $A$ but for simplicity we take  $H_1=H_2=H=H_0$. In this case, Assumptions  \ref{hypogeneral} become standard global Lipschitz assumptions on the coefficients in the equation. Furthermore, due to the analyticity of the semigroup $S(t):=e^{-tA}$ one has the well-known smoothing properties
 $$
 \|A^\xi S(t)x\|_{H_0}\leq Mt^{-\xi}\|x\|_{H_0},\quad  \|A^\xi \dot S(t)x\|_{H_0}\leq Mt^{-\xi-1}\|x\|_{H_0},
 $$
 for $t>0$ and $\xi\geq 0$. Then Assumption \ref{ass:semigroup}-(1) is satisfied for any $0<\gamma_1<\frac12$ for $s_1(t)=Mt^{\gamma_1-1}$ and  Assumption \ref{ass:semigroup}-(2) is satisfied for any $0<\gamma_2<1$ for $s_2(t)=Mt^{\gamma_2-1}$. The simplest example of an approximation procedure that we have in mind is the spectral Galerkin method. We define a family of finite-dimensional
subspaces \fahim{$\{H^n:n\in\NN\}$} of $H_0$ by
$$\fahim{H^n}=\text{span}\{e_1,e_2,\dots,e_n\}$$ and define the orthogonal projection
\begin{equation} \label{PN}
 \mathcal{P}_n\colon H_0\to \fahim{H^n},\quad
  \mathcal{P}_n x=\sum_{k=1}^n ( x,e_k)_{H_0}
   e_k,\quad x\in H_0,
\end{equation}
where $(\cdot\,,\cdot)_{H_0}$ denotes the inner product of $H_0$. It is easy to see that
\begin{equation}
 \|A^{-\nu}(I-\mathcal{P}_n)\|_{\mathcal{L}(H_0)}= \|(I-\mathcal{P}_n)A^{-\nu}\|_{\mathcal{L}(H_0)}
  =\sup_{k\geq n+1}\lambda_k^{-\nu}
  =\lambda_{n+1}^{-\nu},\quad \nu \geq 0.
  \label{PN:est}
\end{equation}
The approximating operators become $$S^1_n(t)=S^2_n(t)=\mathcal{P}_nS(t)=S(t)\mathcal{P}_n:=S_n(t).$$
Using  eigenfunctions and eigenvalues of $A$ we can write
$$
S_n(t)x=\sum_{k=1}^ne^{-\lambda_k t}(x,e_k)_{H_0}e_k.
$$
For $\nu\geq 0$, we set $r_1(n)=r_2(n)=\lambda_{n+1}^{-\nu}$. We then have
$$
\Psi^1_n(t)=\Psi^2_n(t)=\lambda_{n+1}^{\nu}(S(t)-S_n(t))=\lambda_{n+1}^{\nu}(I-\mathcal{P}_n)S(t)
$$
with
$$
\|\Psi^i_n(t)x\|_{H_0}\leq C\|A^\nu S(t)x\|_{H_0}\leq C t^{-\nu}\|x\|_{H_0},\,i=1,2\text{ and }n\in \N,
$$
and
$$
\|\dot{\Psi}^i_n(t)x\|_{H_0}\leq C\|A^\nu \dot{S}(t)x\|_{H_0}\leq C t^{-\nu-1}\|x\|_{H_0},\,i=1,2\text{ and }n\in \N.
$$
Thus, Assumption \ref{ass:semigroup:app} is satisfied with $r_1(n)=\lambda_{n+1}^{-\nu_1}$ with $\nu_1<\gamma_1$ and $h_1(t)=Ct^{\gamma_1-1-\nu_1}$ and $r_2(n)=\lambda_{n+1}^{-\nu_2}$ with $\nu_2<\gamma_2$ and $h_2(t)=Ct^{\gamma_2-1-\nu_2}$.
 \end{example}
 \begin{example}[Fractional stochastic heat equation]\label{ex:main1} Here we consider the fractional stochastic heat equation \eqref{eq:shebase} with mild solution given by \eqref{eq:UUUU} with parameters $\alpha\in (0,1)$, $\beta=\kappa=1$, $u_0=U_0$ and $u_1=0$, where the operator family $S^{\alpha,\beta}$ is defined by \eqref{eq:shatab} via its Laplace transform. We use the setting of the previous example for $A$, $F$ and $G$; that is, consider the global Lipschitz case. In this case we have $S^{1}(t)=S^2(t)=S^{\alpha,1}(t)$ with smoothing properties specified in  Lemma \ref{lem:sm}. In particular, we have
  $$
 \|A^\xi S^i(t)x\|_{H_0}\leq Mt^{-\alpha \xi}\|x\|_{H_0},\quad  \|A^\xi \dot S^i(t)x\|_{H_0}\leq Mt^{-\alpha\xi-1}\|x\|_{H_0},\,i=1,2,
 $$
 for $\xi\in [0,1]$ and $t>0$. Then, Assumption \ref{ass:semigroup}-(1) is satisfied for any $0<\gamma_1<\frac12$ for $s_1(t)=Mt^{\gamma_1-1}$ and  Assumption \ref{ass:semigroup}-(2) is satisfied for any $0<\gamma_2<1$ for $s_2(t)=Mt^{\gamma_2-1}$. The approximating operators in this case become
 $$
 S^1_n(t)=S^2_n(t)=\mathcal{P}_nS^{\alpha,1}(t)=S^{\alpha,1}(t)\mathcal{P}_n:=S^{\alpha,1}_n(t).
 $$
  For $\nu\in [0,1]$, we set $r_1(n)=r_2(n)=\lambda_{n+1}^{-\nu}$. We then have
$$
\Psi^1_n(t)=\Psi^2_n(t)=\lambda_{n+1}^{\nu}(S^{\alpha,1}(t)-S^{\alpha,1}_n(t))=\lambda_{n+1}^{\nu}(I-\mathcal{P}_n)S^{\alpha,1}(t)
$$
with
$$
\|\Psi^i_n(t)x\|_{H_0}\leq C\|A^\nu S^{\alpha,1}(t)x\|_{H_0}\leq C t^{-\alpha\nu}\|x\|_{H_0},\,i=1,2\text{ and }n\in \N,
$$
and
$$
\|\dot{\Psi}^i_n(t)x\|_{H_0}\leq C\|A^\nu \dot{S}^{\alpha,1}(t)x\|_{H_0}\leq C t^{-\alpha\nu-1}\|x\|_{H_0},\,i=1,2\text{ and }n\in \N.
$$
Thus, Assumption \ref{ass:semigroup:app} is satisfied with $r_1(n)=\lambda_{n+1}^{-\nu_1}$ with $\nu_1<\frac{\gamma_1}{\alpha}$ and $h_1(t)=Ct^{\gamma_1-1-\alpha\nu_1}$ and $r_2(n)=\lambda_{n+1}^{-\nu_2}$ with $\nu_2<\frac{\gamma_2}{\alpha}$ and $h_2(t)=Ct^{\gamma_2-1-\alpha\nu_2}$. It is important to note that the additional restriction $\max(\nu_1,\nu_2)\leq 1$ applies as $S^{\alpha,1}$ has only the limited smoothing properties shown in Lemma \ref{lem:sm}. This implies that the rates improve as $\alpha$ decreases to $\gamma_i$, however, remain constant once we have $\alpha\leq \gamma_i$.

 \end{example}

 \begin{example}[Fractional stochastic wave equation]\label{ex:main2} Here we consider the fractional stochastic wave equation \eqref{swebase} with mild solution given by \eqref{eq:UUUU} with parameters $\alpha\in (1,2)$, $\beta=\kappa=1$, $u_0=U_0$ and $u_1=U_1$,  where the operator family $S^{\alpha,\beta}$ is defined by \eqref{eq:shatab} via its Laplace transform. We use the setting of the previous example for $A$, $F$ and $G$; that is, consider the global Lipschitz case. \fahim{In this case we have} $S^{1}(t)=S^2(t)=S^{\alpha,1}(t)$ with smoothing properties specified by Lemma \ref{lem:sm}
  $$
 \|A^\xi S^i(t)x\|_{H_0}\leq Mt^{-\alpha \xi}\|x\|_{H_0},\quad  \|A^\xi \dot S^i(t)x\|_{H_0}\leq Mt^{-\alpha\xi-1}\|x\|_{H_0},\quad \fahim{i=1,2,}
 $$
 for $\xi\in [0,1]$ and $t>0$. Then, Assumption \ref{ass:semigroup} (1) is satisfied for any $0<\gamma_1<\frac12$ for $s_1(t)=Mt^{\gamma_1-1}$ and  Assumption \ref{ass:semigroup} (2) is satisfied for any $0<\gamma_2<1$ for $s_2(t)=Mt^{\gamma_2-1}$. The approximating operators in this case become $$S^1_n(t)=S^2_n(t)=\mathcal{P}_nS^{\alpha,1}(t)=S^{\alpha,1}(t)\mathcal{P}_n:=S^{\alpha,1}_n(t).$$ For $\nu\in [0,1]$, we set $r_1(n)=r_2(n)=\lambda_{n+1}^{-\nu}$. We then have
$$
\Psi^1_n(t)=\Psi^2_n(t)=\lambda_{n+1}^{\nu}(S^{\alpha,1}(t)-S^{\alpha,1}_n(t))=\lambda_{n+1}^{\nu}(I-\mathcal{P}_n)S^{\alpha,1}(t)
$$
with
$$
\|\Psi^i_n(t)x\|_{H_0}\leq C\|A^\nu S^{\alpha,1}(t)x\|_{H_0}\leq C t^{-\alpha\nu}\|x\|_{H_0},\,i=1,2\text{ and }n\in \N,
$$
and
$$
\|\dot{\Psi}^i_n(t)x\|_{H_0}\leq C\|A^\nu \dot{S}^{\alpha,1}(t)x\|_{H_0}\leq C t^{-\alpha\nu-1}\|x\|_{H_0},\,i=1,2\text{ and }n\in \N.
$$
Thus, Assumption \ref{ass:semigroup:app} is satisfied with $r_1(n)=\lambda_{n+1}^{-\nu_1}$ with $\nu_1<\frac{\gamma_1}{\alpha}$ and $h_1(t)=Ct^{\gamma_1-1-\alpha\nu_1}$ and $r_2(n)=\lambda_{n+1}^{-\nu_2}$ with $\nu_2<\frac{\gamma_2}{\alpha}$ and $h_2(t)=Ct^{\gamma_2-1-\alpha\nu_2}$. We see here that the rate deteriorates with increasing $\alpha$.

 \end{example}
We will often make use of the following results on the H\"older regularity of deterministic and stochastic convolutions.
\begin{lemma}\label{prop:detHolder}
Let $Y_1$ and $Y_2$ be real separable Hilbert spaces. Let $T>0$ and suppose that $\Phi\in L^ {p}(\Omega;L^{\infty}([0,T];Y_1))$ for some $p\in
[1,\infty)$.
Let $\Psi:[0,T]\rightarrow \calL(Y_1,Y_2)$ be a mapping  such that the mapping $t\mapsto\Psi(t) x$ is continuously
differentiable on $(0,T)$ for all $x\in
 Y_1$. Suppose, moreover, that there exists a function $g\in L ^1 ([0,T];\RR ^+_0)$ and a
constant $\theta\in (0,1)$ such that for all $t\in (0,T)$ we have
\begin{align*}
t^\theta \n \dot \Psi(t) x\n_{Y_2} + \theta t^{\theta-1} \n \Psi(t)
x\n_{Y_2}  & \leq g(t)\n x\n_{Y_1}, \quad \textrm{for all } x\in Y_1.
\end{align*}
Then,
\begin{enumerate}[label=(\alph*)]
\item  the convolution
$$(\Psi\ast\Phi):t\mapsto
\int_{0}^{t}\Psi(t-s)\Phi(s)\,ds
$$ is well-defined almost surely;

\item there is $\bar{C}>0$, depending only on $\theta$, such that
\begin{align*} & \Big\n t\mapsto (\Psi \ast\Phi)(t)
\Big\n_{L^{p}(\Omega;C^{1-\theta}([0,T];Y_2))} \\
& \qquad \qquad  \leq  \bar{C} \n g \n_{L ^1 ([0,T];\RR ^+_0)} \lk\| \Phi\rk\|_{L^ {p}(\Omega;L^{\infty}([0,T];Y_1))};
\end{align*}
\item there is $\tilde{C}>0$, depending only on $\theta$ and $T$, such that
\begin{align*} & \Big\n t\mapsto (\Psi \ast\Phi)(t)
\Big\n_{L^{p}(\Omega;C([0,T];Y_2))} \\
& \qquad \qquad  \leq  \tilde{C} \n g \n_{L ^1 ([0,T];\RR ^+_0)} \lk\| \Phi\rk\|_{L^ {p}(\Omega;L^{\infty}([0,T];Y_1))}.
\end{align*}

\end{enumerate}
\end{lemma}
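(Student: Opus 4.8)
The plan is to extract two pointwise bounds from the hypothesis and then estimate the increments of the convolution directly and pathwise. Writing the assumed inequality as the two separate bounds
\begin{equation*}
\|\dot\Psi(t)x\|_{Y_2}\le t^{-\theta}g(t)\|x\|_{Y_1},\qquad \|\Psi(t)x\|_{Y_2}\le \tfrac1\theta\,t^{1-\theta}g(t)\|x\|_{Y_1},\quad x\in Y_1,\ t\in(0,T),
\end{equation*}
the second one immediately gives (a) and (c): since $1-\theta>0$, for a.e.\ $\omega$ and every $t\in[0,T]$,
\begin{equation*}
\int_0^t\|\Psi(t-s)\Phi(s)\|_{Y_2}\,ds\le \tfrac1\theta\int_0^t(t-s)^{1-\theta}g(t-s)\,ds\,\|\Phi\|_{L^\infty([0,T];Y_1)}\le \tfrac{T^{1-\theta}}{\theta}\|g\|_{L^1}\|\Phi\|_{L^\infty([0,T];Y_1)}.
\end{equation*}
Strong measurability of $s\mapsto\Psi(t-s)\Phi(s)$ follows from the strong continuity of $\Psi$ on $(0,t)$ and the measurability of $\Phi$, so the integral is a well-defined Bochner integral for a.e.\ $\omega$, which proves (a); taking the supremum over $t$ and then the $L^p(\Omega)$-norm proves (c) with $\tilde C=T^{1-\theta}/\theta$.

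For the Hölder estimate (b), I would fix $0\le\tau<t\le T$ and split
\begin{equation*}
(\Psi\ast\Phi)(t)-(\Psi\ast\Phi)(\tau)=\int_0^\tau[\Psi(t-s)-\Psi(\tau-s)]\Phi(s)\,ds+\int_\tau^t\Psi(t-s)\Phi(s)\,ds=:I_1+I_2.
\end{equation*}
For $I_2$ I substitute $u=t-s$ and use the $\Psi$-bound together with $u\le t-\tau$ to get $\|I_2\|_{Y_2}\le\frac1\theta(t-\tau)^{1-\theta}\|g\|_{L^1}\|\Phi\|_{L^\infty}$. For $I_1$, the idea is to write the difference by the fundamental theorem of calculus, $\Psi(t-s)x-\Psi(\tau-s)x=\int_{\tau-s}^{t-s}\dot\Psi(r)x\,dr$, apply the $\dot\Psi$-bound, and switch the order of integration:
\begin{equation*}
\|I_1\|_{Y_2}\le\|\Phi\|_{L^\infty}\int_0^\tau\!\!\int_{\tau-s}^{t-s}r^{-\theta}g(r)\,dr\,ds=\|\Phi\|_{L^\infty}\int_0^t r^{-\theta}g(r)M(r)\,dr,
\end{equation*}
where $M(r)=\min(\tau,t-r)-\max(0,\tau-r)$ is the length of the $s$-slice (set to $0$ when negative).

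The crux is then the elementary estimate $M(r)\le\min(r,\,t-\tau)$, which follows from $\max(0,\tau-r)\ge\tau-r$ combined with $\min(\tau,t-r)\le\tau$ and $\min(\tau,t-r)\le t-r$. From it one gets $r^{-\theta}M(r)\le(t-\tau)^{1-\theta}$ for every $r\in(0,t]$ (bounding $r^{-\theta}\cdot r$ when $r\le t-\tau$ and $r^{-\theta}(t-\tau)$ when $r>t-\tau$), so that $\|I_1\|_{Y_2}\le(t-\tau)^{1-\theta}\|g\|_{L^1}\|\Phi\|_{L^\infty}$. Combining the two pieces yields the pointwise increment bound with constant $\bar C=1+\tfrac1\theta$, depending only on $\theta$; dividing by $|t-\tau|^{1-\theta}$ and taking the supremum shows $\|\Psi\ast\Phi\|_{C^{1-\theta}([0,T];Y_2)}\le\bar C\|g\|_{L^1}\|\Phi\|_{L^\infty([0,T];Y_1)}$ pathwise, and raising to the $p$-th power and integrating over $\Omega$ gives (b).

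I expect the only genuine obstacle to be the double-integral bound for $I_1$: one must switch the order of integration with some care and then find the clean majorant $r^{-\theta}M(r)\le(t-\tau)^{1-\theta}$, which is exactly what decouples the singular weight $r^{-\theta}$ from the increment $t-\tau$; everything else is bookkeeping. A minor point to verify is the joint measurability required to pass from the pathwise estimates to the $L^p(\Omega)$-statements, but this is routine given the strong continuity of $\Psi$ and the measurability of $\Phi$.
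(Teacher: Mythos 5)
Your proof is correct, but it follows a genuinely different route from the paper: the paper does not prove the H\"older bound at all, it simply invokes an external result (Proposition 3.6 of Cox and Hausenblas, \emph{J. Evol. Equ.} 2013, the reference \texttt{sonja1}) for the pathwise estimate
$\|\Psi\ast\Phi\|_{C^{1-\theta}([0,T];Y_2)}\leq \bar C\,\|g\|_{L^1}\|\Phi\|_{L^\infty([0,T];Y_1)}$,
then obtains (b) by raising to the $p$th power and taking expectations, and deduces (c) from (b) using $(\Psi\ast\Phi)(0)=0$, which costs a factor $T^{1-\theta}$. You instead prove the pathwise estimate from scratch: the increment splitting into $I_1+I_2$, the fundamental theorem of calculus plus Tonelli for $I_1$, and the clean majorant $r^{-\theta}M(r)\leq (t-\tau)^{1-\theta}$ with $M(r)=\min(\tau,t-r)-\max(0,\tau-r)$ are exactly the kind of argument the cited proposition encapsulates, and your verification of $M(r)\leq\min(r,t-\tau)$ is the decisive step. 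What your version buys: the lemma becomes self-contained, with explicit constants $\bar C=1+\tfrac1\theta$ and $\tilde C=T^{1-\theta}/\theta$ (consistent with the dependencies claimed in the statement), and part (c) comes directly from the sup-norm bound rather than through (b). What the paper's version buys: brevity, and the delegation of the routine technical points (joint measurability, applicability of the fundamental theorem of calculus on subintervals of $(0,T)$) to the reference. Two small caveats in your write-up, both harmless: the FTC representation $\Psi(t-s)x-\Psi(\tau-s)x=\int_{\tau-s}^{t-s}\dot\Psi(r)x\,dr$ requires $[\tau-s,t-s]\subset(0,T)$, which holds for all $s\in(0,\tau)$ except possibly the endpoint cases $s=0,\,t=T$ and $s=\tau$, a set of measure zero in the $s$-integral; and, as you note, one needs $\omega\mapsto\|\Psi\ast\Phi\|_{C^{1-\theta}([0,T];Y_2)}$ to be measurable, which follows since the supremum defining the seminorm may be taken over rational pairs $(\tau,t)$ by continuity of the convolution.
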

\begin{proof}
Note first that, almost surely, the mapping $s\mapsto \Psi(t-s)\Phi(s)\in L^1([0,T]; Y_2)$ and hence $\Psi\ast\Phi$ is well defined almost surely.
In \cite[Proposition 3.6]{sonja1} it is shown, that under the assumptions of the theorem, almost surely, there is $\bar{C}>0$, depending only on $\theta$, such that
$$
\Big\n t\mapsto (\Psi \ast\Phi)(t)
\Big\n_{C^{1-\theta}([0,T];Y_2)}\leq \bar{C}\n g \n_{L ^1 ([0,T];\RR ^+_0)} \lk\| \Phi\rk\|_{L^{\infty}([0,T];Y_1)}.
$$
The estimate in (b) follows by taking the $p$th power and expected value of both sides of the inequality. Finally the estimate in (c) follows from the estimate in (b) by noting that $(\Psi\ast\Phi)(0)=0$.
\end{proof}

\begin{lemma}\label{lem:stochConv}
Let $Y_1$ and $Y_2$ be real separable Hilbert spaces.
Let $T>0$ and suppose that the process $\Phi:\Omega\times [0,T]\to L_{HS}(H,Y_1)$ is predictable and that $\Phi\in L^{p'}(\Omega;L^{\infty}([0,T];L_{HS}(H,Y_1)))$ for some $p'\in (2,\infty]$.
%
Let $\Psi:[0,T]\rightarrow \calL(Y_1,Y_2)$ be a mapping  such that the mapping $t\mapsto \Psi(t) x$ is
continuously differentiable on $(0,T)$ for all $x\in
Y_1$. Suppose, moreover, that there exists a function $g\in L ^1 ([0,T];\RR ^+_0)$ and a
constant $\theta\in (0,1)$ such that for all $t\in (0,T)$,
\begin{align}\label{ass:psie}
  t^\theta \n \dot\Psi(t) x\n_{Y_2} + \theta \, t^{\theta-1} \n  \Psi(t) x\n_{Y_2}  &
\leq g(t)\n x\n_{Y_1}, \quad \textrm{for all } x\in Y_1.
\end{align}
Then,
\begin{enumerate}[label=(\alph*)]
\item  the stochastic convolution process
$$(\Psi\diamond\Phi):t\mapsto
\int_{0}^{t}\Psi(t-s)\Phi(s)\,dW_H(s)
$$ is well-defined;

\item for any  $\gamma \in (0,\frac12-\theta-\frac 1{p'})$ with $\theta<\frac 12 -\frac 1{p'}$ there exists a modification of  $\Psi\diamond\Phi$ such that
\begin{align*}
& \Big\n t\mapsto (\Psi \diamond\Phi)(t)
\Big\n_{L^{p'}(\Omega;C^{\gamma}([0,T];Y_2))} \\
& \qquad \qquad  \leq  \bar{C} \n g \n_{L ^1 ([0,T];\RR ^+_0)} \lk\| \Phi\rk\|_{L^{p'}(\Omega;L^{\infty}(0,T;L_{HS}(H,Y_1)))},
\end{align*}
where $\bar{C}$ only depends on $\gamma,\theta,p'$ and $T$;
\item the modification of  $\Psi\diamond\Phi$ from (b) also satisfies
\begin{align*}
& \Big\n t\mapsto (\Psi \diamond\Phi)(t)
\Big\n_{L^{p'}(\Omega;C([0,T];Y_2))} \\
& \qquad \qquad  \leq  \tilde{C} \n g \n_{L ^1 ([0,T];\RR ^+_0)} \lk\| \Phi\rk\|_{L^{p'}(\Omega;L^{\infty}(0,T;L_{HS}(H,Y_1)))},
\end{align*}
where $\tilde{C}$ only depends on $\gamma,\theta,p'$ and $T$.
\end{enumerate}
\end{lemma}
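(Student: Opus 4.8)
The plan is to deduce the H\"older regularity from a moment bound on the time increments of $\Psi\diamond\Phi$ together with the Kolmogorov--Chentsov continuity theorem, in close analogy with the deterministic Lemma~\ref{prop:detHolder}. The two characteristic features are a gain of half a derivative, coming from the quadratic nature of the It\^o/Burkholder estimate, and a loss of $\frac1{p'}$, the price of Kolmogorov's theorem; this is exactly why the admissible H\"older exponent is $\gamma<\frac12-\theta-\frac1{p'}$ and why one must require $\theta<\frac12-\frac1{p'}$. For part~(a) I would first note that $s\mapsto\Psi(t-s)\Phi(s)$ is predictable, using the continuity of $s\mapsto\Psi(t-s)x$ and the predictability of $\Phi$, and then check stochastic integrability from the smoothing estimate \eqref{ass:psie}, which yields $\|\Psi(u)\|_{\calL(Y_1,Y_2)}\le\theta^{-1}u^{1-\theta}g(u)$, so that $\int_0^t\|\Psi(t-s)\Phi(s)\|_{L_{HS}(H,Y_2)}^2\,ds$ is finite almost surely and has a finite $\tfrac{p'}2$-moment.

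The core step is the increment estimate. For $0\le s<t\le T$ I would split, exactly as in Lemma~\ref{prop:detHolder},
\begin{align*}
(\Psi\diamond\Phi)(t)-(\Psi\diamond\Phi)(s)&=\int_s^t\Psi(t-r)\Phi(r)\,dW_H(r)\\
&\quad+\int_0^s\big[\Psi(t-r)-\Psi(s-r)\big]\Phi(r)\,dW_H(r)=:I_1+I_2,
\end{align*}
and apply the Burkholder--Davis--Gundy inequality in $L^{p'}(\Omega;Y_2)$ to each term, thereby reducing the problem to estimating the quadratic-variation integrals $\int_s^t\|\Psi(t-r)\Phi(r)\|_{L_{HS}(H,Y_2)}^2\,dr$ and $\int_0^s\|[\Psi(t-r)-\Psi(s-r)]\Phi(r)\|_{L_{HS}(H,Y_2)}^2\,dr$. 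For $I_2$ I would use $\Psi(t-r)-\Psi(s-r)=\int_{s-r}^{t-r}\dot\Psi(u)\,du$ together with the bound $\|\dot\Psi(u)\|_{\calL(Y_1,Y_2)}\le u^{-\theta}g(u)$ from \eqref{ass:psie}, and for $I_1$ the bound on $\|\Psi(u)\|_{\calL(Y_1,Y_2)}$; in both terms the factor $\|\Phi\|_{L^\infty([0,T];L_{HS}(H,Y_1))}$ is pulled out and estimated in $L^{p'}(\Omega)$.

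After the substitution $u=t-r$ the two $g$-weighted time integrals are split into the regimes $u\le t-s$ and $u>t-s$; on each piece the non-singular powers of $u$ are majorised by the corresponding power of $t-s$, just as in the deterministic proof, so that only an integral of the weight against a power of $u$ remains. One then arrives at a bound of the form $\E{\|I_j\|_{Y_2}^{p'}}\lesssim_{g,T}(t-s)^{(\frac12-\theta)p'}\,\|\Phi\|_{L^{p'}(\Omega;L^\infty([0,T];L_{HS}(H,Y_1)))}^{p'}$, and Kolmogorov--Chentsov produces a modification in $L^{p'}(\Omega;C^\gamma([0,T];Y_2))$ for every $\gamma<\frac12-\theta-\frac1{p'}$, which is part~(b). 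Part~(c) follows at once, because $(\Psi\diamond\Phi)(0)=0$ forces the supremum norm to be controlled by $T^\gamma$ times the H\"older seminorm, precisely as in Lemma~\ref{prop:detHolder}(c).

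The step I expect to be the main obstacle is that $\Psi$ carries no semigroup or evolution-system composition property, which rules out the usual Da~Prato--Kwapie\'n--Zabczyk factorization one would normally use to reduce a stochastic convolution to a deterministic one; this forces the direct route above. Within that route the genuine subtlety is the dependence on the weight: whereas in Lemma~\ref{prop:detHolder} the weight $g$ enters linearly (only the non-singular factor $u^{1-\theta}$ is extracted and $\int g=\|g\|_{L^1}$ survives), the quadratic Burkholder bound makes $g$ appear through $\int u^{2-2\theta}g(u)^2\,du$. Recovering the asserted dependence that is linear in $\|g\|_{L^1}$, rather than in a weighted $L^2$-type quantity, together with the sharp decay in $t-s$, is exactly where the two-regime splitting and the sub-critical conditions $\theta<\frac12-\frac1{p'}$ and $\gamma<\frac12-\theta-\frac1{p'}$ have to be exploited to the fullest.
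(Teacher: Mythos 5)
Your overall strategy (BDG on the two\-/piece increment decomposition, then Kolmogorov--Chentsov) can be made to work, but the proof as written has a genuine gap exactly at the step you yourself flag as the ``main subtlety'', and the mechanism you propose for closing it (splitting the time integrals into the regimes $u\le t-s$ and $u>t-s$, plus the subcriticality conditions) does not close it. Concretely: after pulling $\|\Phi\|_{L^\infty([0,T];L_{HS}(H,Y_1))}$ out of the quadratic variation, the bounds you plan to insert, $\|\Psi(u)\|_{\calL(Y_1,Y_2)}\le\theta^{-1}u^{1-\theta}g(u)$ and $\|\dot\Psi(u)\|_{\calL(Y_1,Y_2)}\le u^{-\theta}g(u)$, leave you with quantities such as $\int_0^{t-s} u^{2-2\theta}g(u)^2\,du$, i.e.\ with $g^2$. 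For $g$ merely in $L^1$ these can be infinite: take $g=\sum_k 2^k\,\mathbf{1}_{[c_k,\,c_k+4^{-k}]}$ with the spikes located in $[T/2,T]$; then $\|g\|_{L^1}<\infty$ while $\int u^{2-2\theta}g(u)^2\,du=\infty$. No splitting of the integration region repairs this, because the obstruction is the non\-/integrability of $g^2$, not the power singularity; and the same gap already invalidates your argument for part (a), since almost sure square integrability of $s\mapsto\Psi(t-s)\Phi(s)$ does not follow from the pointwise bound you invoke.

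What closes the gap is to integrate the hypothesis \emph{before} squaring. Inequality \eqref{ass:psie} says precisely that $\bigl\|\tfrac{d}{dr}\bigl[r^\theta\Psi(r)x\bigr]\bigr\|_{Y_2}\le g(r)\|x\|_{Y_1}$, and since $g\in L^1$ forces $\liminf_{r\to0}rg(r)=0$, one obtains $u^\theta\Psi(u)x=\int_0^u\tfrac{d}{dr}[r^\theta\Psi(r)x]\,dr$, hence
\begin{equation}
\|\Psi(u)\|_{\calL(Y_1,Y_2)}\le u^{-\theta}\|g\|_{L^1},\qquad
\|\Psi(b)x-\Psi(a)x\|_{Y_2}\le \|x\|_{Y_1}\int_a^b u^{-\theta}g(u)\,du .
\end{equation}
These bounds are linear in $g$ and carry only the square\-/integrable singularity $u^{-\theta}$ (here $\theta<\tfrac12$ is used). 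With them your computation goes through: for $I_1$ the quadratic variation is at most $\|g\|_{L^1}^2\int_0^{t-s}u^{-2\theta}\,du\lesssim\|g\|_{L^1}^2(t-s)^{1-2\theta}$; for $I_2$ apply Cauchy--Schwarz with respect to the measure $g(u)\,du$, namely $\bigl(\int_a^b u^{-\theta}g\,du\bigr)^2\le\|g\|_{L^1}\int_a^b u^{-2\theta}g\,du$, and then Fubini together with your two\-/regime splitting gives the same bound $\|g\|_{L^1}^2(t-s)^{1-2\theta}$; Kolmogorov then yields (b), and (c) follows from $(\Psi\diamond\Phi)(0)=0$. Two further remarks. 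First, your claim that the absence of a semigroup property ``rules out'' factorization\-/type arguments is not accurate: the identity above is exactly a factorization\-/type device and needs nothing beyond \eqref{ass:psie}. Second, for comparison, the paper's own proof performs no increment estimate at all: it verifies the weighted bound \eqref{eq:psipsip} and reduces (a) and (b) to Lemma 3.2 and Corollary 3.4 of \cite{sonja1}, obtaining (c) from (b) as above; your route, once repaired as indicated, is a self\-/contained alternative to that citation.
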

\begin{proof}
Let $0<\eta<\frac12$. Then
\begin{equation}\label{eq:psipsip}
\sup_{0\leq t\leq T}\|s\mapsto (t-s)^{-\eta}\Phi(s)\|_{L^{p'}(\Omega;L^{2}([0,T];L_{HS}(H,Y_1)))}\leq C_{\eta,T} \|\Phi\|_{L^{p'}(\Omega;L^{\infty}([0,T];L_{HS}(H,Y_1)))}.
\end{equation}
As $Y_1$ is a Hilbert space, the statement in (a) follows from \cite[Lemma 3.2]{sonja1} by noting that in this case $L^{2}([0,T];L_{HS}(H,Y_1))\simeq \gamma([0,T];H,Y_1)$, where the latter denotes the space of gamma radonifying operators \fahim{from} $L^2([0,T];H)\to Y_1$, see \cite[Section 2.2]{sonja1} for further details. Let $\gamma \in (0,\frac12-\theta-\frac 1{p'})$. Then, there is $0<\eta<\frac12$ such that $\gamma \in (0,\eta-\theta-\frac 1{p'})$. Then,
by \cite[Corollary 3.4]{sonja1}, there exists a modification of  $\Psi\diamond\Phi$ and a constant $C$ depending on $\gamma, \eta, p' $ such that
\begin{align*}
& \Big\n t\mapsto (\Psi \diamond\Phi)(t)
\Big\n_{L^{p'}(\Omega;C^{\gamma}([0,T];Y_2))} \\
& \qquad \qquad  \leq  C \n g \n_{L ^1 ([0,T];\RR ^+_0)} \sup_{0\leq t\leq T}\|s\mapsto (t-s)^{-\eta}\Phi(s)\|_{L^{p'}(\Omega;L^{2}([0,T];L_{HS}(H,Y_1)))}\\
&  \qquad \qquad  \leq  \bar{C}  \n g \n_{L ^1 ([0,T];\RR ^+_0)}  \|\Phi\|_{L^{p'}(\Omega;L^{\infty}([0,T];L_{HS}(H,Y_1)))},
\end{align*}
where $\bar{C}$ depends on $\gamma,p', T$ and $\eta$, with the latter ultimately depending on $\gamma,\theta$ and $p'$. We used \cite[Corollary 3.4]{sonja1} in the first inequality and \eqref{eq:psipsip} in the second. Finally the estimate in (c) follows from the estimate in (b) by noting that $(\Psi\diamond\Phi)(0)=0$.
\end{proof}

Next we state a basic existence and uniqueness result.
\begin{lemma}\label{lem:eu}
Let $p>2$ and let Assumption \ref{hypogeneral}, Assumption \ref{ass:semigroup}, and Assumption \ref{ass:semigroup:app} be satisfied with  $0<\gamma_1<\frac12 -\frac{1}{p}$ and $0<\gamma_2<1 $.
Then, equations \eqref{varcons} and \eqref{varconsn} have  unique $H_0$-predictable solutions $U$, respectively $U_n$, in $L^p(\Omega; L^\infty([0,T];H_0))$ with
\begin{equation}\label{eq:stability}
\begin{aligned}
&\|U\|_{L^p(\Omega; L^\infty([0,T];H_0))}\le C(1+\|X^0\|_{L^p(\Omega; L^\infty([0,T];H_0))}); \\
&\|U_n\|_{L^p(\Omega; L^\infty([0,T];H_0))}\le C(1+\|X^0_n\|_{L^p(\Omega; L^\infty([0,T];H_0))}) ,
\end{aligned}
\end{equation}
for some $C>0$ depending on $\gamma_1,\gamma_2,p$ and $T$.
\end{lemma}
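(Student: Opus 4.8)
The plan is to realise the solution as the unique fixed point of the affine map associated with the right-hand side of \eqref{varcons}, obtaining it first on a short time interval and then patching. I would carry out the proof for \eqref{varcons}; the argument for \eqref{varconsn} is verbatim the same after replacing $s_i$ by $s_i+Ch_i$ and invoking \eqref{eq:psin}--\eqref{eq:psinn} in place of Assumption \ref{ass:semigroup}.

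\emph{Setup and reconciliation of hypotheses.} First I would note that, since $\gamma_i\in(0,1)$, the bounds in Assumption \ref{ass:semigroup} imply the hypotheses of Lemma \ref{prop:detHolder} (for $S^2$, with $\theta=\gamma_2$ and $g=s_2$) and of Lemma \ref{lem:stochConv} (for $S^1$, with $\theta=\gamma_1$, $g=s_1$ and $p'=p$): indeed $\theta\,t^{\theta-1}\le t^{\theta-1}$, so the second summand appearing in the lemmas is dominated by the one in the assumptions. The standing restriction $\gamma_1<\tfrac12-\tfrac1p$ is precisely what permits the application of Lemma \ref{lem:stochConv} with $p'=p$ and some $\gamma>0$. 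For $\tau\in(0,T]$ let $\mathcal{X}_\tau$ be the Banach space of (equivalence classes of) $H_0$-predictable processes $V$ on $[0,\tau]$ with $\|V\|_{\mathcal{X}_\tau}:=\|V\|_{L^p(\Omega;L^\infty([0,\tau];H_0))}<\infty$, and let $\Lambda_\tau V$ denote the right-hand side of \eqref{varcons}. By the linear growth in Assumption \ref{hypogeneral}, the maps $s\mapsto F(s,V(s))$ and $s\mapsto G(s,V(s))$ are predictable and belong to $L^p(\Omega;L^\infty([0,\tau];H_2))$, resp. $L^p(\Omega;L^\infty([0,\tau];L_{HS}(H,H_1)))$; hence Lemma \ref{prop:detHolder}(c) and Lemma \ref{lem:stochConv}(c) (with $Y_2=H_0$) show that $\Lambda_\tau$ maps $\mathcal{X}_\tau$ into itself.

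\emph{Local contraction.} Using the Lipschitz bounds on $F$ and $G$ together with parts (c) of the two lemmas applied to $F(\cdot,U)-F(\cdot,V)$ and $G(\cdot,U)-G(\cdot,V)$ yields $\|\Lambda_\tau U-\Lambda_\tau V\|_{\mathcal{X}_\tau}\le\kappa(\tau)\|U-V\|_{\mathcal{X}_\tau}$. The decisive point is that $\kappa(\tau)\to0$ as $\tau\to0$: the constants $\tilde C$ in parts (c) of both lemmas arise from parts (b) and the vanishing of the convolutions at $t=0$, so they carry a factor $\tau^{1-\gamma_2}$, resp. $\tau^{\gamma}$, and in addition $\|s_i\|_{L^1([0,\tau])}\to0$ by absolute continuity of the integral. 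I would then fix $\tau^*$ with $\kappa(\tau^*)<1$, observing that $\tau^*$ depends only on $C_F,C_G,s_1,s_2,\gamma_1,\gamma_2,p,T$ and \emph{not} on the data $X^0$. The Banach fixed point theorem gives a unique fixed point of $\Lambda_{\tau^*}$ in $\mathcal{X}_{\tau^*}$, i.e. the unique solution of \eqref{varcons} on $[0,\tau^*]$, predictable by construction.

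\emph{Globalisation, stability, and the main obstacle.} This last step is where the absence of a semigroup/flow structure (a genuine memory effect) must be confronted: one cannot simply restart from a new initial state, and the entire past must be carried as a forcing term. On $[\tau^*,2\tau^*]$ I would absorb the known past by setting $\widehat F(s):=\mathbf{1}_{[0,\tau^*]}(s)F(s,U(s))$ and $\widehat G(s):=\mathbf{1}_{[0,\tau^*]}(s)G(s,U(s))$, extended by zero to $[0,2\tau^*]$. Since these vanish on $(\tau^*,2\tau^*]$, for $t\in[\tau^*,2\tau^*]$ one has $\int_0^{\tau^*}S^2(t-s)F(s,U(s))\,ds=(S^2\ast\widehat F)(t)$ and $\int_0^{\tau^*}S^1(t-s)G(s,U(s))\,dW_H(s)=(S^1\diamond\widehat G)(t)$, so that the past contributions are \emph{again} of convolution type and hence controlled by Lemma \ref{prop:detHolder}(c) and Lemma \ref{lem:stochConv}(c) (the zero-extension trick is what lets me invoke these lemmas, which are tailored to convolutions starting at $0$); together with $X^0$ they form a predictable datum $\widetilde X^0\in L^p(\Omega;L^\infty([\tau^*,2\tau^*];H_0))$. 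Equation \eqref{varcons} restricted to $[\tau^*,2\tau^*]$ then has exactly the structure treated in the previous step, with the same contraction constant $\kappa(\tau^*)$, so I may solve it and iterate over the $\lceil T/\tau^*\rceil$ subintervals to obtain the unique global solution $U\in\mathcal{X}_T$. The stability estimate \eqref{eq:stability} follows by accumulating, over these finitely many steps, the affine bound $\|\Lambda_{\tau^*}V\|_{\mathcal{X}_{\tau^*}}\le\tilde c\,(1+\|\text{datum}\|)$ coming from the linear growth of $F,G$ and parts (c) of the lemmas, the resulting constant depending only on $\gamma_1,\gamma_2,p,T$. I expect the genuine difficulty to lie precisely in this globalisation: verifying that the past contributions (notably the stochastic one) remain in the correct pathwise space, for which the uniform, data-independent smallness of $\tau^*$ is exactly what makes the finite iteration close.
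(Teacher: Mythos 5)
Your proposal is correct, but it takes a genuinely different route from the paper. The paper avoids any localization in time: it works directly on the whole interval $[0,T]$ with the exponentially weighted (Bielecki-type) norm $\|U\|_\lambda^p=\EE\esssup_{t\in[0,T]}e^{-\lambda p t}\|U(t)\|_{H_0}^p$, and makes the fixed-point map a contraction in one stroke by noting that the relevant factor $\int_0^T(2+\lambda t)e^{-\lambda t}\bigl(s_1(t)+s_2(t)\bigr)\,dt$ tends to $0$ as $\lambda\to\infty$ by dominated convergence; the weight is absorbed into the hypotheses of Lemmas \ref{prop:detHolder} and \ref{lem:stochConv} by applying them to $\Psi(t)=e^{-\lambda t}S^i(t)$ with $g(t)=(2+\lambda t)e^{-\lambda t}s_i(t)$, and the stability bound \eqref{eq:stability} then drops out in one line from $\|\mathcal{L}(U)\|_\lambda\le\|\mathcal{L}(U)-\mathcal{L}(0)\|_\lambda+\|\mathcal{L}(0)\|_\lambda$. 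You instead exploit smallness of $\|s_i\|_{L^1([0,\tau])}$ to get a local contraction with data-independent existence time $\tau^*$, and then globalize; your zero-extension trick is exactly the right device to cope with the memory term, since it recasts the past contributions as convolutions from time $0$ to which the two lemmas apply, and the finite iteration over $\lceil T/\tau^*\rceil$ subintervals yields both existence, uniqueness and the affine stability bound. What the paper's approach buys is economy: no restart problem to confront at all, and the same weighted-norm computation is recycled verbatim in the proof of Theorem \ref{thm:main} (the terms $I_1$ and $I_3$ there), which is why the paper sets it up this way. What your approach buys is a more elementary norm and an explicit, data-independent local existence time; its cost is the bookkeeping of the patching step, where you should (and essentially do) note two routine points: the constants of Lemmas \ref{prop:detHolder}(c) and \ref{lem:stochConv}(c) on $[0,\tau]$ are bounded uniformly in $\tau\le T$, and the contraction on the shifted interval requires the shifted Wiener process $\widetilde W_H(s)=W_H(s+\tau^*)-W_H(\tau^*)$ and filtration, under which the shifted integrand remains predictable.
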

\begin{proof}
The proof is fairly standard as it uses Banach's fixed point theorem and therefore we only sketch a proof. Let $T>0$ and set
$$
X_T:=\{U\in L^p(\Omega; L^\infty([0,T];H_0)):\,U\text{ is } H_0\text{-predictable}\}.
$$
For $\lambda>0$, later to be chosen appropriately,  we endow $X_T$ with the norm
$$
\|U\|^p_\lambda:=\EE \esssup_{t\in [0,T]} e^{-\lambda p t}\|U(t)\|_{H_0}^p.
$$
Note, the  latter definition is equivalent to the natural norm of $L^p(\Omega; L^\infty([0,T];H_0))$. For $ U\in X_T$ define the fixed point map
\begin{multline*}
\mathcal{L}(U)(t)=X^0(t)+\int_{0}^{t}S^2(t-s)F(s,U(s))\,ds+\int_{0}^{t}S^1(t-s)G(s,U(s))\,dW_H(s)
\\
=:X^0(t)+\mathcal{L}_2(U)(t)+\mathcal{L}_1(U)(t).
\end{multline*}
For $\gamma<\min(\frac 12 -\frac 1p-\gamma_1,1 -\gamma_2)$  and  for $i=1,2$, we have
\begin{equation}\label{eq:smoothing}
\begin{aligned}
&\qquad \left\|\mathcal{L}_i(U)(t)\right\|_{L^p(\Omega; L^\infty([0,T];H_0))}\\
&\leq C_T \left\|\mathcal{L}_i(U)(t)\right\|_{L^{p}(\Omega;C^{\gamma}([0,T];H_0))}\leq C \|s_i\|_{L ^1 ([0,T];\RR ^+_0)}\left(1+\|U\|_{L^p(\Omega; L^\infty([0,T];H_0)}\right).
\end{aligned}
\end{equation}
In the first inequality above we used the fact that $\mathcal{L}_i(U)(0)=0$. In the second inequality, for $i=1$, we used Lemma \ref{prop:detHolder} with $Y_1=H_2$, $Y_2=H_0$ and the linear growth of $F$ while, for $i=2$,
we used Lemma \ref{lem:stochConv} with $Y_1=H_1$, $Y_2=H_0$ and the linear growth of $G$.
Hence, the operator $\mathcal{L}$ is a mapping $X_T\to X_T$. Next we show that $\mathcal{L}$ is a contraction for $\lambda$ large enough. Indeed, if $U,V\in X_T$, then
\begin{align*}
&\left\|\mathcal{L}_2(U)(t)-\mathcal{L}_2(V)(t)\right\|^p_{\lambda} \\
&=
 \EE \esssup_{t\in [0,T]} \left\|\int_0^te^{-\lambda (t-s)}S^2(t-s)e^{-\lambda s}\left(F(s,U(s))-F(s,V(s))\right)\,ds\right\|_{H_0}^p \\
&  \leq C_T \left\|t\mapsto \int_0^te^{-\lambda (t-s)}S^2(t-s)e^{-\lambda s}\left(F(s,U(s))-F(s,V(s))\right)\,ds\right\|_{L^{p}(\Omega;C^{\gamma}([0,T];H_0))}^p\\
&\leq C_{p,T} \left(\int_0^ T (2+\lambda t) e^{-\lambda t}s_2(t)\, dt\right)^p\|U-V\|_{\lambda}^p,
\end{align*}
see the estimate of $I_1$ in the proof of Theorem \ref{thm:main} for more details. We similarly have that
$$
\left\|\mathcal{L}_1(U)(t)-\mathcal{L}_1(V)(t)\right\|^p_{\lambda} \leq C_{p,T} \left(\int_0^ T (2+\lambda t) e^{-\lambda t}s_1(t)\, dt\right)^p\|U-V\|_{\lambda}^p,
$$
see the estimate of $I_3$ in the proof of Theorem \ref{thm:main} for more details. Thus, by choosing $\lambda>0$ large enough,
we conclude that there is a constant $M\in(0,1)$ such that $\left\|\mathcal{L}(U)(t)-\mathcal{L}(V)(t)\right\|_{\lambda}\leq M\|U-V\|_\lambda$ when $\mathcal{L}:X_T\to X_T$ is a contraction. Therefore, by Banach's fixed point theorem, there exists a unique $U\in X_T$ with $U=\mathcal{L}(U)$. A similar argument shows, by defining the corresponding mapping $\mathcal{L}_n$ in an obvious way, the existence and uniqueness of $U_n$ with the terms $(2+\lambda t) e^{-\lambda t}s_i(t)$ above replaced by $(2+\lambda t) e^{-\lambda t}(s_i(t)+Ch_i(t))$ (c.f., \eqref{eq:psin} and \eqref{eq:psinn}) yielding a uniform in $n$ contraction constant $0<M'<1$ of $\mathcal{L}_n$. The estimate \eqref{eq:stability} follows from the following simple estimate
$$
\|U\|_{\lambda}=\|\mathcal{L}(U)\|_\lambda \leq \|\mathcal{L}(U)-\mathcal{L}(0)\|_\lambda+\|\mathcal{L}(0)\|_\lambda \leq M\|U\|_{\lambda}+\|X^0\|_\lambda+C_{p,T,\lambda},
$$
where $\lambda$ is large enough so that $0<M<1$ and similarly for $U_n$.
\end{proof}

\begin{remark}[Regularity]\label{rem:reg} Observe that under the assumptions of Lemma \ref{lem:eu}, if also $X_0,X_n^0\in L^p(\Omega;C^\gamma([0,T];H_0))$ holds for $\gamma<\min(\frac 12 -\frac 1p-\gamma_1,1 -\gamma_2)$, then
\begin{equation}\label{eq:stability1}
\begin{aligned}
&\|U\|_{ L^p(\Omega;C^\gamma([0,T];H_0))}\le C(1+\|X^0\|_{ L^p(\Omega;C^\gamma([0,T];H_0))}+\|X^0\|_{L^p(\Omega; L^\infty([0,T];H_0))}); \\
&\|U_n\|_{ L^p(\Omega;C^\gamma([0,T];H_0))}\le C(1+\|X^0_n\|_{ L^p(\Omega;C^\gamma([0,T];H_0))}+\|X^0_n\|_{L^p(\Omega; L^\infty([0,T];H_0))}).
\end{aligned}
\end{equation}
Indeed, we have that $U=X^0+\mathcal{L}_2(U)+\mathcal{L}_1(U)$ hence the claim for $U$ follows from the second inequality in \eqref{eq:smoothing} and \eqref{eq:stability}; with an analogous argument  showing the claim for $U_n$.
\end{remark}


Now we are ready to present our main result.

\begin{theorem}\label{thm:main}
Let $p>2$ and let  Assumption \ref{hypogeneral}, Assumption \ref{ass:semigroup}, and Assumption \ref{ass:semigroup:app}
 be satisfied with $0<\gamma_1<\frac12 -\frac{1}{p}$ and $0<\gamma_2<1$. Suppose that there exists a number $K>0$, such that
 $$
 \|X_n^0\|_{L^p(\Omega; L^\infty([0,T];H_0))}<K \quad \mbox{for all}\quad n\in \N.
 $$
Let
$$\Err(t) := U(t)-{U}^{(\h  )}(t)\quad \mbox{and}\quad \Err_0(t):= X_0(t)-X_0^n(t),\quad  t\in[0,T],
$$
 and let $0<\gamma<\min(\frac 12 -\frac 1p-\gamma_1,1 -\gamma_2)$.

Then there exists a constant $C>0$, depending on $\gamma,\gamma_1,\gamma_2$, $p$ and $T$,  such that for all $n\in\NN$ we have
\begin{equation}\label{eq:mee}
 \|e\|_{L^p(\Omega;C^\gamma([0,T];H_0))}\le C\lk( \|e_0\|_{L^p(\Omega;C^\gamma([0,T];H_0))} +\|e(0)\|_{L^p(\Omega;H_0)}+r_1(n)+r_2(n)\rk),
\end{equation}
where the rate functions $r_i$, $i=1,2$, are introduced in Assumption \ref{ass:semigroup:app}.
\end{theorem}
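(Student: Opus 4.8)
The plan is to subtract \eqref{varconsn} from \eqref{varcons} and insert the intermediate quantities $\int_0^t S_n^2(t-s)F(s,U(s))\,ds$ and $\int_0^t S_n^1(t-s)G(s,U(s))\,dW_H(s)$, writing $S^i-S^i_n=r_i(n)\Psi^i_n$. This decomposes the error as $e=e_0+I_1+I_2+I_3+I_4$, where
$$
I_1(t)=\int_0^t S_n^2(t-s)\big[F(s,U(s))-F(s,U_n(s))\big]\,ds,\quad I_3(t)=\int_0^t S_n^1(t-s)\big[G(s,U(s))-G(s,U_n(s))\big]\,dW_H(s)
$$
are the \emph{propagation} terms, while $I_2(t)=r_2(n)\int_0^t \Psi^2_n(t-s)F(s,U(s))\,ds$ and $I_4(t)=r_1(n)\int_0^t \Psi^1_n(t-s)G(s,U(s))\,dW_H(s)$ are the \emph{discretisation} terms carrying the rates.

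First I would bound the discretisation terms directly in $C^\gamma$. For $I_2$ I apply Lemma \ref{prop:detHolder} with $Y_1=H_2$, $Y_2=H_0$, $\theta=\gamma_2$, $g=h_2$ (the bound \eqref{ass:psi22} supplies the hypothesis, as $\gamma_2<1$), using the linear growth of $F$ and the stability bound \eqref{eq:stability} for $U$; since $\gamma<1-\gamma_2$ the embedding $C^{1-\gamma_2}\hookrightarrow C^\gamma$ gives $\|I_2\|_{L^p(\Omega;C^\gamma)}\le C r_2(n)$. For $I_4$ I apply Lemma \ref{lem:stochConv} with $Y_1=H_1$, $\theta=\gamma_1$, $g=h_1$, $p'=p$; the admissibility $\gamma\in(0,\tfrac12-\gamma_1-\tfrac1p)$ follows from the hypotheses, and linear growth of $G$ with \eqref{eq:stability} yields $\|I_4\|_{L^p(\Omega;C^\gamma)}\le C r_1(n)$. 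The same two lemmas applied with $\Psi=S^i_n$ and the uniform-in-$n$ bounds \eqref{eq:psin}--\eqref{eq:psinn} handle the propagation terms, and the crucial structural point is that both lemmas estimate the Hölder norm of a convolution by the $L^\infty$-in-time norm of its integrand; the Lipschitz continuity of $F$ and $G$ therefore gives $\|I_1\|_{L^p(\Omega;C^\gamma)}+\|I_3\|_{L^p(\Omega;C^\gamma)}\le C\|e\|_{L^p(\Omega;L^\infty([0,T];H_0))}$. Collecting these estimates yields
\begin{equation*}
\|e\|_{L^p(\Omega;C^\gamma([0,T];H_0))}\le \|e_0\|_{L^p(\Omega;C^\gamma([0,T];H_0))}+C\big(r_1(n)+r_2(n)\big)+C\|e\|_{L^p(\Omega;L^\infty([0,T];H_0))}.
\end{equation*}

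It remains to control $\|e\|_{L^p(\Omega;L^\infty)}$, and the key observation is that the previous step fed back only $\|e\|_{L^\infty}$ (not $\|e\|_{C^\gamma}$), so the two norms decouple. To bound $\|e\|_{L^\infty}$ I repeat the decomposition in the exponentially weighted norm $\|\cdot\|_\lambda$ of Lemma \ref{lem:eu}, replacing $S^i_n(\tau)$ by $e^{-\lambda\tau}S^i_n(\tau)$ and weighting the integrands by $e^{-\lambda s}$. Exactly as in the contraction step of Lemma \ref{lem:eu}, the weighted families satisfy the hypotheses of Lemmas \ref{prop:detHolder} and \ref{lem:stochConv} with $g_\lambda(t)=(2+\lambda t)e^{-\lambda t}(s_i(t)+Ch_i(t))$, so the propagation terms obey $\|I_1\|_\lambda+\|I_3\|_\lambda\le\kappa(\lambda)\|e\|_\lambda$ with $\kappa(\lambda)=C\int_0^T(2+\lambda t)e^{-\lambda t}(s_1+Ch_1+s_2+Ch_2)(t)\,dt$, while $\|I_2\|_\lambda+\|I_4\|_\lambda\le C(r_1(n)+r_2(n))$ and $\|e_0\|_\lambda\le\|e_0\|_{L^\infty}$. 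Since $(2+\lambda t)e^{-\lambda t}\le 2$ and tends to $0$ pointwise on $(0,T]$, dominated convergence gives $\kappa(\lambda)\to 0$, so I may fix $\lambda$ (uniformly in $n$, by \eqref{eq:psin}--\eqref{eq:psinn}) with $\kappa(\lambda)\le\tfrac12$ and absorb the propagation terms. Converting back via $\|e\|_{L^\infty}\le e^{\lambda T}\|e\|_\lambda$, and using $e_0(0)=e(0)$ together with $\|e_0\|_{L^\infty}\le\|e(0)\|_{L^p(\Omega;H_0)}+T^\gamma\|e_0\|_{C^\gamma}$, gives $\|e\|_{L^\infty}\le C(\|e(0)\|+\|e_0\|_{C^\gamma}+r_1(n)+r_2(n))$. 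Substituting this into the displayed inequality produces \eqref{eq:mee}.

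The main obstacle, and the feature the whole argument is built around, is precisely this coupling: the error enters the right-hand side through the propagation terms, while the natural device for absorbing it --- the exponentially weighted norm and a contraction argument --- is compatible only with the $L^\infty$-in-time norm and not with the Hölder seminorm, since the weight $e^{-\lambda t}$ does not interact well with $\|\cdot\|_{C^\gamma}$. The resolution is the two-norm structure above: the Hölder convolution estimates return only the $L^\infty$ norm of $e$, so one closes the $L^\infty$ estimate by the weighting trick and then reads off the $C^\gamma$ estimate in one further pass. A secondary technical point is to check that all constants, and in particular the choice of $\lambda$, are uniform in $n$, which is exactly what the uniform bounds \eqref{eq:psin}--\eqref{eq:psinn} and the uniform bound $K$ on $\|X_n^0\|$ guarantee.
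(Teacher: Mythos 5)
Your proposal is correct, but it is organized genuinely differently from the paper's proof, in two respects. First, the decomposition: the paper inserts the mixed terms $\int_0^t S^2(t-s)F(s,U_n(s))\,ds$ and $\int_0^t S^1(t-s)G(s,U_n(s))\,dW_H(s)$, so its propagation terms carry the \emph{exact} families $S^i$ while its error operators act on $F(\cdot,U_n)$, $G(\cdot,U_n)$; this is precisely why the paper invokes the uniform bound $K$ on $\|X_n^0\|_{L^p(\Omega;L^\infty([0,T];H_0))}$, to control $\|U_n\|_{L^p(\Omega;L^\infty([0,T];H_0))}$ uniformly in $n$. You instead put $S^i_n$ on the Lipschitz differences and let $\Psi^i_n$ act on $F(\cdot,U)$, $G(\cdot,U)$, so your constants involve only $\|U\|$ and the hypothesis on $K$ is never used; what you need instead is uniform-in-$n$ control of the kernels $S^i_n$, which \eqref{eq:psin}--\eqref{eq:psinn} provide. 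Second, the absorption: the paper runs the whole argument in the exponentially weighted H\"older seminorm $\|e_\lambda\|_{L^p(\Omega;C^\gamma([0,T];H_0))}$, closing the loop via the elementary bound $\esssup_{t}\|e_\lambda(t)\|_{H_0}\le T^\gamma\|e_\lambda\|_{C^\gamma([0,T];H_0)}+\|e(0)\|_{H_0}$ and absorbing the weighted H\"older norm for large $\lambda$ --- a step that requires the a priori finiteness $\|e\|_{L^p(\Omega;C^\gamma([0,T];H_0))}<\infty$ from Remark \ref{rem:reg}. Your two-pass scheme (an unweighted H\"older pass that feeds back only $\|e\|_{L^p(\Omega;L^\infty([0,T];H_0))}$, then a weighted sup-norm contraction to close that quantity) avoids this a priori regularity issue entirely, since the norm you absorb is finite by Lemma \ref{lem:eu}; the price is a second pass through the decomposition. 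Incidentally, your motivating remark that the weight $e^{-\lambda t}$ cannot be combined with the H\"older seminorm is contradicted by the paper's proof, which does exactly that; but this is only heuristic commentary and does not affect the validity of your argument. Your small technical claims also check out: $(2+u)e^{-u}$ is decreasing on $[0,\infty)$, so $(2+\lambda t)e^{-\lambda t}\le 2$, the dominated convergence step is sound, and the choice of $\lambda$ is uniform in $n$ because the dominating functions $s_i+Ch_i$ are.
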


\begin{proof}
Let  $t \in [0,T]$ and fix a parameter $\lambda>0$. Then
\DEQS
\label{pert.split}
\lqq{ e_{\lambda}(t):=e^{-\lambda t}\Err(t) :=  e^{-\lambda t}(U(t) - U_n(t)) \phantom{\Big|}}
\\
& =&    e^{-\lambda t}X_0(t)-  e^{-\lambda t}X_0^n(t)\\
&&\qquad \quad {} +
\int_0^{t}  e^{-\lambda (t-s)}{S}^2  (t-s)  e^{-\lambda s}[F(s,U(s))-F(s,U_n(s))]\,ds
\\
&& \qquad\quad{} +  \int_0^{t}e^{-\lambda (t-s)}[S^2(t-s)-S^2_n  (t-s)]e^{-\lambda s}F(s, U_n(s))\,ds
\\
& &\qquad\quad {}+  \int_0^{t}e^{-\lambda (t-s)}{S}^1  (t-s)e^{-\lambda s}[G(s,U(s))-G(s,U_n(s))]\,dW_H(s)
\\
& &\qquad\quad {}+
\int_0^{t}e^{-\lambda (t-s)}[S^1(t-s)-S^1_\h  (t-s)]e^{-\lambda s}G(s, U_n(s))\,dW_H(s)
\\
 & :=& e^{\lambda}_0(t)+ I_1(t)+I_2(t)+I_3(t)+I_4(t). \phantom{\Big|}
\EEQS
Note that, by Remark \ref{rem:reg}, we have that
$$\|e\|_{L^p(\Omega;C^\gamma([0,T];H_0))}<\infty
$$
and hence also
$$\|e_\lambda\|_{L^p(\Omega;C^\gamma([0,T];H_0))}<\infty.
$$
In the following we estimate term by term.
\paragraph{\bf Estimate of $I_1$:}
To estimate $I_1$, we will use
 Lemma \ref{prop:detHolder} by setting $Y_1=H_2$, $Y_2=H_0$,
$$[0,T]\ni t\mapsto \Psi(t) := e^{-\lambda t}S^2(t),
$$
and $\Phi(t):= e^{-\lambda t}[F(t,U(t))-F(t,U_n(t))]$.  We have that $\Phi\in L^ {p}(\Omega;L^{\infty}([0,T];H_2))$ since $F$ is Lipschitz continuous and $U,U_n\in L^ {p}(\Omega;L^{\infty}([0,T];H_0))$ by Lemma \ref{lem:eu}.
Due to Assumption \ref{ass:semigroup}-(2), we know that the assumptions of Lemma \ref{prop:detHolder} are satisfied with $g(t)=(2+\lambda t )e^{-\lambda t}s_2(t)$ and $\theta=\gamma_2$. Indeed,
we have that
$$
\dot \Psi(t)x=-\lambda e^{-\lambda t}S^2(t)x+e^{-\lambda t}\dot S^2(t)x
$$
Therefore, as $\gamma_2<1$,
\begin{align}
&t^{\gamma_2} \n \dot \Psi(t) x\n_{H_0} + \gamma_2 t^{\gamma_2-1} \n \Psi(t)x\n_{H_0}\leq  t^{\gamma_2} \n \dot \Psi(t) x\n_{H_0} +  t^{\gamma_2-1} \n \Psi(t)x\n_{H_0}\\
&\quad \leq \lambda e^{-\lambda t} t^{\gamma_2}\|S^2(t)x\|_{H_0}+e^{-\lambda t} t^{\gamma_2}\|\dot S^2(t)x\|_{H_0}+  e^{-\lambda t} t^{\gamma_2-1}\|S^2(t)x\|_{H_0}\\
&\quad \leq \lambda t e^{-\lambda t} s_2(t)\|x\|_{H_2}+e^{-\lambda t} s_2(t)\|x\|_{H_2} +  e^{-\lambda t} s_2(t)\|x\|_{H_2}
=(2+\lambda t)e^{-\lambda t}s_2(t)\|x\|_{H_2}.
\end{align}
Thus, we can infer \fahim{by Lemma \ref{prop:detHolder}}
\begin{multline}
\|I_1\|_{L^p(\Omega;C^\gamma([0,T];H_0))} \\
 \le C \int_0^ T (2+\lambda t) e^{-\lambda t}s_2(t)\, dt \lk( \EE\esssup_{t\in [0,T]}e^{-\lambda p t}\|F(t,U(t))-F(t,U_n(t))\| ^{p}_{H_2} \rk) ^ \frac 1 {p}.
\end{multline}
The Lipschitz continuity of $F$ then gives
\begin{equation}\label{eq:I1}
\begin{aligned}
&\|I_1\|_{L^p(\Omega;C^\gamma([0,T];H_0))}\\
 &\quad \leq C\int_0^ T (2+\lambda t) e^{-\lambda t}s_2(t)\, dt\, \lk(\EE \esssup_{t\in [0,T]} \lk(e^{-\lambda p t}\|e(t)\|^p _{H_0}\rk)\rk)^{\frac 1 p}\\
 &=\quad C\int_0^ T (2+\lambda t) e^{-\lambda t}s_2(t)\, dt \,\lk(\EE \lk(\esssup_{t\in [0,T]} \|e_{\lambda}(t)\| _{H_0}\rk)^p\rk)^{\frac 1 p}\\
 &\quad \leq C\int_0^ T (2+\lambda t) e^{-\lambda t}s_2(t)\, dt\, \lk(\EE \lk(\esssup_{t\in [0,T]}\lk(t^{\gamma}\left\|\frac{e_\lambda(t)-e_{\lambda}(0)}{t^\gamma}\right\| _{H_0}+\|e(0)\|_{H_0}\rk)\rk)^{p}\rk)^{\frac 1p}\\
 & \quad \leq C \int_0^ T (2+\lambda t) e^{-\lambda t}s_2(t)\, dt \lk(T^\gamma \|e_\lambda\|_{L^p(\Omega;C^\gamma([0,T];H_0))}+\|e(0)\|_{L^p(\Omega;H_0)}\rk).
\end{aligned}
\end{equation}
\paragraph{\bf Estimate of $I_2$:}
To estimate $I_2$ we will use again Lemma \ref{prop:detHolder} by setting $Y_1=H_2$, $Y_2=H_0$,
$$[0,T]\ni t\mapsto \Psi(t) :=\frac{1}{r_2(n)}
 e^{-\lambda t}\lk[ S^2(t)-S^2_\h(t)\rk]= e^{-\lambda t}\Psi_n^2(t),
$$
and $\Phi(t):= e^{-\lambda t}F(t,U_n(t))$.
We have that $\Phi\in L^ {p}(\Omega;L^{\infty}([0,T];H_2))$ since $F$ is of linear growth in the second variable, uniformly in $t\in [0,T]$, and since we also have that $U_n\in L^ {p}(\Omega;L^{\infty}([0,T];H_0))$ by Lemma \ref{lem:eu}. Assumption \ref{ass:semigroup:app}-(2)  implies that the assumptions of Lemma \ref{prop:detHolder} are satisfied for all $n\in \N$ with  $g(t)=(2+\lambda t )e^{-\lambda t}h_2(t)$ and $\theta=\gamma_2$ as a similar calculation as in the case of $I_1$ shows. Therefore, we can infer
 from Lemma  \ref{prop:detHolder}
 that for all $\h\in\NN$,
\begin{align*}
&\frac{1}{r_2(n)}\|I_2\|_{L^p(\Omega;C^\gamma([0,T];H_0))} \\
&\qquad\le C \int_0^ T (2+\lambda t )e^{-\lambda t}h_2(t)\, dt \lk( \EE\esssup_{t\in [0,T]}e^{-\lambda p t} \|F(t, U_n(t))\|_{H_2}^ {p}\rk) ^ \frac 1 {p}\\
&\qquad\le C \int_0^ T h_2(t)\, dt \lk( \EE\esssup_{t\in [0,T]} \|F(t, U_n(t))\|_{H_2}^ {p} \rk) ^ \frac 1 {p}.
\end{align*}
The linear growth of $F$ gives for all $\h\in\NN$
$$
\frac{1}{r_2(n)}\|I_2\|_{L^p(\Omega;C^\gamma([0,T];H_0))}
\le C \int_0^ T h_2(t)\, dt \lk( 1+\EE \esssup_{t\in [0,T]}\|U_n(t)\|_{H_0}^ {p} \rk) ^ \frac 1 {p}.
$$
\paragraph{\bf Estimate of $I_3$:}
Here, we will apply  Lemma \ref{lem:stochConv} with
$Y_1=H_1$, $Y_2=H_0$,
$$[0,T]\ni t\mapsto \Psi(t) := e^{-\lambda t}S^1(t),
$$
and $\Phi(t):= e^{-\lambda t}[G(t,U(t))-G(t,U_n(t))]$, $t\in[0,T]$. We have that the process $\Phi$ is predictable as  $G$ is Lipschitz continuous and $U,U_n$ are predictable by Lemma \ref{lem:eu} and also that $\Phi\in L^{p}(\Omega;L^{\infty}([0,T];L_{HS}(H,H_1)))$  as $U,U_n\in L^ {p}(\Omega;L^{\infty}([0,T];H_0))$ again by Lemma \ref{lem:eu}.
Due to Assumption \ref{ass:semigroup}-(1) the assumptions of Lemma  \ref{lem:stochConv}, with $g(t)=(2+\lambda t )e^{-\lambda t}s_1(t)$ and $\theta=\gamma_1$ are satisfied, as a calculation similar to that in the case of $I_1$ shows.
Hence, it follows from  Lemma  \ref{lem:stochConv}, that
\begin{multline*}
\|I_3\|_{L^p(\Omega;C^\gamma([0,T];H_0))} \\
\le C \int_0^ T (2+\lambda t )e^{-\lambda t}s_1(t)\, dt \lk( \EE\esssup_{t\in [0,T]} e^{-\lambda p t}\|G(t,U(t))-G(t,U_n(t))\|_{L_{HS} (H, H_1)}^ {p} \rk) ^ \frac 1 {p}.
\end{multline*}
The Lipschitz continuity of $G$ then gives
\DEQS
\|I_3\|_{L^p(\Omega;C^\gamma([0,T];H_0))}  \le C \int_0^ T (2+\lambda t )e^{-\lambda t}s_1(t)\, dt\, \lk( \EE\esssup_{t\in [0,T]} e^{-\lambda p t}\|e(t)\|^ {p}_{H_0} \rk) ^ \frac 1 {p}.
\EEQS
By the same calculation used for estimating $I_1$ in \eqref{eq:I1}, where $s_2$ is replaced by $s_1$, we get
\begin{multline*}
\|I_3\|_{L^p(\Omega;C^\gamma([0,T];H_0))}  \\
\le C \int_0^ T (2+\lambda t) e^{-\lambda t}s_1(t)\, dt \lk(T^\gamma \|e_\lambda\|_{L^p(\Omega;C^\gamma([0,T];H_0))}+\|e(0)\|_{L^p(\Omega;H_0)}\rk).
\end{multline*}

\paragraph{\bf Estimate of $I_4$:}
To estimate $I_4$ again we will use Lemma \ref{lem:stochConv} by setting $Y_1=H_1$, $Y_2=H_0$,
$$[0,T]\ni t\mapsto \Psi(t) :=\frac{1}{r_1(n)}e^{-\lambda t}
 \lk[ S^1(t)-S^1_\h(t)\rk]=e^{-\lambda t}\Psi_n^1(t),
$$
and $\Phi(t):= e^{-\lambda t}G(t,U_n(t))$. The process $\Phi$ is predictable as $G$ is Lipschitz continuous and $U_n$ is predictable  by Lemma \ref{lem:eu} and $\Phi\in L^{p}(\Omega;L^{\infty}([0,T];L_{HS}(H,H_1)))$ as  $G$ is of linear growth in the second variable, uniformly in $[0,T]$,  and $U_n\in L^ {p}(\Omega;L^{\infty}([0,T];H_0))$ by Lemma \ref{lem:eu}.
Assumption \ref{ass:semigroup:app}-(1) implies that the assumption of Lemma \ref{lem:stochConv} are satisfied for all $n\in \N$ with $g(t)=(2+\lambda t )e^{-\lambda t}h_1(t)$ and $\theta=\gamma_1$, as a calculation similar to that in the case of $I_1$ shows.
Hence, we can infer that  for all $\h\in\NN$,
\begin{align*}
&\frac{1}{r_1(n)}\|I_4\|_{L^p(\Omega;C^\gamma([0,T];H_0))} \\
&\qquad\le C \int_0^ T(2+\lambda t )e^{-\lambda t}h_1(t)\, dt
\lk( \EE\esssup_{t\in [0,T]} e^{-\lambda p t}\|G(t,U_n(t))\|_{L_{HS} (H, H_1)}^ { p} \rk) ^ \frac1{p}\\
&\qquad\le C \int_0^ Th_1(t)\, dt
\lk( \EE\esssup_{t\in [0,T]} \|G(t,U_n(t))\|_{L_{HS} (H, H_1)}^ { p}\rk) ^ \frac1{p}.
\end{align*}
The linear growth of $G$ then gives,  for all $\h\in\NN$,
$$
\frac{1}{r_1(n)}\|I_4\|_{L^p(\Omega;C^\gamma([0,T];H_0))}
\le C \int_0^ T h_1(t)\, dt \lk(1+ \EE\esssup_{t\in [0,T]} \|U_n(t)\|_{H_0}^ { p}  \rk) ^ \frac 1 {p}.
$$

\medskip
\noindent
In this way
we get
\fahimm{
\begin{equation}\label{eq:el1}
\begin{aligned}
 &\|e_\lambda\|_{L^p(\Omega;C^\gamma([0,T];H_0))}\le  \|e^\lambda_0\|_{L^p(\Omega;C^\gamma([0,T];H_0)) }\\
 &\quad+C\lk(r_1(n)+r_2(n)\rk)\lk(1+\|U_n\|_{L^p(\Omega; L^\infty([0,T];H_0))}\rk)\\
 &\quad+C \int_0^ T (2+\lambda t) e^{-\lambda t}(s_1(t)+s_2(t))\, dt\,\Big(\|e_\lambda\|_{L^p(\Omega;C^\gamma([0,T];H_0))}+\|e(0)\|_{L^p(\Omega;H_0)}\Big) .
\end{aligned}
\end{equation}}
Note that, by Lemma \ref{lem:eu} and by our assumption $\|X_n^0\|_{L^p(\Omega; L^\infty([0,T];H_0))}<K$ for all $n\in \N$, there exists a constant $C>0$ such that
$$
\|U_n\|_{L^p(\Omega; L^\infty([0,T];H_0))}\le C\text{ for all } n\in \N.
$$
Furthermore, using Lebesgue's Dominated Convergence Theorem, it follows that
$$
\int_0^ T (2+\lambda t) e^{-\lambda t}(s_1(t)+s_2(t))\, dt \to 0 \text{ as }\lambda \to \infty,
$$
and, hence, with $\lambda>0$ large enough, we may absorb the last term on the right hand side of \eqref{eq:el1} into the left hand side to conclude that
\begin{equation}\label{eq:el2}
 \|e_\lambda\|_{L^p(\Omega;C^\gamma([0,T];H_0))}\le C\lk( \|e^{\lambda}_0\|_{L^p(\Omega;C^\gamma([0,T];H_0)) }+ \|e(0)\|_{L^p(\Omega;H_0)}+r_1(n)+r_2(n)\rk).
 \end{equation}
Remembering that $e_\lambda(0)= e(0)$ a straightforward  calculation shows that
\begin{equation*}
 \|e\|_{L^p(\Omega;C^\gamma([0,T];H_0))}
 \le C(\lambda,T)\lk( \|e_\lambda\|_{L^p(\Omega;C^\gamma([0,T];H_0))}+\|e(0)\|_{L^p(\Omega:H_0)}\rk).
\end{equation*}
A similar calculation implies that, remembering that $e_0(0)=e(0)$,
\begin{equation*}
 \|e^{\lambda}_0\|_{L^p(\Omega;C^\gamma([0,T];H_0))}
 \le C(\lambda,T)\lk( \|e_0\|_{L^p(\Omega;C^\gamma([0,T];H_0))}+\|e(0)\|_{L^p(\Omega:H_0)}\rk),
\end{equation*}
and the proof is complete in view of \eqref{eq:el2}.
\end{proof}
We end this section with the special important case $\gamma=0$.
\begin{corollary}\label{cor:main}
Let $p>2$ and let Assumptions \ref{hypogeneral} -- \ref{ass:semigroup:app} be satisfied with $0<\gamma_1<\frac12 -\frac{1}{p}$ and $0<\gamma_2<1$. Suppose that, there exists $K>0$, such that $\|X_n^0\|_{L^p(\Omega; L^\infty([0,T];H_0))}<K$ for all $n\in \N$.
Let $\Err(t) := U(t)-{U}^{(\h  )}(t)$ and $\Err_0(t):= X_0(t)-X_0^n(t)$, $t\in[0,T]$.
Then there exists a constant $C>0$, depending on $\gamma_1,\gamma_2$, $p$ and $T$,  such that, for all $n\in\NN$,
$$
 \|e\|_{L^p(\Omega;C([0,T];H_0))}\le C\lk( \|e_0\|_{L^p(\Omega;C([0,T];H_0))} +r_1(n)+r_2(n)\rk),
$$
where the rate functions $r_i$, $i=1,2$, are introduced in Assumption \ref{ass:semigroup:app}.
\end{corollary}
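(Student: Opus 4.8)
The plan is to repeat the proof of Theorem \ref{thm:main} almost verbatim, replacing every appeal to part (b) of Lemma \ref{prop:detHolder} and of Lemma \ref{lem:stochConv} by an appeal to their respective part (c). The point is that part (c) bounds each convolution directly in the supremum norm, and the supremum norm of the weighted error $e_\lambda(t):=e^{-\lambda t}e(t)$ already controls the inputs $\Phi$ appearing in those lemmas: indeed $\|\Phi\|_{L^\infty([0,T];H_2)}\le C_F\,\|e_\lambda\|_{C([0,T];H_0)}$ for the deterministic convolution and the analogous bound with $C_G$ for the stochastic one. Consequently there is no need to split $e_\lambda(t)$ into $e_\lambda(t)-e_\lambda(0)$ plus $e(0)$, as was done for the H\"older seminorm, and this is precisely why the term $\|e(0)\|_{L^p(\Omega;H_0)}$ of \eqref{eq:mee} disappears here; whatever remains of the initial datum is $e(0)=e_0(0)$, which is already dominated by $\|e_0\|_{L^p(\Omega;C([0,T];H_0))}$.

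Concretely, I would start from the same decomposition $e_\lambda=e_0^\lambda+I_1+I_2+I_3+I_4$ used in the proof of Theorem \ref{thm:main}. For $I_1$ and $I_3$ I would keep the identical choices $\Psi(t)=e^{-\lambda t}S^2(t)$ (resp.\ $e^{-\lambda t}S^1(t)$), $\theta=\gamma_2$ (resp.\ $\theta=\gamma_1$, which is admissible because $\gamma_1<\tfrac12-\tfrac1p$, so that Lemma \ref{lem:stochConv} applies with $p'=p$) and $g(t)=(2+\lambda t)e^{-\lambda t}s_i(t)$, and apply part (c) together with the Lipschitz continuity of $F$ and $G$ to obtain
\begin{equation*}
\|I_1\|_{L^p(\Omega;C([0,T];H_0))}+\|I_3\|_{L^p(\Omega;C([0,T];H_0))}\le C\int_0^T(2+\lambda t)e^{-\lambda t}(s_1(t)+s_2(t))\,dt\;\|e_\lambda\|_{L^p(\Omega;C([0,T];H_0))}.
\end{equation*}
For $I_2$ and $I_4$ I would likewise reuse the choices $\Psi(t)=e^{-\lambda t}\Psi_n^2(t)$ and $\Psi(t)=e^{-\lambda t}\Psi_n^1(t)$ carrying the factors $r_2(n)$ and $r_1(n)$, apply part (c) with $g(t)=(2+\lambda t)e^{-\lambda t}h_i(t)$, and use the linear growth of $F$ and $G$ together with the uniform bound $\|U_n\|_{L^p(\Omega;L^\infty([0,T];H_0))}\le C$ coming from Lemma \ref{lem:eu} and the hypothesis $\|X_n^0\|_{L^p(\Omega;L^\infty([0,T];H_0))}<K$. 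Since $(2+\lambda t)e^{-\lambda t}\le 2$, this gives $\|I_2\|+\|I_4\|\le C(r_1(n)+r_2(n))$ in the relevant norm.

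Collecting these estimates yields
\begin{equation*}
\|e_\lambda\|_{L^p(\Omega;C([0,T];H_0))}\le\|e_0^\lambda\|_{L^p(\Omega;C([0,T];H_0))}+C(r_1(n)+r_2(n))+C\int_0^T(2+\lambda t)e^{-\lambda t}(s_1(t)+s_2(t))\,dt\;\|e_\lambda\|_{L^p(\Omega;C([0,T];H_0))}.
\end{equation*}
Exactly as in the proof of Theorem \ref{thm:main}, Lebesgue's dominated convergence theorem shows that the integral tends to $0$ as $\lambda\to\infty$, so for $\lambda$ large enough the last term is absorbed into the left-hand side. Finally, using $e^{-\lambda T}\|f\|_{C([0,T];H_0)}\le\|e^{-\lambda\cdot}f\|_{C([0,T];H_0)}\le\|f\|_{C([0,T];H_0)}$ I would pass back from the weighted norms to the unweighted ones (up to a constant depending on $\lambda$ and $T$) and bound $\|e_0^\lambda\|_{L^p(\Omega;C([0,T];H_0))}\le\|e_0\|_{L^p(\Omega;C([0,T];H_0))}$, which completes the proof. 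The only genuinely new observation, and the step I would flag as the crux, is the one made in the first paragraph: in the supremum norm the initial value is automatically subsumed, so no separate $\|e(0)\|$ term is produced; everything else is a faithful transcription of the earlier argument with (b) replaced by (c).
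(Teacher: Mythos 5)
Your proposal is correct and follows essentially the same route as the paper: the paper's proof of Corollary \ref{cor:main} likewise reruns the decomposition $e_\lambda=e^\lambda_0+I_1+I_2+I_3+I_4$ from Theorem \ref{thm:main} with item (c) of Lemmas \ref{prop:detHolder} and \ref{lem:stochConv} in place of item (b), arrives at the analogue of \eqref{eq:el1} without the $\|e(0)\|_{L^p(\Omega;H_0)}$ term, and then absorbs the $\lambda$-integral term and passes back to unweighted norms exactly as before. Your "crux" observation — that in the supremum norm the Lipschitz estimates feed directly into $\|e_\lambda\|_{C([0,T];H_0)}$ so no splitting off of $e(0)$ is needed, either in the convolution estimates or in the weighted-to-unweighted transfer — is precisely the (implicit) reason the paper's stated bound \eqref{eq:ell1} carries no initial-value term.
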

\begin{proof}
In a completely analogous fashion and using the same notation as in the proof of Theorem \ref{thm:main} using this time specifically item (c) from Lemma \ref{prop:detHolder} and \ref{lem:stochConv} one concludes that
\begin{equation}\label{eq:ell1}
\begin{aligned}
 &\|e_\lambda\|_{L^p(\Omega;C([0,T];H_0))}\le  \|e^\lambda_0\|_{L^p(\Omega;C([0,T];H_0)) }\\
 &\quad+C\lk(r_1(n)+r_2(n)\rk)\lk(1+\|U_n\|_{L^p(\Omega; L^\infty([0,T];H_0))}\rk)\\
 &\quad+C \int_0^ T (2+\lambda t) e^{-\lambda t}(s_1(t)+s_2(t))\, dt\,\|e_\lambda\|_{L^p(\Omega;C([0,T];H_0))} .
\end{aligned}
\end{equation}
Now the proof can be completed the same way as that of Theorem \ref{thm:main}.
\end{proof}

\section{Applications}\label{app}

In this section we give two typical instances where our abstract results are applicable.

\subsection{Spectral Galerkin approximations of a class of abstract stochastic integral equations}\label{sg_one}
Let $A:D(A)\subset H_0\to H_0$ be an unbounded, densely defined, self-adjoint, positive definite operator with compact inverse. Let $\lambda_n$ denote the eigenvalues of $A$, arranged in a non-decreasing order, with corresponding orthonormal eigenbasis $(e_n)\subset H_0$.
For $\alpha\in (0,2)$, $\beta>1/2$ and $\kappa>0$ we consider the integral equation introduced in \cite{DL} given by
\begin{multline}\label{eq:UUUU}
U(t)=S^{\alpha,1}(t)u_0+S^{\alpha,2}(t)u_1\\+\int_0^t S^{\alpha,\kappa}(t-s)F(U(s))\,d s+\int_0^tS^{\alpha,\beta}(t-s)G(U(s))\,d W_H(s)
\end{multline}
where the Laplace transform $\widehat{S^{\alpha,\beta}}(z)x:=\int_0^\infty e^{-zt}S^{\alpha,\beta}(t)x\,dt$, $x\in H_0$, $\Re z >0$, of $S^{\alpha,\beta}$ is given by 
\begin{equation}\label{eq:shatab}
\widehat{S^{\alpha,\beta}}(z)x=z^{\alpha-\beta}(z^{\alpha}+A)^{-1}x
\end{equation}
and $u_0,u_1\in L^p(\Omega;H_0)$ for some $p> 2$ are $\mathcal{F}_0$-measurable. To connect $S^{\alpha,\beta}$ to the convolution kernels $t\mapsto \frac{1}{\Gamma(\alpha)}t^{\alpha-1}$ and  $t\mapsto \frac{1}{\Gamma(\beta)}t^{\beta-1}$  we note that
it is shown in \cite[Lemma 5.4]{DL} that for $\alpha\in (0,2)$, $\beta>0$, and $x\in H_0$ one has
\begin{equation}\label{eq:sabb}
S^{\alpha,\beta}(t)x=A\int_0^t \frac{(t-s)^{\alpha-1}}{\Gamma(\alpha)} S^{\alpha,\beta }(s)x\,ds+ \frac{1}{\Gamma(\beta)}t^{\beta-1}x.
\end{equation}
Next we provide smoothing estimates for $S^{\alpha,\beta}$ and its derivative.
\begin{lemma}\label{lem:sm}
For $\xi\in [0,1]$, $\alpha\in (0,2)$ and $\beta>0$ the estimates
\begin{align}
\|A^\xi S^{\alpha,\beta}(t)\|_{\mathcal{L}(H_0)}\leq Mt^{\beta-\alpha\xi-1},\,t>0,\label{eq:sabe}\\
\|A^\xi \dot{S}^{\alpha,\beta}(t)\|_{\mathcal{L}(H_0)}\leq Mt^{\beta-\alpha\xi-2},\,t>0,\label{eq:sabe1}
\end{align}
hold for some $M=M(\alpha,\beta,\xi)$.
\end{lemma}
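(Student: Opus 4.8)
The plan is to diagonalize everything with respect to the spectral decomposition of $A$ and thereby reduce both estimates to a single scalar bound on a Mittag--Leffler function. Since $A$ is self-adjoint with orthonormal eigenbasis $(e_n)$ and eigenvalues $\lambda_n>0$, the family $S^{\alpha,\beta}(t)$ is itself a function of $A$: writing $s_\lambda$ for the scalar function whose Laplace transform is $z^{\alpha-\beta}(z^\alpha+\lambda)^{-1}$, one has $S^{\alpha,\beta}(t)x=\sum_n s_{\lambda_n}(t)(x,e_n)_{H_0}e_n$ by uniqueness of the (vector-valued) Laplace transform together with \eqref{eq:shatab} and the spectral calculus. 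Consequently $A^\xi S^{\alpha,\beta}(t)$ is again a function of $A$, with eigenvalues $\lambda_n^\xi s_{\lambda_n}(t)$, so that
$$
\|A^\xi S^{\alpha,\beta}(t)\|_{\mathcal{L}(H_0)}=\sup_n\lambda_n^\xi|s_{\lambda_n}(t)|\le\sup_{\lambda>0}\lambda^\xi|s_\lambda(t)|,
$$
and analogously for $\dot S^{\alpha,\beta}$. This reduces the lemma to two scalar estimates.

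Next I would identify $s_\lambda$ explicitly. The standard identity $\int_0^\infty e^{-zt}t^{\beta-1}E_{\alpha,\beta}(-\lambda t^\alpha)\,dt=z^{\alpha-\beta}(z^\alpha+\lambda)^{-1}$ (valid for $\Re z>\lambda^{1/\alpha}$, then extended by analyticity), where $E_{\alpha,\beta}(w)=\sum_{k\ge0}w^k/\Gamma(\alpha k+\beta)$ is the two-parameter Mittag--Leffler function, gives $s_\lambda(t)=t^{\beta-1}E_{\alpha,\beta}(-\lambda t^\alpha)$. The key rescaling is the substitution $\mu:=\lambda t^\alpha$, under which $\lambda^\xi|s_\lambda(t)|=t^{\beta-\alpha\xi-1}\,\mu^\xi|E_{\alpha,\beta}(-\mu)|$. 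Hence \eqref{eq:sabe} follows once I show $\sup_{\mu>0}\mu^\xi|E_{\alpha,\beta}(-\mu)|<\infty$ for $\xi\in[0,1]$. For the derivative, term-by-term differentiation of the power series (legitimate by local uniform convergence on $(0,\infty)$) yields $\frac{d}{dt}\big[t^{\beta-1}E_{\alpha,\beta}(-\lambda t^\alpha)\big]=t^{\beta-2}E_{\alpha,\beta-1}(-\lambda t^\alpha)$, so that $\dot s_\lambda(t)=t^{\beta-2}E_{\alpha,\beta-1}(-\lambda t^\alpha)$; the same substitution gives $\lambda^\xi|\dot s_\lambda(t)|=t^{\beta-\alpha\xi-2}\,\mu^\xi|E_{\alpha,\beta-1}(-\mu)|$, so \eqref{eq:sabe1} reduces to the identical type of bound, now with second parameter $\beta-1$ in place of $\beta$ (note $\beta-1>-1$, which is harmless).

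Both scalar bounds rest on the classical decay estimate for Mittag--Leffler functions: for $0<\alpha<2$ and any real second parameter $\gamma$ there is a constant $C_{\alpha,\gamma}$ with $|E_{\alpha,\gamma}(-\mu)|\le C_{\alpha,\gamma}/(1+\mu)$ for all $\mu\ge0$. Granting this, $\mu^\xi|E_{\alpha,\gamma}(-\mu)|\le C_{\alpha,\gamma}\,\mu^\xi/(1+\mu)$ is bounded on $(0,\infty)$ precisely because $0\le\xi\le1$ (the quotient stays bounded both as $\mu\to0$ and, since $\xi\le1$, as $\mu\to\infty$), which delivers a constant $M=M(\alpha,\beta,\xi)$.

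I expect the main obstacle to be this decay estimate and, more importantly, the careful tracking of the dependence of $C_{\alpha,\gamma}$ on $\alpha$. It originates from the sectorial asymptotics of $E_{\alpha,\gamma}$, which require fixing an angle in $\bigl(\tfrac{\pi\alpha}{2},\min(\pi,\pi\alpha)\bigr)$; this admissible window closes as $\alpha\to2$, which is exactly the source of the constant blow-up as $\alpha\to2$ flagged in the introduction. A self-contained alternative, avoiding Mittag--Leffler asymptotics altogether, is to represent $s_\lambda$ by Laplace inversion along a Hankel contour $\Gamma$, perform the rescaling $z=w/t$ to obtain $s_\lambda(t)=\frac{t^{\beta-1}}{2\pi i}\int_{\Gamma}e^{w}\,w^{\alpha-\beta}(w^\alpha+\mu)^{-1}\,dw$ with $\mu=\lambda t^\alpha$, and then estimate this integral directly using the bound on $|w^\alpha+\mu|^{-1}$ along $\Gamma$; this reproduces the same $1/(1+\mu)$ decay and the same dependence of the constant on $\alpha$.
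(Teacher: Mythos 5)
Your argument is correct, but it follows a genuinely different route from the paper's. The paper stays at the operator level: it quotes from \cite[Lemma 5.4]{DL} the contour representation $S^{\alpha,\beta}(t)x=\frac{1}{2\pi i}\int_{\Gamma_{\rho,\phi}}e^{zt}z^{\alpha-\beta}(z^{\alpha}+A)^{-1}x\,dz$, commutes $A^{\xi}$ into the integral via closedness of $A^\xi$, and bounds the resulting integral by $Ct^{\beta-\alpha\xi-1}\|x\|_{H_0}$ using an estimate already contained in the proof of that lemma; for \eqref{eq:sabe1} it invokes the analyticity of $t\mapsto S^{\alpha,\beta}(t)x$ (again from \cite{DL}) to differentiate under the integral, the extra factor $z$ producing the extra power $t^{-1}$. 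You instead diagonalize by the spectral theorem and reduce both estimates to the scalar bounds $\sup_{\mu>0}\mu^{\xi}|E_{\alpha,\beta}(-\mu)|<\infty$ and $\sup_{\mu>0}\mu^{\xi}|E_{\alpha,\beta-1}(-\mu)|<\infty$, which follow from the classical Mittag--Leffler decay estimate $|E_{\alpha,\gamma}(-\mu)|\leq C_{\alpha,\gamma}/(1+\mu)$ for $0<\alpha<2$ and arbitrary real $\gamma$; your identifications $s_{\lambda}(t)=t^{\beta-1}E_{\alpha,\beta}(-\lambda t^{\alpha})$ and $\dot{s}_{\lambda}(t)=t^{\beta-2}E_{\alpha,\beta-1}(-\lambda t^{\alpha})$ are standard and correct, and the rescaling $\mu=\lambda t^{\alpha}$ delivers exactly the stated powers of $t$. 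What your approach buys: it is elementary modulo the classical ML bound, it makes the hypotheses transparent ($\xi\in[0,1]$ is precisely boundedness of $\mu^{\xi}/(1+\mu)$, and the blow-up of $M$ as $\alpha\to2$ is the closing of the sector in which the ML asymptotics are valid), and it recovers the eigenfunction representation \eqref{num_2} that the paper uses later in its numerical experiments. What it costs: it hinges on $A$ being self-adjoint with compact resolvent, whereas the paper's contour argument needs only sectorial resolvent estimates and thus would survive for non-self-adjoint $A$; indeed, the Hankel-contour alternative you sketch at the end is essentially the scalar shadow of the paper's proof. One step you should spell out: the interchange of $\frac{d}{dt}$ with the spectral sum, i.e.\ $\dot{S}^{\alpha,\beta}(t)x=\sum_{n}\dot{s}_{\lambda_{n}}(t)(x,e_{n})_{H_0}e_{n}$. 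This is not automatic from differentiating each scalar coefficient, but it follows from your own bound with $\xi=0$: since $|\dot{s}_{\lambda_{n}}(t)|\leq Ct^{\beta-2}$ uniformly in $n$, the differentiated series converges in $H_0$ uniformly on compact subsets of $(0,\infty)$, so term-by-term differentiation is justified; as a by-product this also furnishes the strong differentiability of $S^{\alpha,\beta}$ on $(0,\infty)$, which the paper obtains instead from the analytic extension in \cite{DL}.
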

\begin{proof}
It is shown  in \cite[Lemma 5.4]{DL} that for all $x\in H_0$,
$$
S^{\alpha,\beta}(t)x=\frac{1}{2\pi i}\int_{\Gamma_{\rho,\phi}}e^{zt}z^{\alpha-\beta}(z^\alpha+A)^{-1}x\,dz,
$$
where the contour is given by
$$
\Gamma_{\rho,\phi}(s)=
\begin{cases}
(s-\phi+\rho)e^{i\phi}&\text{ for }s>\phi,\\
\rho e^{is} &\text{ for } s\in (-\phi,\phi),\\
(-s-\phi+\rho)e^{-i\phi} &\text{ for }s<-\phi,
\end{cases}
$$
where $\rho>0$, $\phi>\frac{\pi}{2}$ and $\alpha \phi<\pi$. Therefore, as $A^{\xi}$ is a closed operator, one has that $S^{\alpha,\beta}(t)x\in \mathcal{D}(A^\xi)$ and
\begin{equation}\label{eq:axirep}
A^\xi S^{\alpha,\beta}(t)x=\frac{1}{2\pi i}\int_{\Gamma_{\rho,\phi}}e^{zt}z^{\alpha-\beta}A^{\xi}(z^\alpha+A)^{-1}x\,dz
\end{equation}
provided that
$$
\int_{\Gamma_{\rho,\phi}}\left|e^{zt}z^{\alpha-\beta}\right|\left\|A^\xi(z^\alpha+A)^{-1}x\right\|_{H_0}\,|dz|<\infty.
$$
It is shown in the proof of \cite[Lemma 5.4]{DL} that
\begin{equation}\label{eq:axies}
\int_{\Gamma_{\rho,\phi}}\left|e^{zt}z^{\alpha-\beta}\right|\left\|A^\xi(z^\alpha+A)^{-1}x\right\|_{H_0}\,|dz|\leq
Ct^{\beta-\alpha\xi-1}\|x\|_{H_0}\int_{\Gamma_{1, \phi}} e^{\Re z} |z|^{-\beta+\alpha \xi}\,|dz|,
\end{equation}
and hence \eqref{eq:axirep} holds and
$$
\|A^\xi S^{\alpha,\beta}(t)\|_{\mathcal{L}(H_0)}\leq C t^{\beta-\alpha\xi-1},\,t>0.
$$
To show \eqref{eq:sabe1} note that, by \cite[Lemma 5.4]{DL} the function $t\to S^{\alpha,\beta}(t)x$ can be extended analytically to a sector in the right half-plane for all $x\in H_0$. In particular, the function $t\to S^{\alpha,\beta}(t)x$ is differentiable for $t>0$.
Hence, we have
\begin{equation}\label{eq:axidotrep}
A^{\xi} \dot{S}^{\alpha,\beta}(t)x=\frac{1}{2\pi i}\int_{\Gamma_{\rho,\phi}}ze^{zt}z^{\alpha-\beta}A^{\xi}(z^\alpha+A)^{-1}x\,dz,
\end{equation}
provided that for every $t>0$ there is $\epsilon>0$ and $K=K(t,\epsilon)>0$ such that
$$
\int_{\Gamma_{\rho,\phi}}\left|ze^{zt}z^{\alpha-\beta}\right|\left\|A^{\xi}(z^\alpha+A)^{-1}x\right\|_{H_0}\,|dz| <K
$$
for $t\in (t_0-\epsilon,t_0+\epsilon)$. In a completely analogous fashion as in the case of estimate \eqref{eq:axies}, we get
\begin{multline*}
\int_{\Gamma_{\rho,\phi}}\left|ze^{zt}z^{\alpha-\beta}\right|\left\|A^{\xi}(z^\alpha+A)^{-1}x\right\|_{H_0}\,|dz| \\
\leq Ct^{\beta-\alpha\xi-2}\|x\|_{H_0}\int_{\Gamma_{1, \phi}} e^{\Re z} |z|^{-\beta+\alpha \xi+1}\,|dz|,
\end{multline*}
and thus \eqref{eq:axidotrep} holds and
$$
\|A^\xi \dot{S}^{\alpha,\beta}(t)\|_{\mathcal{L}(H_0)}\leq Mt^{\beta-\alpha\xi-2},\,t>0,
$$
which finishes the proof.
\end{proof}
\begin{remark}
We would like to point out two crucial points concerning \eqref{eq:sabe} and \eqref{eq:sabe1}. First, unless $\alpha=\beta=1$, which correspond the heat equation, the estimates do not hold for $\xi>1$. Furthermore, the constant $M$ in the estimates blows up as $\alpha\to 2$ as in this case we must have $\phi\to \pi/2$ and hence we integrate on a path with infinite line segments approaching the imaginary axis.
\end{remark}
\begin{ass}\label{hypogeneral1}
We assume that
\begin{enumerate}
\item[(a)] 
there exists $0\leq \delta_F\leq 1$ such that $F: H_0\to H_{-\delta_F}^A $ is Lipschitz continuous. In particular, there exists a constant $C>0$ with
$$
\| A^{-\delta_F}(F(x)-F(y))\|_{H_0}\le C\, \|x-y\|_{H_0},\quad x,y\in H_0.
$$
\item[(b)]
There exists $0\leq \delta_G\leq 1 $ such that
$G: H_0\to L_{HS}(H,H_{-\delta_G}^A )$ is Lipschitz continuous.
In particular, there exists a constant $C>0$ with
$$
\| A^{-\delta_G}(G(x)-G(y))\|_{L_{HS}(H_0,H_0)}\le C\, \|x-y\|_{H_0},\quad x,y\in H_0\fahim{.}
$$
\end{enumerate}
\end{ass}
Thus, the spaces $H_1$ and $H_2$ become
$$
H_1:=H_{-\delta_G}^A \quad \mbox{and}\quad H_2:=H_{-\delta_F}^A.
$$
Note next that $A^\xi$ commutes with $S^{\alpha,\beta}(t)$ for all $t\geq 0$ and all $\xi\in [-1,0]$ using an inversion formula for the Laplace transform (see, for example, \cite[Chapter 2.4]{ABHN01}) as this property clearly holds for $\widehat{S^{\alpha,\beta}}(z)$, $\Re z>0$, and $A^\xi\in \mathcal{L}(H_0)$ is this case.
Then, for $t>0$, \eqref{eq:sabe} and \eqref{eq:sabe1} show that
\begin{equation}\label{eq:ex1est1}
\begin{aligned}
&\|S^{\alpha,\beta}(t)x\|_{H_0}\leq Mt^{\beta-\alpha\delta_G-1}\|x\|_{H_1},\quad \|\dot{S}^{\alpha,\beta}(t)x\|_{H_0}\leq Mt^{\beta-\alpha\delta_G-2}\|x\|_{H_1};\\
&\|S^{\alpha,\kappa}(t)x\|_{H_0}\leq Mt^{\kappa-\alpha\delta_F-1}\|x\|_{H_2},\quad  \|\dot{S}^{\alpha,\kappa}(t)x\|_{H_0}\leq  Mt^{\kappa-\alpha\delta_F-2}\|x\|_{H_2}.
\end{aligned}
\end{equation}
We also define the approximating operators by
$$
S^{\alpha,\beta}_n(t):=S^{\alpha,\beta}(t)\mathcal{P}_n=\mathcal{P}_nS^{\alpha,\beta}(t),
$$
where $\mathcal{P}_n$ is defined in \eqref{PN}, and the approximating process by
\begin{equation}\label{eq:uuuun}
U_n(t)=S_n^{\alpha,1}(t)u_0+S_n^{\alpha,2}(t)u_1+\int_0^t S_n^{\alpha,\kappa}(t-s)F(U_n(s))\,d s+\int_0^tS_n^{\alpha,\beta}(t-s)G(U_n(s))\,d s.
\end{equation}
For $\nu\geq 0$, let
\begin{equation}\label{eq:psi111n}
\Psi^1_n(t):=\lambda_{n+1}^\nu(S^{\alpha,\beta}(t)-S_n^{\alpha,\beta}(t))=\lambda_{n+1}^\nu(I-\mathcal{P}_n)S^{\alpha,\beta}(t)
\end{equation}
and
\begin{equation}\label{eq:psi222n}
\Psi^2_n(t):=\lambda_{n+1}^\nu(S^{\alpha,\kappa}(t)-S_n^{\alpha,\kappa}(t))=\lambda_{n+1}^\nu(I-\mathcal{P}_n)S^{\alpha,\kappa}(t).
\end{equation}
For $t>0$, we then have for $\delta_G+\nu\leq 1$ and $\delta_F+\nu\leq 1$,
\begin{equation}\label{eq:ex1est2}
\begin{aligned}
&\|\Psi^1_n(t)x\|_{H_0}\leq C \|A^\nu S^{\alpha,\beta}(t)x\|_{H_0} \leq  Ct^{\beta-\alpha(\delta_G+\nu)-1}\|x\|_{H_1};\\
&\|\dot{\Psi}^1_n(t)x\|_{H_0}\leq C \|A^\nu \dot{S}^{\alpha,\beta}(t)x\|_{H_0} \leq  C t^{\beta-\alpha(\delta_G+\nu)-2}\|x\|_{H_1};\\
&\|\Psi^2_n(t)x\|_{H_0}\leq C\|A^\nu S^{\alpha,\kappa}(t)x\|_{H_0} \leq  C t^{\kappa-\alpha(\delta_F+\nu)-1}\|x\|_{H_2};\\
&\|\dot{\Psi}^2_n(t)x\|_{H_0}\leq C \|A^\nu \dot{S}^{\alpha,\kappa}(t)x\|_{H_0} \leq  C t^{\kappa-\alpha(\delta_F+\nu)-2}\|x\|_{H_2}.
\end{aligned}
\end{equation}
\begin{theorem}\label{thm:example1}
Let $U$ and $\{U_n:n\in\NN\}$ given by \eqref{eq:UUUU} and \eqref{eq:uuuun}, respectively. Let $\Err(t) := U(t)-U_n(t)$ and $e_0(t):=S^{\alpha,1}(t)u_0-S^{\alpha,1}_n(t)u_0+S^{\alpha,2}(t)u_1-S^{\alpha,2}_n(t)u_1.$
\begin{itemize}
\item[(a)]Let $p>2$, $0< \gamma_1<\frac12-\frac{1}{p}$ and $0< \gamma_2<1$ and suppose that $\min(\gamma_1+\beta-\alpha \delta_G-1,\gamma_2+\kappa-\alpha \delta_F-1)>0$.
Let $\gamma<\min(\frac12-\frac{1}{p}-\gamma_1,1-\gamma_2)$, $\nu_1<\frac{\gamma_1+\beta -\alpha\delta_G-1}{\alpha}$  and $\nu_2<\frac{\gamma_2+\kappa -\alpha\delta_F-1}{\alpha}$. If $\delta_G+\nu_1\leq 1$ and $\delta_F+\nu_2\leq 1$, then the error estimate
\begin{multline}\label{eq:sgest}
 \|e\|_{L^p(\Omega;C^\gamma([0,T];H_0))}\leq C(T,p, \nu_1,\nu_2,\gamma)\big( \|e_0\|_{L^p(\Omega;C^\gamma([0,T];H_0))} \\+\|e(0)\|_{L^p(\Omega;H_0)}+\lambda_{n+1}^{-\nu_1}+\lambda_{n+1}^{-\nu_2}\big)
\end{multline}
holds. Set $\nu:=\min(\nu_1,\nu_2)$ and suppose that $\nu+\frac{\gamma}{\alpha}\leq 1$. If $u_0\in L^p(\Omega;\D(A^{\nu+\frac{\gamma}{\alpha}}))$ and $u_1\in L^p(\Omega;\D(A^{\max(0,\nu+\frac{\gamma-1}{\alpha})}))$, then
$$
 \|e\|_{L^p(\Omega;C^\gamma([0,T];H_0))}\leq C(T,p, \nu_1,\nu_2,\gamma, u_0,u_1)\lambda_{n+1}^{-\nu}.
$$
\item[(b)] Let $p>2$ and suppose that $\min(\frac12-\frac{1}{p}+\beta-\alpha \delta_G-1,\kappa-\alpha \delta_F)>0$. For $\nu_1<\frac{\frac12-\frac{1}{p}+\beta -\alpha\delta_G-1}{\alpha}$  and $\nu_2<\frac{\kappa -\alpha\delta_F}{\alpha}$, if $\delta_G+\nu_1\leq 1$ and $\delta_F+\nu_2\leq 1$, then the error estimate
\begin{equation}\label{eq:sgest2}
 \|e\|_{L^p(\Omega;C([0,T];H_0))}\leq C(T,p, \nu_1,\nu_2)\big( \|e_0\|_{L^p(\Omega;C([0,T];H_0))} +\lambda_{n+1}^{-\nu_1}+\lambda_{n+1}^{-\nu_2}\big)
\end{equation}
holds. Setting $\nu:=\min(\nu_1,\nu_2)$, and assuming that $u_0\in L^p(\Omega;\D(A^{\nu}))$ and $u_1\in L^p(\Omega;\D(A^{\max(0,\nu-\frac{1}{\alpha})}))$ the error estimate
$$
 \|e\|_{L^p(\Omega;C([0,T];H_0))}\leq C(T,p, \nu_1,\nu_2, u_0,u_1)\lambda_{n+1}^{-\nu}
$$
holds.
\end{itemize}
\end{theorem}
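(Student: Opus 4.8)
The plan is to obtain both parts as concrete instances of Theorem \ref{thm:main} and Corollary \ref{cor:main}, the only real work being the verification of Assumptions \ref{hypogeneral}--\ref{ass:semigroup:app} in this setting and a separate, direct estimate of the initial-data contributions $e_0$ and $e(0)$. One reads off $S^2=S^{\alpha,\kappa}$, $S^1=S^{\alpha,\beta}$, $X^0(t)=S^{\alpha,1}(t)u_0+S^{\alpha,2}(t)u_1$, $H_1=H_{-\delta_G}^A$ and $H_2=H_{-\delta_F}^A$. First I would check Assumption \ref{hypogeneral}: its Lipschitz and linear-growth conditions on $F$ and $G$ are exactly Assumption \ref{hypogeneral1}, while $X^0\in L^p(\Omega;L^\infty([0,T];H_0))$ follows from Lemma \ref{lem:sm} with $\xi=0$, which gives $\|S^{\alpha,1}(t)u_0\|_{H_0}\le M\|u_0\|_{H_0}$ and $\|S^{\alpha,2}(t)u_1\|_{H_0}\le MT\|u_1\|_{H_0}$ on $[0,T]$; since $\mathcal{P}_n$ is a contraction the same bound holds for $X_n^0$, giving the required uniform bound $\|X_n^0\|_{L^p(\Omega;L^\infty([0,T];H_0))}<K$.

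Next I would verify the operator assumptions directly from the smoothing estimates \eqref{eq:ex1est1} and \eqref{eq:ex1est2}. Adding the two halves of \eqref{eq:ex1est1} shows Assumption \ref{ass:semigroup} holds with $s_1(t)=2Mt^{\gamma_1+\beta-\alpha\delta_G-2}$ and $s_2(t)=2Mt^{\gamma_2+\kappa-\alpha\delta_F-2}$, and these belong to $L^1([0,T])$ precisely when $\min(\gamma_1+\beta-\alpha\delta_G-1,\gamma_2+\kappa-\alpha\delta_F-1)>0$. Similarly, with $r_1(n)=\lambda_{n+1}^{-\nu_1}$ and $r_2(n)=\lambda_{n+1}^{-\nu_2}$, the bounds \eqref{eq:ex1est2} (legitimate because $\delta_G+\nu_1\le1$ and $\delta_F+\nu_2\le1$ keep the relevant powers within the range of Lemma \ref{lem:sm}) give Assumption \ref{ass:semigroup:app} with $h_1(t)=2Ct^{\gamma_1+\beta-\alpha(\delta_G+\nu_1)-2}$ and $h_2(t)=2Ct^{\gamma_2+\kappa-\alpha(\delta_F+\nu_2)-2}$, whose $L^1$-integrability is exactly the ranges $\nu_1<\frac{\gamma_1+\beta-\alpha\delta_G-1}{\alpha}$ and $\nu_2<\frac{\gamma_2+\kappa-\alpha\delta_F-1}{\alpha}$. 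The same type of smoothing estimate (now applied to $S^{\alpha,1}(\cdot)u_0$ and $S^{\alpha,2}(\cdot)u_1$ themselves, with $\mathcal{P}_n$ a contraction) also shows $X^0,X_n^0\in L^p(\Omega;C^\gamma([0,T];H_0))$, so that Remark \ref{rem:reg} and hence Theorem \ref{thm:main} apply, yielding \eqref{eq:sgest} at once.

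It remains, for the sharpened estimate in (a), to bound $\|e(0)\|_{L^p(\Omega;H_0)}$ and $\|e_0\|_{L^p(\Omega;C^\gamma([0,T];H_0))}$ by $C\lambda_{n+1}^{-\nu}$. From \eqref{eq:sabb} one reads off $S^{\alpha,1}(0)=I$ and $S^{\alpha,2}(0)=0$, so $e(0)=(I-\mathcal{P}_n)u_0$ and \eqref{PN:est} gives $\|e(0)\|_{H_0}\le\lambda_{n+1}^{-(\nu+\gamma/\alpha)}\|A^{\nu+\gamma/\alpha}u_0\|_{H_0}\le\lambda_{n+1}^{-\nu}\|A^{\mu}u_0\|_{H_0}$ with $\mu:=\nu+\gamma/\alpha\le1$. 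For $e_0(t)=(I-\mathcal{P}_n)S^{\alpha,1}(t)u_0+(I-\mathcal{P}_n)S^{\alpha,2}(t)u_1$ I would, in each summand, split off $(I-\mathcal{P}_n)A^{-\nu}$, which has norm $\lambda_{n+1}^{-\nu}$ by \eqref{PN:est}, and commute $A^\nu$ through the resolvent family. The supremum is then controlled by Lemma \ref{lem:sm}: for the $u_0$-summand $\|A^\nu S^{\alpha,1}(t)u_0\|_{H_0}=\|S^{\alpha,1}(t)A^\nu u_0\|_{H_0}\le M\|A^\nu u_0\|_{H_0}$, while for the $u_1$-summand the additional time-smoothing of $S^{\alpha,2}$ retains a factor $t^\gamma$, so that only $u_1\in\D(A^{\max(0,\nu+(\gamma-1)/\alpha)})$ is needed.

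The Hölder seminorm of $e_0$ is the crux. Here I would write $S^{\alpha,j}(t)-S^{\alpha,j}(s)=\int_s^t\dot S^{\alpha,j}(r)\,dr$ and aim to bound the integrand by $r^{\gamma-1}$, since $\int_s^t r^{\gamma-1}\,dr\le\gamma^{-1}(t-s)^\gamma$ for $0\le s\le t$. For the $u_1$-summand this succeeds with a nonnegative power of $A$ on the family, the $\max(0,\cdot)$ in the hypothesis precisely separating the two sub-cases and keeping the exponent in $[0,1]$. The genuine obstacle is the $u_0$-summand: because $\beta=1$, \eqref{eq:sabe1} reads $\|A^\xi\dot S^{\alpha,1}(r)\|_{\mathcal{L}(H_0)}\le Mr^{-\alpha\xi-1}$, an exponent that is $\le-1$ for every $\xi\ge0$, so the integral-of-the-derivative argument closes only through a negative power, namely $\|A^{-\gamma/\alpha}\dot S^{\alpha,1}(r)\|_{\mathcal{L}(H_0)}\le Mr^{\gamma-1}$, used via $A^\nu\dot S^{\alpha,1}(r)u_0=A^{-\gamma/\alpha}\dot S^{\alpha,1}(r)A^{\nu+\gamma/\alpha}u_0$ and bringing in $u_0\in\D(A^{\mu})$. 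I would justify this by noting that the contour argument behind Lemma \ref{lem:sm} is valid for all $\xi\le1$ — for $\xi<0$ the integrand in \eqref{eq:axies} merely decays faster — so \eqref{eq:sabe1} extends to $\xi=-\gamma/\alpha$. Taking $L^p(\Omega)$-norms and assembling the three contributions yields $\|e_0\|_{L^p(\Omega;C^\gamma([0,T];H_0))}\le C\lambda_{n+1}^{-\nu}$, which with \eqref{eq:sgest} proves (a). Finally, (b) follows the same scheme through Corollary \ref{cor:main} instead of Theorem \ref{thm:main}; now only the supremum of $e_0$ enters, so no negative powers arise, and letting $\gamma_1\uparrow\frac12-\frac1p$ and $\gamma_2\uparrow1$ converts the $L^1$-thresholds for $h_1,h_2$ into the stated ranges for $\nu_1,\nu_2$, while the supremum bounds need only $u_0\in\D(A^\nu)$ and $u_1\in\D(A^{\max(0,\nu-1/\alpha)})$.
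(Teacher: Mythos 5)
Your overall strategy coincides with the paper's: verify Assumptions \ref{hypogeneral}--\ref{ass:semigroup:app} with exactly the integrability thresholds you state, apply Theorem \ref{thm:main} (resp.\ Corollary \ref{cor:main}) to obtain \eqref{eq:sgest} (resp.\ \eqref{eq:sgest2}), and then bound the initial-data contributions separately; your treatment of $e(0)$, of the supremum bounds, of the $u_1$-summand and of part (b) is essentially the paper's. The crux, and the place where your argument has a genuine gap, is the H\"older seminorm of the $u_0$-part of $e_0$. There you need $\|A^{-\gamma/\alpha}\dot S^{\alpha,1}(r)\|_{\mathcal{L}(H_0)}\le Mr^{\gamma-1}$, i.e.\ \eqref{eq:sabe1} with $\beta=1$ and the \emph{negative} exponent $\xi=-\gamma/\alpha$, and you justify it by claiming that the contour argument behind Lemma \ref{lem:sm} extends to $\xi<0$ because ``the integrand in \eqref{eq:axies} merely decays faster''. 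That is false. Estimate \eqref{eq:axies} rests on the moment inequality $\|A^{\xi}(z^{\alpha}+A)^{-1}\|_{\mathcal{L}(H_0)}\le C|z|^{\alpha(\xi-1)}$, which is valid precisely for $\xi\in[0,1]$: for $\xi<0$ one has, by spectral calculus, $\|A^{\xi}(z^{\alpha}+A)^{-1}\|_{\mathcal{L}(H_0)}=\sup_{\lambda\in\sigma(A)}\lambda^{\xi}|z^{\alpha}+\lambda|^{-1}$, the supremum sits at the bottom eigenvalue $\lambda_1$, and for $|z|^{\alpha}\gg\lambda_1$ this is of order $\lambda_1^{\xi}|z|^{-\alpha}$, far larger than the required $|z|^{\alpha(\xi-1)}$. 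With the only available bound $|z|^{-\alpha}$, the contour integral for $\dot S^{\alpha,1}$ yields $r^{-1}$ rather than $r^{\gamma-1}$, and your step $\int_s^t r^{\gamma-1}\,dr\le\gamma^{-1}(t-s)^{\gamma}$ collapses.

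The estimate you want is nevertheless true and can be proved with the paper's own toolkit. Taking Laplace transforms, $\widehat{\dot S^{\alpha,1}}(z)=z\widehat{S^{\alpha,1}}(z)-I=-A(z^{\alpha}+A)^{-1}=-A\widehat{S^{\alpha,\alpha}}(z)$, so $\dot S^{\alpha,1}(t)=-AS^{\alpha,\alpha}(t)$ and hence $A^{\nu}\dot S^{\alpha,1}(t)u_0=-A^{1-\gamma/\alpha}S^{\alpha,\alpha}(t)A^{\nu+\gamma/\alpha}u_0$. Since the hypothesis $\nu+\frac{\gamma}{\alpha}\le 1$ (with $\nu\ge 0$) forces $\gamma\le\alpha$, the exponent $1-\gamma/\alpha$ lies in $[0,1]$, and \eqref{eq:sabe} with $\beta=\alpha$ gives $\|A^{1-\gamma/\alpha}S^{\alpha,\alpha}(t)\|_{\mathcal{L}(H_0)}\le Mt^{\alpha-\alpha(1-\gamma/\alpha)-1}=Mt^{\gamma-1}$, which is exactly what your fundamental-theorem-of-calculus step needs. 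With this repair your route is a legitimate, more elementary alternative to the paper's, which instead invokes the fractional-derivative bound of \cite[Lemma 5.4]{DL} for $v(t)=S^{\alpha,1}(t)x-x$ together with the fact (\cite{Zygmund}, \cite{CLS}) that a bounded Riemann--Liouville derivative of order $\gamma$ implies $\gamma$-H\"older continuity. A last, minor inaccuracy: for the bare estimate \eqref{eq:sgest} you assert $X^0,X^0_n\in L^p(\Omega;C^{\gamma}([0,T];H_0))$ from smoothing alone; for $u_0$ merely in $L^p(\Omega;H_0)$ this fails (the increment bound at $t=0$ is only logarithmic). This is harmless, however, since \eqref{eq:sgest} is vacuous unless $\|e_0\|_{L^p(\Omega;C^{\gamma}([0,T];H_0))}<\infty$, and that finiteness is all the absorption argument in Theorem \ref{thm:main} requires.
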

\begin{proof}
Estimate \eqref{eq:sgest} follows from Theorem \ref{thm:main} using Assumption \ref{hypogeneral1} and estimates \eqref{eq:ex1est1} and \eqref{eq:ex1est2}.  To estimate the initial terms first note that as $S^{\alpha,1}(0)=I$ and $S^{\alpha,2}(0)=0$ it follows that
$$
\|e(0)\|_{L^p(\Omega;H_0)}=\|u_0-\mathcal{P}_nu_0\|_{L^p(\Omega;H_0)}\leq C \lambda_{n+1}^{-\nu}\|A^\nu u_0\|_{L^p(\Omega;H_0)}.
$$
It is shown in \cite[Lemma 5.4]{DL} that if $x\in \D(A^{\nu+\frac{\gamma}{\alpha}})$ with $\nu+\frac{\gamma}{\alpha}\leq 1$, then the function $v(t):= S^{\alpha,1}(t)x-x$ admits a fractional derivative of order $\gamma$ defined by
$$
D^{\gamma}_tv(t)=\frac{1}{\Gamma(1-\gamma)}\frac{d}{dt}\int_0^t(t-s)^{-\gamma}v(s)\,d s,\,\gamma\in (0,1),
$$
in $\mathcal{D}(A^\nu)$ and
$$
\|A^\nu D^{\gamma}_tv(t)\|_{H_0}\leq M\|A^{\nu+\frac{\gamma}{\alpha}}x\|_{H_0}, \,t\in (0,T].
$$
A straightforward calculation shows that (see also, \cite[Vol. II, p. 138]{Zygmund}, \cite{CLS})
$$
\|A^\nu v(\cdot)\|_{C^\gamma([0,T];H_0)} \leq C \|A^\nu D^{\gamma}_tv(\cdot)\|_{L^{\infty}([0,T];H_0)}.
$$
Therefore,
\begin{align}
&\|S^{\alpha,1}(\cdot)u_0-S^{\alpha,1}_n(\cdot)u_0\|_{L^p(\Omega;C^\gamma([0,T];H_0))}=\|S^{\alpha,1}(\cdot)(I-\mathcal{P}_n)u_0\|_{L^p(\Omega;C^\gamma([0,T];H_0))}\\
&\leq \lambda_{n+1}^{-\nu}\|A^\nu S^{\alpha,1}(\cdot)u_0\|_{L^p(\Omega;C^\gamma([0,T];H_0))}=\lambda_{n+1}^{-\nu}\|A^\nu (S^{\alpha,1}(\cdot)u_0-u_0)\|_{L^p(\Omega;C^\gamma([0,T];H_0))}\\
&\leq C  \lambda_{n+1}^{-\nu}\|A^\nu D^{\gamma}_t (S^{\alpha,1}(\cdot)u_0-u_0)\|_{L^p(\Omega;L^{\infty}([0,T];H_0))}
\leq C  \lambda_{n+1}^{-\nu} \|A^{\nu+\frac{\gamma}{\alpha}} u_0\|_{L^p(\Omega;H_0)}.
\end{align}
Here, for the equality in the second row of the calculation, we used the fact that the $C^\gamma([0,T];H_0))$-seminorm of a function $f:[0,T]\to H_0$ does not change by adding a constant to $f$.
Similarly, it is also shown in
 \cite[Lemma 5.4]{DL} that if $x\in \D(A^{\max(0,\nu+\frac{\gamma-1}{\alpha})})$, then the function $v(t):= S^{\alpha,2}(t)x$ admits a fractional derivative in $\mathcal{D}(A^\nu)$ and
$$
\|A^\nu D^{\gamma}_tv(t)\|_{H_0}\leq M\|A^{\max(0,\nu+\frac{\gamma-1}{\alpha})}x\|_{H_0}, \,t\in (0,T].
$$
Then, similarly as above we conclude that
$$
\|S^{\alpha,2}(\cdot)u_1-S^{\alpha,2}_n(\cdot)u_1\|_{L^p(\Omega;C^\gamma([0,T];H_0))}\leq C \lambda_{n+1}^{-\nu} \|A^{\max(0,\nu+\frac{\gamma-1}{\alpha})} u_1\|_{L^p(\Omega;H_0)}
$$
and the proof of (a) is complete. To show the claims in (b) first note that estimate \eqref{eq:sgest2} follows from Corollary \ref{cor:main} by choosing $\gamma_1$ sufficiently close to $\frac{1}{2}-\frac{1}{p}$ and $\gamma_2$ sufficiently close to 1 together with Assumption \ref{hypogeneral1} and estimates \eqref{eq:ex1est1} and \eqref{eq:ex1est2}. To estimate the initial terms first note that, by \eqref{eq:sabe} with $\xi=0$ and $\beta=1$, using also the fact that $S^{\alpha,1}$ and $A^{-\nu}$ commutes for $\nu\in [0,1]$,
\begin{multline}
\|S^{\alpha,1}(t)u_0-S^{\alpha,1}_n(t)u_0\|_{H_0}=\|(I-\mathcal{P}_n)S^{\alpha,1}(t)u_0\|_{H_0}\leq C \lambda_{n+1}^{-\nu}\|A^\nu S^{\alpha,1}(t)u_0\|_{H_0}\\
=C \lambda_{n+1}^{-\nu}\|A^\nu S^{\alpha,1}(t)A^{-\nu}A^\nu u_0\|_{H_0}=C \lambda_{n+1}^{-\nu}\| S^{\alpha,1}(t)A^\nu u_0\|_{H_0}\leq C \lambda_{n+1}^{-\nu} \|A^\nu u_0\|_{H_0},
\end{multline}
for all $t\geq 0$. Therefore,
$$
\|S^{\alpha,1}(\cdot)u_0-S^{\alpha,1}_n(\cdot)u_0\|_{L^p(\Omega;C([0,T];H_0))}\leq C  \lambda_{n+1}^{-\nu} \|A^{\nu} u_0\|_{L^p(\Omega;H_0)}.
$$
For the second initial term, let first $\nu\leq \frac{1}{\alpha}$. Then, by \eqref{eq:sabe} with $\xi=\nu$ and $\beta=2$,
 \begin{multline}
\|S^{\alpha,2}(t)u_1-S^{\alpha,2}_n(t)u_1\|_{H_0}=\|(I-\mathcal{P}_n)S^{\alpha,2}(t)u_1\|_{H_0}\leq C \lambda_{n+1}^{-\nu}\|A^\nu S^{\alpha,2}(t)u_1\|_{H_0}\\
=C t^{1-\alpha \nu}\lambda_{n+1}^{-\nu}\|u_1\|_{H_0},\, t\geq 0.
\end{multline}
On the other hand, if $1\geq \nu> \frac{1}{\alpha}$, then by \eqref{eq:sabe} with $\xi=\frac{1}{\alpha}$ and $\beta=2$,
\begin{multline*}
\|S^{\alpha,2}(t)u_1-S^{\alpha,2}_n(t)u_1\|_{H_0}=\|(I-\mathcal{P}_n)S^{\alpha,2}(t)u_1\|_{H_0}\leq C \lambda_{n+1}^{-\nu}\|A^\nu S^{\alpha,2}(t)u_1\|_{H_0}\\
=C \lambda_{n+1}^{-\nu}\|A^{\frac{1}{\alpha}} S^{\alpha,2}(t)A^{\nu-\frac{1}{\alpha}}u_1\|_{H_0}\leq C \lambda_{n+1}^{-\nu}\|A^{\nu-\frac{1}{\alpha}}u_1\|_{H_0},\,t\geq 0.
\end{multline*}
In summary, if $u_1\in L^p(\Omega;\D(A^{\max(0,\nu-\frac{1}{\alpha})}))$, then
$$
\|S^{\alpha,2}(\cdot)u_1-S^{\alpha,2}_n(\cdot)u_1\|_{L^p(\Omega;C^\gamma([0,T];H_0))}\leq C \lambda_{n+1}^{-\nu} \|A^{\max(0,\nu-\frac{1}{\alpha})} u_1\|_{L^p(\Omega;H_0)},
$$
which finishes the proof of item (b) and hence that of the theorem.
\end{proof}
\begin{remark}
In general, we may observe that larger values of \fahimm{the parameters $\beta$ and $\kappa$} allow for higher rates of convergence in Theorem \ref{thm:example1} as larger values of these parameters correspond to better time-regularity of the stochastic, respectively, deterministic feedback. Note also, that the operator family $S^{\alpha,2}$, which is $S^{\alpha,\beta}$ with $\beta=2$ has a stronger smoothing effect than $S^{\alpha,1}$, which is $S^{\alpha,\beta}$ with $\beta=1$, as Lemma \ref{lem:sm} shows. This explains why the regularity requirement in Theorem \ref{thm:example1} is stricter on $u_0$ than that on $u_1$ for the same rate of convergence.
\end{remark}
\begin{example}[Stochastic heat equation]\label{ex:sg1}
The stochastic heat equation corresponds to parameters $\alpha=\beta=\kappa=1$ and $u_1=0$. In this case, Theorem \ref{thm:example1}, yields
$$
 \|e\|_{L^p(\Omega;C^\gamma([0,T];H_0))}\leq C\lambda_{n+1}^{-\nu},
$$
for $\gamma<\min(\frac12-\frac{1}{p}-\gamma_1,1-\gamma_2)$ and $\nu=\min(\nu_1,\nu_2)$ where $\nu_1<\gamma_1 -\delta_G$  and $\nu_2<\gamma_2 -\delta_F$, provided that $u_0\in L^p(\Omega;\D(A^{\nu+\gamma}))$. Furthermore,
$$
 \|e\|_{L^p(\Omega;C([0,T];H_0))}\leq  C\lambda_{n+1}^{-\nu}
$$
for $\nu=\min(\nu_1,\nu_2)$ where $\nu_1< \frac12-\frac{1}{p}-\delta_G$  and $\nu_2<1 -\delta_F$, provided that $u_0\in L^p(\Omega;\D(A^{\nu}))$. This is consistent with  \cite[Proposition 3.1]{sonja3}.
\end{example}
\begin{example}[Fractional stochastic heat equation]\label{ex:sg2} The simplest fractional stochastic heat equation, considered also in Example \ref{ex:main1} in the standard global Lipschitz setting, corresponds to parameters $\beta=\kappa=1$, $\alpha\in (0,1)$ and $u_1=0$. In this case, Theorem \ref{thm:example1}, yields
$$
 \|e\|_{L^p(\Omega;C^\gamma([0,T];H_0))}\leq C\lambda_{n+1}^{-\nu},
$$
for $\gamma<\min(\frac12-\frac{1}{p}-\gamma_1,1-\gamma_2)$ and $\nu=\min(\nu_1,\nu_2)$ where $\nu_1<\frac{\gamma_1}{\alpha} -\delta_G$  and $\nu_2<\frac{\gamma_2}{\alpha} -\delta_F$, provided that $u_0\in L^p(\Omega;\D(A^{\nu+\frac{\gamma}{\alpha}}))$ and $\max(\nu_1+\delta_G,\nu_2+\delta_F)\leq 1$ holds. Furthermore,
$$
 \|e\|_{L^p(\Omega;C([0,T];H_0))}\leq  C\lambda_{n+1}^{-\nu}
$$
for $\nu=\min(\nu_1,\nu_2)$ where $\nu_1< \frac{\frac12-\frac{1}{p}}{\alpha}-\delta_G$  and $\nu_2<\frac{1}{\alpha} -\delta_F$, provided that $u_0\in L^p(\Omega;\D(A^{\nu}))$ and $\max(\nu_1+\delta_G,\nu_2+\delta_F)\leq 1$ holds. As discussed in  Example \ref{ex:main1}, the rate only improves with decreasing $\alpha$ as long as $\max(\nu_1+\delta_G,\nu_2+\delta_F)\leq 1$ holds and stays the same when $\alpha$ decreases further. For example, if $\delta_F=\delta_G=0$, then
$$
 \|e\|_{L^p(\Omega;C([0,T];H_0))}\leq  C\lambda_{n+1}^{-\nu}
$$
for $\nu<\max(\frac{\frac12-\frac{1}{p}}{\alpha},1)$.
\end{example}
\begin{example}[Fractional stochastic wave equation]\label{ex:sg3}
The simplest fractional stochastic wave equation, considered also in Example \ref{ex:main2} in the standard global Lipschitz setting, corresponds to parameters $\beta=\kappa=1$, $\alpha\in (1,2)$. In this case, Theorem \ref{thm:example1}, yields
$$
 \|e\|_{L^p(\Omega;C^\gamma([0,T];H_0))}\leq C\lambda_{n+1}^{-\nu},
$$
for $\gamma<\min(\frac12-\frac{1}{p}-\gamma_1,1-\gamma_2)$ and $\nu=\min(\nu_1,\nu_2)$ where $\nu_1<\frac{\gamma_1}{\alpha} -\delta_G$  and $\nu_2<\frac{\gamma_2}{\alpha} -\delta_F$, provided that $u_0\in L^p(\Omega;\D(A^{\nu+\frac{\gamma}{\alpha}}))$ and $u_1\in L^p(\Omega;\D(A^{\max(0,\nu+\frac{\gamma-1}{\alpha})}))$. Furthermore,
$$
 \|e\|_{L^p(\Omega;C([0,T];H_0))}\leq  C\lambda_{n+1}^{-\nu}
$$
for $\nu=\min(\nu_1,\nu_2)$ where $\nu_1< \frac{\frac12-\frac{1}{p}}{\alpha}-\delta_G$  and $\nu_2<\frac{1}{\alpha} -\delta_F$, provided that $u_0\in L^p(\Omega;\D(A^{\nu}))$, $u_1\in L^p(\Omega;\D(A^{\max(0,\nu-\frac{1}{\alpha})}))$. We see that the rates deteriorate with increasing $\alpha$.
\end{example}

\subsection{Finite element approximation of a stochastic fractional wave equation}\label{sg_two}
In this section we give a more concrete example to demonstrate how the abstract framework can be used for finite element approximation.  Let $\mathcal{D}\subset \R^d$ be a bounded convex polygonal domain and let $A=-\Delta$ be the Dirichlet Laplacian in $(H_0,\|\cdot\|_{H_0}):=(L^2(\mathcal{D}),\|\cdot\|_{L^2(\mathcal{D})})$ with domain $\mathcal{D}(A)=H^2(\mathcal{D})\cap H^1_0(\mathcal{D})$. Let $p>2$ and consider a fractional stochastic wave equation  \cite{CDP,KLS20} given by
\begin{equation}\label{eq:fwe}
\left\{
\begin{aligned}
dU(t)+A\int_0^tb(t-s)U(s)\,ds\,dt&=F(U(s))dt+R(U(s))Q^\frac{1}{2}d W_H(t),\,t>0; \\
U(0)&=U_0,
\end{aligned}
\right.
\end{equation}
where $W_H$ is a cylindrical Wiener process in $H=H_0$ and $Q:H\to H$ is a linear, symmetric, positive semidefinite, trace class operator on $H$ with an orthonormal basis of eigenfunctions $\{e_k:k\in\NN\}$ with $\|e_k\|_{L^\infty(\mathcal{D})}\leq M$ for all $k=1,2,\dots$. The initial data $U_0\in L^p(\Omega;H_0)$ for some $p>2$ is assumed to be $\mathcal{F}_0$-measurable. The kernel $b$ is the Riesz kernel given by
$$
b(t)=\frac{t^{\alpha-1}}{\Gamma(\alpha)},\quad \alpha \in (0,1).
$$
For $u\in H_0$ define $[F(u)](r):=f(u(r))$ with $f:\R\to\R$ being a globally Lipschitz continuous function while $[R(u)v](r):=g(u(r))v(r)$ with $g:\R\to\R$ being a globally Lipschitz continuous function. Then we may take $H_0=H_1=H_2=H$ and a straightforward calculation yields that $F$ and $G(u)v:=R(u)Q^\frac12 v$ satisfy Assumption \ref{hypogeneral}.
The mild solution of \eqref{eq:fwe} is given by the variation of constants formula
\begin{equation}\label{eq:UUU}
U(t)=S(t)U_0+\int_0^tS(t-s)F(U(s))\, ds+\int_0^tS(t-s)G(U(s))\,d W_H(s).
\end{equation}
Here the resolvent family $\{S(t)\}_{t\geq 0}$ is a strongly continuous family of bounded linear operators on $H_0$, which is strongly differentiable  on $(0,\infty)$ such that the function $t\mapsto S(t)x$ is the unique solution of
\begin{equation}\label{eq:vstro}
\dot{u}(t)+A\int_0^t b(t-s)u(s)\,\, d s=0,\,t>0;\,u(0)=x,
\end{equation}
see \cite[Corollary 1.2]{pruss}. 
\begin{remark}\label{rem:corresp}
In connection with Subsection \ref{sg_one}, in particular \eqref{eq:sabb}, by integrating \eqref{eq:vstro} from 0 to $t$,  one sees that in fact $S(t)=S^{\alpha+1,1}(t)$.
\end{remark}
 The resolvent family $S$ has the following smoothing properties \cite{MT}:
\begin{align}
&\|A^\mu S(t)\|_{\mathcal{L}(H_0)}\leq Ct^{-(\alpha+1)\mu} ,\quad \mu\in [0,1],\,t>0;\\
&\|A^\mu \dot{S}(t)\|_{\mathcal{L}(H_0)}\leq Ct^{-(\alpha+1)\mu-1} ,\quad \mu\in [-1,1],\,t>0.\label{eq:smp2}
\end{align}
For spatial approximation of \eqref{eq:fwe} we consider a standard continuous finite element method.  Let $\{\mathcal{T}_h\}_{0<h<1}$ denote a family of triangulations of $\mathcal{D}$, with mesh size $h>0$ and consider finite element spaces $\{ V_h \}_{0<h<1}$, where $V_h\subset H^1_0(\mathcal{D})$ consists of continuous piecewise linear functions vanishing at the boundary of $\mathcal{D}$. We introduce the "discrete Laplacian" (see, for example, \cite[page 10]{Thomeebook})
\begin{equation}\label{def:Ah}
  A_{h}:V_h\to V_h,   \quad
    \inner[H_0]{A_{h} \psi}{ \chi} = \inner[H_0]{ \nabla \psi}{\nabla \chi},\quad \psi,\chi \in V_h,
\end{equation}
where $\inner[H_0]{\cdot}{\cdot}$ denotes the inner product of $H_0$, and the orthogonal projection
$$
    P_{h}: H_0 \to V_h,\quad
   \inner[H_0]{P_{h} f}{ \chi} = \inner[H_0]{ f} {\chi},\quad \chi \in V_h.
$$
\fahim{We consider the approximated problem}
\begin{equation}\label{eq:fweh}
\left\{
\begin{aligned}
dU_h(t)+A_h\int_0^tb(t-s)U_h(s)\,ds\,dt&=P_hF(U_h(s))dt+P_hG(U_h(s))d W_H(t); \\
U_h(0)&=P_hU_0,
\end{aligned}
\right.
\end{equation}
with mild solution given by
\begin{equation}\label{eq:uh}
U_h(t)=S_h(t)P_hU_0+\int_0^tS_h(t-s)P_hF(U_h(s))\, ds+\int_0^tS_h(t-s)P_hG(U_h(s))\,d W_H(s).
\end{equation}
Similarly to the resolvent family $\{S(t)\}_{t\geq 0}$, the resolvent family $\{S_h(t)\}_{t\geq 0}$ is a strongly continuous family of bounded linear operators on $V_h$, which is strongly differentiable  on $(0,\infty)$ such that for $\chi\in V_h$ the $V_h$-valued function $t\mapsto S_h(t)\chi$ is the unique solution of
\begin{equation}\label{eq:vstroh}
\dot{u}_h(t)+\int_0^t b(t-s)A_hu_h(s)\,\, d s=0,\,t>0;\,u_h(0)=\chi.
\end{equation}
Let $E_h(t):=S(t)-S_h(t)P_h$ denote the deterministic error operator. We have the following error bounds.
\begin{proposition}\label{prop:estimateEh}
Let $\epsilon>0$ and $T>0$. Then, the error estimates
\begin{align}
&\|E_h(t)x\|_{H_0}\leq C_\epsilon h^\beta \|A^{\beta(\frac{1+\epsilon}{2})}x\|_{H_0},\quad \beta\in [0,2],\, x\in \mathcal{D}(A^{\beta(\frac{1+\epsilon}{2})}),\,t\in [0,T];\label{eq:esti1}\\
&\|E_h(t)\|_{\mathcal{L}(H_0)}\leq Ch^{2\beta}t^{-\beta(\alpha+1)},\quad \beta\in [0,1],\,t\in (0,T];\label{eq:esti2}\\\
&\|\dot{E}_h(t)\|_{\mathcal{L}(H_0)} \leq Ch^{2\beta}t^{-\beta(\alpha+1)-1},\quad \beta\in [0,1],\,t\in (0,T],\label{eq:esti3}
\end{align}
hold for $0<h< 1$ and $C,C_\epsilon>0$.
\end{proposition}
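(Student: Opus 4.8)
The plan is to represent the error operator $E_h(t)$ as a Bromwich contour integral over $\Gamma_{\rho,\phi}$ and to reduce all three bounds to a single resolvent error estimate for the associated elliptic problem, which is then fed through the contour and rescaled in $|z|$ to produce the stated powers of $t$ and $h$. By Remark \ref{rem:corresp} we have $S=S^{\alpha+1,1}$, so that, exactly as in the proof of Lemma \ref{lem:sm} (now with $\alpha$ replaced by $\alpha+1$ and the outer exponent equal to $1$),
$$
S(t)x=\frac{1}{2\pi i}\int_{\Gamma_{\rho,\phi}}e^{zt}z^{\alpha}(z^{\alpha+1}+A)^{-1}x\,dz,\qquad (\alpha+1)\phi<\pi,
$$
and the discrete resolvent family admits the identical representation with $A$ replaced by $A_h$ and $x$ by $P_hx$. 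Subtracting gives
$$
E_h(t)=\frac{1}{2\pi i}\int_{\Gamma_{\rho,\phi}}e^{zt}z^{\alpha}R(z^{\alpha+1})\,dz,\qquad R(w):=(w+A)^{-1}-(w+A_h)^{-1}P_h .
$$
Since $\alpha\in(0,1)$ one may fix $\phi\in(\pi/2,\pi/(\alpha+1))$, so that $w=z^{\alpha+1}$ traverses a sector of half-angle $(\alpha+1)\phi<\pi$ avoiding $(-\infty,-\lambda_1]$; consequently $\|(w+A)^{-1}\|_{\mathcal{L}(H_0)}+\|(w+A_h)^{-1}\|_{\mathcal{L}(V_h)}\le C|w|^{-1}$ and $\|w(w+A_h)^{-1}\|_{\mathcal{L}(V_h)}\le C$, uniformly in $h$, the operator $A_h$ being self-adjoint and positive definite on $V_h$ with smallest eigenvalue $\ge\lambda_1$.

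The crux is the resolvent error bound
$$
\|R(w)\|_{\mathcal{L}(H_0)}\le C\,h^{2\beta}|w|^{\beta-1},\qquad \beta\in[0,1],\ w\ \text{in the sector}. \tag{$\star$}
$$
To prove $(\star)$ I would introduce the Ritz projection $R_h:=A_h^{-1}P_hA$ and, writing $u=(w+A)^{-1}f$ and $u_h=(w+A_h)^{-1}P_hf$, verify the identity
$$
u-u_h=\big[I-w(w+A_h)^{-1}P_h\big](I-R_h)u,
$$
which follows from $(w+A_h)(u_h-R_hu)=w\,P_h(I-R_h)u$ together with $P_hR_hu=R_hu$. Hence $\|u-u_h\|_{H_0}\le C\|(I-R_h)u\|_{H_0}$. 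The standard finite element estimate $\|(I-R_h)v\|_{H_0}\le Ch^{s}\|A^{s/2}v\|_{H_0}$ for $s\in[1,2]$ (valid on the convex polygonal domain $\mathcal{D}$ through full $H^2$-regularity and Aubin--Nitsche duality), combined with the sectorial smoothing bound $\|A^{\beta}(w+A)^{-1}\|_{\mathcal{L}(H_0)}\le C|w|^{\beta-1}$, yields $(\star)$ for $\beta\in[1/2,1]$ upon choosing $s=2\beta$; the range $\beta\in[0,1/2]$ then follows by combining the two endpoint bounds $\|R(w)\|\le C|w|^{-1}$ and $\|R(w)\|\le Ch|w|^{-1/2}$ through $\min(M_0,M_1)\le M_0^{1-2\beta}M_1^{2\beta}$.

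Given $(\star)$, estimates \eqref{eq:esti2} and \eqref{eq:esti3} are immediate. Inserting $(\star)$ and $|z^{\alpha+1}|^{\beta-1}=|z|^{(\alpha+1)(\beta-1)}$ gives
$$
\|E_h(t)\|_{\mathcal{L}(H_0)}\le C h^{2\beta}\!\!\int_{\Gamma_{\rho,\phi}}\!\!|e^{zt}|\,|z|^{(\alpha+1)\beta-1}\,|dz|,\qquad
\|\dot E_h(t)\|_{\mathcal{L}(H_0)}\le C h^{2\beta}\!\!\int_{\Gamma_{\rho,\phi}}\!\!|e^{zt}|\,|z|^{(\alpha+1)\beta}\,|dz|,
$$
the second bound obtained by differentiating under the integral sign, which is legitimate since the differentiated integrand still yields a convergent integral. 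Taking $\rho=1/t$ and substituting $z=\zeta/t$ makes each integral scale-invariant up to the explicit power of $t$, producing $t^{-(\alpha+1)\beta}$ and $t^{-(\alpha+1)\beta-1}$ respectively, which are exactly \eqref{eq:esti2} and \eqref{eq:esti3}. For the time-uniform smooth-data bound \eqref{eq:esti1} I would use the smoothed version of $(\star)$: composing with $A^{-r}$, $r=\beta(1+\epsilon)/2$, and estimating $\|(I-R_h)A^{-r}u\|_{H_0}\le Ch^{\beta}\|A^{\beta/2-r}u\|_{H_0}$ with $\beta/2-r=-\beta\epsilon/2<0$, so that the bounded negative power $A^{-\beta\epsilon/2}$ together with $\|u\|_{H_0}\le C|w|^{-1}\|f\|_{H_0}$ gives $\|R(w)A^{-r}\|_{\mathcal{L}(H_0)}\le C_\epsilon h^{\beta}|w|^{-1}$. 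Then
$$
\|E_h(t)A^{-r}\|_{\mathcal{L}(H_0)}\le C_\epsilon h^{\beta}\int_{\Gamma_{\rho,\phi}}|e^{zt}|\,|z|^{-1}\,|dz|\le C_\epsilon h^{\beta},
$$
uniformly in $t\in(0,T]$, since the last integral is scale-invariant and hence bounded in $t$; the values $\beta\in[0,1]$ (where $s=\beta<1$ is inadmissible in the Ritz estimate) are reached by interpolating the endpoints $\beta=0$, i.e.\ the trivial uniform bound $\|E_h(t)\|_{\mathcal{L}(H_0)}\le C$, and $\beta=1$, while the margin $\epsilon>0$ keeps the exponent negative and sidesteps the borderline identification $\mathcal{D}(A^{s/2})=H^s$ at half-integer $s$. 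The endpoint $t=0$ reduces to $E_h(0)=I-P_h$ and is covered directly by the projection estimate in the spirit of \eqref{PN:est}.

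The main obstacle is precisely the uniform-in-$w$ resolvent error estimate $(\star)$: establishing it rests on the sectorial resolvent bounds for the discrete operator $A_h$ holding uniformly in $h$ and, at the endpoint $\beta=1$, on the full $H^2$ elliptic regularity afforded by the convexity of $\mathcal{D}$ in the Aubin--Nitsche duality argument. Once $(\star)$ and its smoothed variant are in hand, the passage through the contour and the rescaling in $|z|$ are routine.
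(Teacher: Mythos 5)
Your proof is correct in its essentials, but it follows a genuinely different --- and considerably more self-contained --- route than the paper. The paper does not prove \eqref{eq:esti1} or \eqref{eq:esti2} at all: \eqref{eq:esti1} is quoted from \cite[Proposition 3.3]{mishi1}, and \eqref{eq:esti2} for $\beta=1$ from \cite[Theorem 2.1]{LST} (intermediate $\beta$ by interpolation against the stability bound); the entire proof is devoted to \eqref{eq:esti3}, and even there the key input is borrowed, namely the Laplace-transform error bound $\|\widehat E(z)\|_{\mathcal L(H_0)}\le Ch^{2}|z|^{\alpha}$ of \cite[Eq.~(2.2)]{LST}. After the substitution $w=z^{\alpha+1}$ and the identity $\widehat E(z)=z^{\alpha}R(z^{\alpha+1})$, that cited bound is exactly your estimate $(\star)$ at $\beta=1$, so your Ritz-projection argument amounts to a proof of the result the paper imports. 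The mechanics then diverge: since $E_h(0)=I-P_h\neq 0$, the paper cannot write $\widehat{\dot E_h}(z)=z\widehat E(z)$ and therefore splits $E_h(t)=E_h(t)P_h+S(t)(I-P_h)$, handling the first piece by contour inversion of $z\widehat E(z)P_h$ and the second by the smoothing estimate \eqref{eq:smp2} combined with self-adjointness of $P_h$ and $\dot S(t)$; moreover, to interpolate down to $\beta<1$ in the time domain it must separately establish $\|\dot S_h(t)P_h\|_{\mathcal L(H_0)}\le Ct^{-1}$ via the uniform discrete resolvent estimate. You avoid both the splitting and that extra step by differentiating the contour representation of $E_h$ directly (legitimate for $t>0$ irrespective of the value of $E_h(0)$) and by interpolating once and for all at the resolvent level, so that all three estimates, for all $\beta$, fall out of the single bound $(\star)$ and its smoothed variant. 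What each buys: the paper's proof is short because it leans on \cite{LST} and \cite{mishi1}; yours is longer but unified and reproves those inputs from scratch. Two steps you treat as routine should be spelled out in a full write-up, though you flag both correctly, so I regard them as presentational rather than gaps: (i) the passage from the endpoints $\beta=0,1$ to $\beta\in[0,1]$ in \eqref{eq:esti1} is an operator interpolation, $\|E_h(t)A^{-\beta(1+\epsilon)/2}\|_{\mathcal L(H_0)}\le\|E_h(t)\|_{\mathcal L(H_0)}^{1-\beta}\,\|E_h(t)A^{-(1+\epsilon)/2}\|_{\mathcal L(H_0)}^{\beta}$, which needs a three-lines argument using that $A^{i\tau}$ is unitary; (ii) the Ritz estimate $\|(I-R_h)v\|_{H_0}\le Ch^{s}\|A^{s/2}v\|_{H_0}$, $s\in[1,2]$, rests on full $H^{2}$-regularity, which is precisely where the convexity of $\mathcal D$ enters.
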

\begin{proof}
The error bound \eqref{eq:esti1} is shown in \cite[Proposition 3.3]{mishi1}. The error estimate \eqref{eq:esti2} for $\beta=1$ is proved in \cite[Theorem 2.1]{LST} while for $\beta\in [0,1)$ it follows immediately using also the stability estimate $\|E_h(t)\|\leq C$; the latter is a consequence of  \eqref{eq:esti1} with $\beta=0$. Thus, we have to prove \eqref{eq:esti3}.
It is shown in \cite[Equation (2.2)]{LST} that the Laplace transform $\widehat{E}(z)$ of $E_h$ satisfies the error estimate
\begin{equation}\label{eq:Ez}
\|\widehat{E}(z)\|_{\mathcal{L}(H_0)}\leq C h^2 |z|^\alpha
\end{equation}
in a symmetric sectorial region containing the right half-plane. We write
$$
E_h(t)=S(t)-S_h(t)P_h=S(t)P_h-S_h(t)P_h+S(t)(I-P_h):=E^1_h(t)+E^2_h(t).
$$
Let $x\in \mathcal{D}(A)$. Then, it follows that  $t\mapsto E^1_h(t)x$ is continuously differentiable on $[0,\infty)$, $E^1_h(0)x=0$ and
$$
\|\dot{E}^1_h(t)x\|_{H_0}\leq Ct^{\alpha}(\|Ax\|_{H_0}+\|A_hP_hx\|_{H_0}).
$$
Hence $t\mapsto \dot{E}^1_h(t)x$ is Laplace transformable and
\begin{equation}\label{eq:E1dd}
\widehat{\dot{E}^1_h}(z)x=z\widehat{E^1_h}(z)x=z\widehat{E}(z)P_hx.
\end{equation}
Let $\theta\in (\frac{\pi}{2},\frac{\pi}{1+\alpha})$ be fixed and let $\Gamma:=\{z:\, |\arg(z)|=\theta\}$ denote the curve with $\text{Im}\,z$ running from $-\infty$ to $\infty$. Then, using \eqref{eq:Ez} and \eqref{eq:E1dd}, we get
\begin{align*}
&\|\dot{E}^1_h(t)x\|_{H_0}=\left\|\frac{1}{2\pi i}\int_\Gamma e^{tz}z\widehat{E}(z)P_hx\,dz\right\|_{H_0}
\\
&\leq Ch^2 \int_\Gamma |z|^{\alpha+1} e^{-ct|z|}\, |dz|\|P_hx\|_{H_0}\leq  Ch^2 \int_0^\infty s^{\alpha+1}e^{-cst}\,ds\|P_h\|_{\mathcal{L}(H_0)}\|x\|_{H_0}\\
&\leq Ch^2 \int_0^\infty \left(\frac{r}{t}\right)^{\alpha+1}e^{-cr}\,\frac{dr}{t}\|x\|_{H_0} \leq C h^2 t^{-(\alpha+1)-1}\|x\|_{H_0},\,t>0.
\end{align*}
Therefore, as $\mathcal{D}(A)$ is dense in $H_0$, we conclude that
$$
\|\dot{E}^1_h(t)\|_{\mathcal{L}(H_0)}\leq C h^2 t^{-(\alpha+1)-1},\,t>0.
$$
To bound $\dot{E}^2_h(t)$ recall that $\|(I-P_h)x\|_{H_0}\leq Ch^2\|Ax\|_{H_0}$. Hence, using the smoothing property \eqref{eq:smp2} and the self-adjointness of $P_h$ and $\dot{S}$, we get
\begin{multline*}
\|\dot{E}^2_h(t)\|_{\mathcal{L}(H_0)}=\|\dot{S}(t)(I-P_h)\|_{\mathcal{L}(H_0)}=\|[\dot{S}(t)(I-P_h)]^*\|_{\mathcal{L}(H_0)}\\=\|(I-P_h)^*\dot{S}(t)^*\|_{\mathcal{L}(H_0)}
=\|(I-P_h)\dot{S}(t)\|_{\mathcal{L}(H_0)}\leq Ch^2\|A\dot{S}(t)\|_{\mathcal{L}(H_0)}\leq C h^2 t^{-(\alpha+1)-1},
\end{multline*}
where $L^*$ denotes the adjoint of an operator $L\in \mathcal{L}(H_0)$.
Thus, in summary,
$$
\|\dot{E}_h(t)\|_{\mathcal{L}(H_0)}\leq \|\dot{E}^1_h(t)\|_{\mathcal{L}(H_0)}+\|\dot{E}^2_h(t)\|_{\mathcal{L}(H_0)}\leq C h^2 t^{-(\alpha+1)-1},\,t>0,
$$
which is \eqref{eq:esti3} for $\beta=1$. Then, it follows that to show \eqref{eq:esti3} for $\beta\in [0,1]$ it is enough to prove that
$$
\|\dot{E}_h(t)\|_{\mathcal{L}(H_0)}\leq Ct^{-1},\, t>0.
$$
As, by \eqref{eq:smp2}, $\|\dot{S}(t)\|_{\mathcal{L}(H_0)}\leq Ct^{-1}$, $t>0$, we only need to prove that
\begin{equation}\label{eq:shd}
\|\dot{S}_h(t)P_h\|_{\mathcal{L}(H_0)}\leq Ct^{-1},\,t>0.
\end{equation}
It is well-known, see, for example \cite[Chapter 6]{Thomeebook} that the uniform resolvent estimate
\begin{equation}\label{eq:resolh}
\|(zI+A_h)^{-1}P_h\|_{\mathcal{L}(H_0)}\leq \frac{M_\omega}{|z|}
\end{equation}
holds in any sector $\Sigma_\omega=\{z:\, |\arg z|<\omega\}\setminus\{0\}$, $\omega\in (0,\pi).$ A simple calculation shows that
$$
\widehat{S_h}(z)P_h=z^{\alpha}(z^{1+\alpha}I+A_h)^{-1}P_h.
$$
Note that
$$
\|\dot{S}_h(t)P_hx\|_{H_0}\leq Ct^{\alpha}\|A_h\|_{\mathcal{L}(H_0)}\|x\|_{H_0}
$$
and thus $t\to\dot{S}_h(t)P_h$ is Laplace transformable and
$$
\widehat{\dot{S}_h}(z)P_h=z^{1+\alpha}(z^{1+\alpha}I+A_h)^{-1}P_h-P_h\fahimm{,}
$$
for all $z\in \Sigma_\omega$ with $\omega<\pi/(1+\alpha)$. Using \eqref{eq:resolh} it follows that
$$
\|\widehat{\dot{S}_h}(z)P_h\|_{\mathcal{L}(H_0)}\leq M
$$
for all $z\in \Sigma_\omega$ with $\omega<\pi/(1+\alpha)$. Hence, with $\Gamma$ as above, we have
$$
\|\dot{S}_h(t)P_h\|_{\mathcal{L}(H_0)}=\left\|\frac{1}{2\pi i}\int_\Gamma e^{tz}\widehat{\dot{S}_h}(z)P_h\,dz\right\|_{\mathcal{L}(H_0)}\leq M\int_\Gamma e^{-ct|z|}\, |dz|\leq \tilde{M} t^{-1},\,t>0,
$$
and the proof is complete.
\end{proof}
We can now prove an error estimate in H\"older norms.
\begin{theorem}\label{thm:femholder}
Let $U$ and $U_h$ be given by \eqref{eq:UUU} and \eqref{eq:uh}, respectively. Set $\Err(t) := U(t)-U_h(t)$ and $e_0(t):=S(t)U_0-S_h(t)P_hU_0$.
\begin{itemize}
\item[(a)]Let $p>2$ and $0< \gamma_1<\frac12-\frac{1}{p}$. Then, for $\gamma<\frac12-\frac{1}{p}-\gamma_1$ and $\beta<\frac{\gamma_1}{\alpha+1}$ the error estimate
\begin{equation}\label{eq:femest1}
 \|e\|_{L^p(\Omega;C^\gamma([0,T];H_0))}\leq C(T,p,\beta,\gamma)\lk( \|e_0\|_{L^p(\Omega;C^\gamma([0,T];H_0))} +\|e(0)\|_{L^p(\Omega;H_0)}+Ch^{2\beta}\rk)
\end{equation}
holds. If the mesh is quasi-uniform and $U_0\in L^p(\Omega;D(A^{\frac{1}{1+\alpha}}))$, then
\begin{equation}\label{eq:femest2}
 \|e\|_{L^p(\Omega;C^\gamma([0,T];H_0))}\leq C(T,p,\beta,\gamma,U_0)h^{2\beta}.
\end{equation}
\item[(b)] Let $p>2$. Then, for $\beta<\frac{\frac12-\frac{1}{p}}{\alpha+1}$ the error estimate
\begin{equation}\label{eq:femest1a}
 \|e\|_{L^p(\Omega;C([0,T];H_0))}\leq C(T,p,\beta,\gamma)\lk( \|e_0\|_{L^p(\Omega;C([0,T];H_0))} +Ch^{2\beta}\rk)
\end{equation}
holds. If  $U_0\in L^p(\Omega;D(A^{\beta(1+\epsilon)}))$ for some $\epsilon>0$, then
\begin{equation}\label{eq:femest2a}
 \|e\|_{L^p(\Omega;C([0,T];H_0))}\leq C(T,p,\beta,\gamma,U_0)h^{2\beta}.
\end{equation}
\end{itemize}
\end{theorem}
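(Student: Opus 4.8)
The plan is to recognize the finite element pair \eqref{eq:UUU} and \eqref{eq:uh} as a concrete instance of the abstract pair \eqref{varcons}--\eqref{varconsn}, apply Theorem \ref{thm:main} for part (a) and Corollary \ref{cor:main} for part (b), and then estimate by hand the two initial-data contributions appearing on the right-hand sides. Concretely, I would set $H_0=H_1=H_2=H$, take $S^1=S^2=S$ (the resolvent family, which by Remark \ref{rem:corresp} coincides with $S^{\alpha+1,1}$), $S^1_n=S^2_n=S_h(\cdot)P_h$, $X^0(t)=S(t)U_0$, $X^0_n(t)=S_h(t)P_hU_0$, and the rate functions $r_1(n)=r_2(n)=h^{2\beta}$, with the mesh parameter $h$ playing the role of $n$. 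Assumption \ref{hypogeneral} holds by the properties of $F$ and $G(u)v=R(u)Q^{1/2}v$ already verified in the text, together with the uniform stability bound $\|S(t)\|_{\mathcal{L}(H_0)}+\sup_{h}\|S_h(t)P_h\|_{\mathcal{L}(H_0)}\le C$ (the latter following from $\|E_h(t)\|_{\mathcal{L}(H_0)}\le C$, the case $\beta=0$ of \eqref{eq:esti2}), which also furnishes the uniform constant $K$ with $\|X^0_n\|_{L^p(\Omega;L^\infty([0,T];H_0))}<K$.

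Next I would check the smoothing assumptions. Assumption \ref{ass:semigroup} follows from the cases $\mu=0$ of the resolvent bounds, namely $\|S(t)\|_{\mathcal{L}(H_0)}\le C$ and $\|\dot S(t)\|_{\mathcal{L}(H_0)}\le Ct^{-1}$, giving $s_1(t)=Ct^{\gamma_1-1}$ and $s_2(t)=Ct^{\gamma_2-1}$, both in $L^1([0,T])$. For Assumption \ref{ass:semigroup:app} one has $\Psi^i_h=h^{-2\beta}E_h$, and the operator-norm estimates \eqref{eq:esti2}, \eqref{eq:esti3} yield $t^{\gamma_i}\|\dot\Psi^i_h(t)\|_{\mathcal{L}(H_0)}+t^{\gamma_i-1}\|\Psi^i_h(t)\|_{\mathcal{L}(H_0)}\le Ct^{\gamma_i-\beta(\alpha+1)-1}=:h_i(t)$, which lies in $L^1([0,T])$ exactly when $\beta<\frac{\gamma_i}{\alpha+1}$. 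For part (a) this is the stated hypothesis for $i=1$, while $\gamma_2\in(0,1)$ can be chosen close enough to $1$ (keeping $\gamma<1-\gamma_2$) that $\beta<\frac{\gamma_2}{\alpha+1}$ also holds. Theorem \ref{thm:main} then gives \eqref{eq:femest1}; for part (b), choosing $\gamma_1$ near $\frac12-\frac1p$ and $\gamma_2$ near $1$, Corollary \ref{cor:main} gives \eqref{eq:femest1a}.

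It then remains to bound the initial-data terms by $Ch^{2\beta}$. Since $e(0)=(I-P_h)U_0$ and $\|(I-P_h)A^{-\beta}\|_{\mathcal{L}(H_0)}\le Ch^{2\beta}$ for $\beta\in[0,1]$, the term $\|e(0)\|_{L^p(\Omega;H_0)}$ is controlled using $U_0\in L^p(\Omega;\mathcal{D}(A^{1/(1+\alpha)}))$ and $\beta<\frac{1}{1+\alpha}$. In part (b) the remaining contribution is $\|e_0\|_{L^p(\Omega;C([0,T];H_0))}=\sup_{t\in[0,T]}\|E_h(t)U_0\|_{L^p(\Omega;H_0)}$, which the smooth-data estimate \eqref{eq:esti1} (applied with $2\beta$ in place of $\beta$) bounds by $C_\epsilon h^{2\beta}\|A^{\beta(1+\epsilon)}U_0\|_{L^p(\Omega;H_0)}$ uniformly in $t$; this needs only $U_0\in L^p(\Omega;\mathcal{D}(A^{\beta(1+\epsilon)}))$ and requires neither time-regularity nor quasi-uniformity, which explains why part (b) carries weaker hypotheses. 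This yields \eqref{eq:femest2a}.

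The main obstacle is the remaining term in part (a): the H\"older-in-time seminorm $\|e_0\|_{L^p(\Omega;C^\gamma([0,T];H_0))}$ of $t\mapsto E_h(t)U_0$. Here the crude derivative bound \eqref{eq:esti3} has the nonintegrable time singularity $t^{-\beta(\alpha+1)-1}$, so one cannot simply integrate $E_h(t)U_0-E_h(s)U_0=\int_s^t\dot E_h(r)U_0\,dr$. The idea is to trade space regularity of $U_0$ for a milder time singularity by establishing a smooth-data derivative estimate of the form $\|\dot E_h(t)A^{-\sigma}\|_{\mathcal{L}(H_0)}\le Ch^{2\beta}t^{\gamma-1}$ with $\sigma=\beta+\frac{\gamma}{\alpha+1}$; integrating this over $[s,t]$ and using $t^\gamma-s^\gamma\le(t-s)^\gamma$ then produces the desired increment bound, giving \eqref{eq:femest2}. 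To establish such an estimate I would split $E_h=E_h^1+E_h^2$ as in the proof of Proposition \ref{prop:estimateEh} and combine the smoothing bound \eqref{eq:smp2} (used with the negative power $A^{-\gamma/(\alpha+1)}$, which converts $t^{-1}$ into $t^{\gamma-1}$) with the approximation property $\|(I-P_h)A^{-\sigma}\|_{\mathcal{L}(H_0)}\le Ch^{2\sigma}$ and, crucially, with the quasi-uniformity of the mesh, which supplies the inverse inequality needed to move the positive power $A^{\gamma/(\alpha+1)}$ past $(I-P_h)$ at the cost of a controlled power of $h$. The choice of $\sigma$ is admissible because $\beta<\frac{\gamma_1}{\alpha+1}$ and $\gamma<\frac12-\frac1p-\gamma_1$ force $\sigma<\frac{\frac12-\frac1p}{\alpha+1}<\frac{1}{1+\alpha}$, consistent with the hypothesis $U_0\in L^p(\Omega;\mathcal{D}(A^{1/(1+\alpha)}))$. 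This quasi-uniformity-dependent derivative estimate is the technical heart of the argument.
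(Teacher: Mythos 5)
Most of your proposal coincides with the paper's own proof. The identification of \eqref{eq:UUU}--\eqref{eq:uh} as an instance of the abstract pair with $H_0=H_1=H_2=H$, $\Psi^i_h=h^{-2\beta}E_h$ and $r_i\equiv h^{2\beta}$, the verification of Assumptions \ref{hypogeneral}--\ref{ass:semigroup:app} via \eqref{eq:esti2}--\eqref{eq:esti3} (with $h_i\in L^1$ exactly when $\beta<\gamma_i/(\alpha+1)$), the application of Theorem \ref{thm:main} and Corollary \ref{cor:main}, the bound $\|e(0)\|_{L^p(\Omega;H_0)}\le Ch^{2/(1+\alpha)}\|A^{1/(1+\alpha)}U_0\|_{L^p(\Omega;H_0)}$, and the use of \eqref{eq:esti1} with $2\beta$ in place of $\beta$ for $\|e_0\|_{L^p(\Omega;C([0,T];H_0))}$ in part (b) are precisely the paper's steps. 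Your handling of the drift error operator --- keeping $\Psi^2_h=h^{-2\beta}E_h$ and choosing $\gamma_2$ close to $1$ so that $\beta<\gamma_2/(\alpha+1)$ while $\gamma<1-\gamma_2$ --- is in fact cleaner than the paper's literal invocation (which sets $\Psi^2_n=0$ even though the drift convolution in \eqref{eq:uh} is also discretized).

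The genuine gap is in your treatment of $\|e_0\|_{L^p(\Omega;C^\gamma([0,T];H_0))}$ in part (a). You reduce it to the estimate $\|\dot E_h(t)A^{-\sigma}\|_{\mathcal{L}(H_0)}\le Ch^{2\beta}t^{\gamma-1}$ with $\sigma=\beta+\frac{\gamma}{\alpha+1}$, which, once available, does yield \eqref{eq:femest2} by integrating increments (and would even be sharper than the paper, requiring only $U_0\in L^p(\Omega;D(A^\sigma))$). But the ingredients you list for proving it --- the smoothing bound \eqref{eq:smp2} with negative powers, the approximation property $\|(I-P_h)A^{-\sigma}\|_{\mathcal{L}(H_0)}\le Ch^{2\sigma}$, and an inverse/stability inequality from quasi-uniformity --- only control the projection part $E^2_h(t)=S(t)(I-P_h)$ of the splitting $E_h=E^1_h+E^2_h$ used in Proposition \ref{prop:estimateEh}. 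They say nothing about the Galerkin part $E^1_h(t)=[S(t)-S_h(t)]P_h$, which is the hard term: there the claimed mixed bound (full spatial rate $h^{2\beta}$ paired with the integrable singularity $t^{\gamma-1}$) cannot be produced by commuting powers of $A$ past $I-P_h$; it requires combining the Laplace-transform-based nonsmooth derivative estimate \eqref{eq:esti3} with a uniform-in-$h$ smooth-data bound built from the discrete smoothing estimate \eqref{eq:shddd} and the quasi-uniformity estimate $\|A_h^{\mu}P_hA^{-\mu}\|_{\mathcal{L}(H_0)}\le C$, and then interpolating in the scale of fractional powers of $A$ --- an argument entirely absent from your sketch. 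This is not a routine verification: the paper itself remarks that a sharp, direct estimate on $\|e_0\|_{C^\gamma([0,T];H_0)}$ of this type is not available in the finite element literature and is beyond its scope. The paper instead sidesteps the issue by the cruder interpolation $\|e_0\|_{C^\gamma([0,T];H_0)}\le C\|e_0\|^{\gamma}_{C^1([0,T];H_0)}\|e_0\|^{1-\gamma}_{C([0,T];H_0)}$, where the $C^1$-norm is bounded uniformly in $h$ (this is where quasi-uniformity enters, through $\|A^{-\delta}A_h^{\delta}P_h\|_{\mathcal{L}(H_0)}\le C$) and the $C$-norm carries the power of $h$ via \eqref{eq:esti1}; the exponent count $(1-\gamma)\frac{2}{(1+\epsilon)(1+\alpha)}>2\beta$ then gives \eqref{eq:femest2}, at the price of the stronger hypothesis $U_0\in L^p(\Omega;D(A^{1/(1+\alpha)}))$. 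So your overall strategy is viable and potentially stronger, but as written the estimate at its heart is unproved, and for $E^1_h$ the listed tools would fail to prove it.
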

\begin{proof}
Let $0< \gamma_1<\frac12-\frac{1}{p}$. Then,
\begin{equation}\label{eq:tga1}
t^{\gamma_1}\|\dot{S}(t)x\|_{H_0}+t^{\gamma_1-1}\|S(t)x\|_{H_0}\leq Ct^{\gamma_1-1}\|x\|_{H_0}:=s_1(t)\|x\|_{H_0}
\end{equation}
and thus $s_1\in L ^1 ((0,T];\RR ^+_0)$. \fahimm{Furthermore, by Proposition \ref{prop:estimateEh},}
\begin{equation}\label{eq:tga2}
t^{\gamma_1}\|\dot{E}_h(t)x\|_{H_0}+t^{\gamma_1-1}\|E_h(t)x\|_{H_0}\leq Ch^{2\beta}t^{-{\beta}(\alpha+1)-1+\gamma_1}\|x\|_{H_0}:=h_1(t)h^{2\beta}\|x\|_{H_0},
\end{equation}
for $0<h<1$. We have that $h_1\in L ^1 ((0,T];\RR ^+_0)$ if and only if $-{\beta}(\alpha+1)-1+\gamma_1>-1$; that is, when $\beta<\frac{\gamma_1}{\alpha+1}$. Then, the error bound in \eqref{eq:femest1} follows from Theorem \ref{thm:main} for each fixed $0<h<1$ with $\Psi_{n}^1=h^{-2\beta}E_{h}$, $r_1(n)=h^{2\beta}$ for all $n\in \mathbb{N}$ and $\Psi_n^2=0$, $r_2(n)=h^{2\beta}$ for all $n\in \N$ using \eqref{eq:tga1} and \eqref{eq:tga2} and noting that the function $h_1$ in \eqref{eq:tga2} is independent of $h$ and then so is the constant $C$ in the error estimate \eqref{eq:mee} of Theorem \ref{thm:main}. To show \eqref{eq:femest2} note that, by a standard finite element estimate, we have
\begin{equation}\label{eq:femin}
\|e(0)\|_{L^p(\Omega;H_0)}=\|(I-P_h)U_0\|_{L^p(\Omega;H_0)}\leq Ch^{\frac{2}{1+\alpha}}\|A^{\frac{1}{1+\alpha}}U_0\|_{L^p(\Omega;H_0)}.
\end{equation}
Furthermore,
for $x\in H_0$, by choosing $\chi=P_hx$ in \eqref{eq:vstroh}, we see that the function $t\mapsto S_h(t)P_hx$ is the unique solution of  \eqref{eq:vstroh}; that is,
$$
\dot{S}_h(t)P_hx+\frac{1}{\Gamma(\alpha)}\int_0^t (t-s)^{\alpha-1}A_hS_h(t)P_hx\,d s=0,\,t>0,
$$
and therefore it follows that
$$
\|\dot{S_h}(t)P_hx\|_{H_0}\leq Ct^\alpha \|A_hP_hx\|_{H_0},
$$
where we used the stability estimate $\|S_h(t)P_h\|_{\mathcal{L}(H_0)}\leq C$, $t\geq 0$. Using also \eqref{eq:shd} we conclude, by interpolation that
\begin{equation}\label{eq:shddd}
\|\dot{S}_h(t)P_hx\|_{H_0}\leq Ct^{\mu(1+\alpha)-1}\|A_h^{\mu}P_hx\|_{H_0}, \,\mu\in [0,1].
\end{equation}
Therefore, using \eqref{eq:smp2} and  \eqref{eq:shddd}  with $\mu=\frac{1}{1+\alpha}$, it follows that
\begin{align*}
&\|\dot{E}_h(t)x\|_{H_0}\leq \|\dot{S}_h(t)P_hx\|_{H_0}+ \|\dot{S}(t)x\|_{H_0}\leq \fahimm{C}\|A_h^{\frac{1}{1+\alpha}}P_hx\|_{H_0}+\fahimm{C}\|A^{\frac{1}{1+\alpha}}x\|_{H_0}\\
&\quad= \fahimm{C}\|A_h^{\frac{1}{1+\alpha}}P_hA^{-\frac{1}{1+\alpha}}A^{\frac{1}{1+\alpha}}x\|_{H_0}+\fahimm{C}\|A^{\frac{1}{1+\alpha}}x\|_{H_0} \\
&\quad \leq C\|A_h^{\frac{1}{1+\alpha}}P_hA^{-\frac{1}{1+\alpha}}\|_{\mathcal{L}(H_0)}\| A^{\frac{1}{1+\alpha}}x\|_{H_0} +\fahimm{C}\|A^{\frac{1}{1+\alpha}}x\|_{H_0}\leq C \|A^{\frac{1}{1+\alpha}}x\|_{H_0}, \,t\in [0,T].
\end{align*}
Here we also used that, by the self-adjointness of $A^{-1},A_h$ and $P_h$,
\begin{align*}
\|A_h^{\delta}P_hA^{-\delta}\|_{\mathcal{L}(H_0)}&=\|P_hA_h^{\delta}P_hA^{-\delta}\|_{\mathcal{L}(H_0)}=\|(P_hA_h^{\delta}P_hA^{-\delta})^{*}\|_{\mathcal{L}(H_0)}\\
&=\|(A^{-\delta})^*(P_hA_h^{\delta}P_h)^*\|_{\mathcal{L}(H_0)}=\|A^{-\delta}A_h^{\delta}P_h\|_{\mathcal{L}(H_0)}\leq C,
\end{align*}
where the last inequality holds for $\delta\in [0,1]$ for quasi-uniform meshes, see, for example, the proof of Theorem 4.4 (iv) in \cite{weak}. Thus, using interpolation in H\"older spaces and the smooth data estimate \fahim{from}
\eqref{eq:esti1},
\begin{equation*}
\|e_0\|_{C^\gamma([0,T];H_0)}\leq C \|e_0\|_{C^1([0,T];H_0)}^{\gamma}\|e_0\|_{C([0,T];H_0)}^{1-\gamma}\leq C h^{(1-\gamma)\frac{1}{1+\alpha}\frac{2}{1+\epsilon}}\|A^{\frac{1}{1+\alpha}}U_0\|_{H_0}.
\end{equation*}
As
$$
(1-\gamma)\frac{1}{1+\alpha}\frac{2}{1+\epsilon}>\frac{2\gamma_1}{1+\epsilon}\frac{1}{1+\alpha}+\left(\frac{1}{p}+\frac{1}{2}\right)\frac{1}{1+\alpha}\frac{2}{1+\epsilon}>\frac{2\gamma_1}{1+\epsilon}\frac{1}{1+\alpha}
$$
it follows that
$$
\|e_0\|_{L^p(\Omega;C^\gamma([0,T];H_0))}\leq Ch^{2\beta} \|A^{\frac{1}{1+\alpha}}U_0\|_{L^p(\Omega;H_0)}
$$
for all $\beta<\frac{\gamma_1}{\alpha+1}$ and the proof of \eqref{eq:femest2} is complete in view of \eqref{eq:femest1} and \eqref{eq:femin}.
Next, the estimate  \eqref{eq:femest1a} follows from Corollary \ref{cor:main} in view of \eqref{eq:tga1} and \eqref{eq:tga2}. Finally, using \eqref{eq:esti1}, we immediately conclude that
$$
\|e_0\|_{L^p(\Omega;C([0,T];H_0))}\leq Ch^{2\beta} \|A^{\beta(1+\epsilon)}U_0\|_{L^p(\Omega;H_0)},
$$
which finishes the proof of \eqref{eq:femest2a} in view of \eqref{eq:femest1a} and the proof of the theorem is complete.
\end{proof}
\begin{remark}
If $U_0$ is deterministic, then we may take $p$ arbitrarily large in Theorem \ref{thm:femholder} (similarly, in Theorem \ref{thm:example1} in case $u_0$ and $u_1$ are deterministic). We also point out that the estimate on $\|e_0\|_{C^\gamma([0,T];H_0)}$ in the proof of Theorem \ref{thm:femholder} is not sharp in terms of the regularity of the initial data. This follows from the fact that we estimate the $\gamma$-H\"older norm by interpolation and not directly and hence more regularity on $U_0$ is assumed than what is necessary. However, a sharp, direct estimate on $\|e_0\|_{C^\gamma([0,T];H_0)}$ is not available in the finite element literature, and a derivation would be beyond the scope of this paper.
\end{remark}
\begin{remark}[Stochastic heat equation]\label{rem:she} Here we briefly comment on the stochastic heat equation which also fits in our abstract framework. Suppose that $F$ and $G$ are as above and $S(t):=e^{-tA}$ is the heat semigroup and $S_h(t):=e^{-tA_h}P_h$, $t\geq 0$. In this case the well-known error estimates, see \cite[Chapter 3]{Thomeebook},
\begin{align}
&\|E_h(t)x\|_{H_0}\leq Ch^\beta \|A^{\frac{\beta}{2}}x\|_{H_0},\quad \beta\in [0,2],\, x\in \mathcal{D}(A^{\beta}),\,t\in [0,T];\label{eq:esti11}\\
&\|E_h(t)\|_{\mathcal{L}(H_0)}\leq Ch^{2\beta}t^{-\beta},\quad \beta\in [0,1],\,t\in (0,T];\label{eq:esti12}\\\
&\|\dot{E}_h(t)\|_{\mathcal{L}(H_0)} \leq Ch^{2\beta}t^{-\beta-1},\quad \beta\in [0,1],\,t\in (0,T],\label{eq:esti13}
\end{align}
hold for $0<h< 1$. Note that these are essentially \eqref{eq:esti1}-\eqref{eq:esti3} for $\alpha=0$. Then, similarly as in the proof of Theorem \ref{thm:femholder} we get, for $p>2$, $0< \gamma_1<\frac12-\frac{1}{p}$, $\gamma<\frac12-\frac{1}{p}-\gamma_1$ and $\beta<\gamma_1$ that the error estimate
\begin{equation}\label{eq:femest3}
 \|e\|_{L^p(\Omega;C^\gamma([0,T];H_0))}\leq C(T,p,\beta)\lk( \|e_0\|_{L^p(\Omega;C^\gamma([0,T];H_0))} +\|e(0)\|_{L^p(\Omega;H_0)}+Ch^{2\beta}\rk)
\end{equation}
holds, where $\Err(t) := U(t)-U_h(t)$. In particular we have
$$
 \|e\|_{L^p(\Omega;C([0,T];H_0))}\leq C(T,p,\beta)\lk( \|U_0\|_{L^p(\Omega;A^{\beta})} +1\rk)h^{2\beta}
$$
for $\beta<\frac12-\frac1{p}$. This result is consistent with \cite[Proposition 4.2]{sonja3} but less smoothness on the noise is assumed here; that is, we may take $\delta_G=0$.

\end{remark}
\begin{remark}
In \cite{mishi1}, a simplified version of \eqref{eq:fwe} was considered with $\Gamma(u)=I$ and $F=0$ (linear equation, additive noise). It was shown there that if $Q$ has finite trace then
$$
\sup_{t\in [0,T]}\|e(t)\|_{L^2(\Omega;H_0)}\leq C\lk( \|U_0\|_{L^2(\Omega;A^{\frac{1+\epsilon}{2(1+\alpha)}})} +1\rk)h^{\frac{1}{1+\alpha}}.
$$
This is consistent with  Theorem \ref{thm:femholder} as in this case  we may first take $U_0=0$  and hence take $p$ in \eqref{eq:femest1a} arbitrarily large and then add the estimate for the initial term.
\end{remark}

\section{Numerical experiments}\label{sec:ne}
In this section, we will illustrate our theoretical results by some numerical experiments. The underlying equation we consider is the fractional stochastic wave equation \eqref{eq:fwe}, where $\mathcal{D}=[0,1]$, $F=0$, $\Gamma(U)=I$, and $A=-\Delta$ is the Laplacian with Dirichlet boundary conditions in $H_0=(L^2(\mathcal{D}),\|\cdot\|)$ with inner product denoted by $\inner{\cdot}{\cdot}$.
In particular, we will implement the numerical solution for the following equation:
\begin{equation}\label{eq:num1}
\left\{
\begin{aligned}
dU(t,x)-\int_0^tb(t-s)\Delta U(s,x)\,ds\,dt&= Q^{\frac12}d \fahim{W_H}(t,x),\,t\in(0,1],\,x\in \mathcal{D}; \\
U(0,x)&=\sin(\pi x):=U_0(x),\,x\in \mathcal{D},
\end{aligned}
\right.
\end{equation}
\fahim{where $W_H$ is a
$H$-cylindrical Wiener process with $H=H_0$, $b(t)=t^{\alpha-1}/\Gamma(\alpha),$ $\alpha \in (0,1)$}, and $Q:H\to H$ is symmetric, bounded, and positive semidefinite. 

\medskip

In \fahim{Subsection} \ref{sec:numerical:MLEI}, we apply the spectral Galerkin method based on the eigenvalues $\lambda_k=k^2\pi^2$ and the orthonormal basis of corresponding eigenfunctions $\{e_k:k\in \NN  \}$. For the driving noise we take space time white noise; that is, $Q=I$. In particular, we take \fahim{$W_H$ to be given by the formal series \(W_{\fahimm{H}}(t,x)=\sum_{k=1}^{\infty} e_k(x)\beta^k(t) \),} $x\in \mathcal{D}$, $t\geq 0$,
where $\{\beta^k:k=1,2,\dots\}$ is a family of  mutually independent standard scalar Brownian motions.
To perform the integration in time, we use the Mittag–Leffler Euler integrator (MLEI) method, developed for semilinear problems in \cite{KLS20}. In the present linear setting this method is exact, that is, no additional time-discretization error is introduced and we may simulate the spatially approximated process exactly on a time-grid.

In \fahim{Subsection} \ref{sec:numerical:LCQM}, we approximate the solution of \eqref{eq:num1} by finite elements.
We consider a Wiener process which is of trace class given by
\begin{equation}\label{eq:wqtr}
Q^{\frac12}\fahim{W_H(t,x)}:=1_{[0,0.5]}(x)\beta(t), \fahim{x\in \mathcal{D}, t\in [0,1],}
\end{equation}
where $\beta$ is a scalar Brownian motion and $1_{[0,0.5]}$ is the characteristic function of the interval $[0,0.5]$. That is, the Fourier expansion of the driving Wiener process contains a single term only and thus its covariance operator is of rank 1 and hence trace class. The motivation for the particular choice of the Wiener process is to consider trace class noise which does not possess additional spatial smoothness. This is needed so that we do not observe higher convergence rate, due to additional regularity, in the numerical experiments than predicted by the theory for trace class noise. 
To perform the  time integration we implement a Lubich Convolution Quadrature (LCQ) method, for details see  \cite{Lubich-1,Lubich-2}.
This method was successfully applied to a similar problem of the third author in \cite{mishi1}.
The LCQ method is easier to implement than the  MLEI in case of  finite elements and a correlated  noise.

\subsection{The spectral Galerkin method and the MLEI-method}\label{sec:numerical:MLEI}
The mild solution of \eqref{eq:num1} with space time white noise can be written as 
\begin{equation}\label{eq:num_mild}
U(t)=S(t)U_0+\sum_{k=1}^{\infty} \int_0^tS(t-\tau)  e_k ~d\beta^k(\tau),
\end{equation}
where, as shown in   \cite{KLS20},
the resolvent family $\{S(t)\}_{t\geq 0}$ can be represented as
\begin{equation}\label{num_2}
S(t)v=\sum\limits_{k=1}^{\infty} E_{\alpha+1}(-\lambda_kt^{\alpha+1})(v,e_k)e_k,\, t>0,
\end{equation}
where $E_{\rho}(z)$, $\rho>0$, is the one parameter \fahim{Mittag-Leffler} function (MLF) defined by
$$
E_{\rho}(z):=\sum\limits_{k=0}^{\infty} \frac{z^k}{\Gamma(\rho k+1)},\quad z\in \CC.
$$
{For more details about Mittag-Leffler function and their application, we refer to the paper \cite{Leffler1903} and the book \cite{Gorenflo2010}.} Moreover, in order to implement the MLF, we use the Matlab function mlf.m, see \cite{Pod2009}.

Let $\Pi=\{0=t_0<t_1<\cdots<t_M=1\}$ be a partition of the time interval $[0,1]$. {From the representation given in \eqref{eq:num_mild} we get for $m=0,1,2,3,\dots,M$}
\begin{equation}\label{eq:num_mild2}
U(t_m)=S(t_m)U_0+\sum_{k=1}^{\infty} \int_0^{t_m}S(t_m-s)  e_k(x) ~d\beta^k(s).
\end{equation}
For the discretization in space, we introduce the finite dimensional subspaces $\fahim{H^N}=\text{span}\{e_k:k=1,2,\dots,N \}$ of $H$ and the orthogonal projection $\mathcal{P}_N:H\to \fahim{H^N}$ given by
$$\mathcal{P}_N v= \sum\limits_{k=1}^{N}(v,e_k)e_k,\quad v\in H.
$$
Using \eqref{num_2} we then get
$$
S_N(t) v:= S(t)\mathcal{P}_N v =\sum\limits_{k=1}^{N} E_{\alpha+1}(-\lambda_kt^{\alpha+1})(v,e_k)e_k.
$$
This way we obtain for the approximation $\bar U^N_m$ of $U(t_m)$ given by \eqref{eq:num_mild} by the Galerkin method
\begin{equation}\label{eq:num_mild3}
\bar U_m^N=S_N(t_m)\bar U_0^N+\sum_{k=1}^{N} \int_0^{t_m}S_N(t_m-s) e_k ~d\beta^k(s),
\end{equation}
with initial value $\bar U_0^N=\mathcal{P}_N U_0$.
Let us define $\bar U_{m,k}^N$ by
\begin{equation}\label{eq:num_mild4}
\bar U_{m,k}^N=E_{\alpha+1}(-\lambda_kt_m^{\alpha+1}) \bar U^N_{0,k}+ \mathcal{O}_k(t_m),
\end{equation}
where $\bar U^N_{0,k}=(U(0),e_k)$ and
$$
\mathcal{O}_k(t_m):=\int_0^{t_m}E_{\alpha+1}(-\lambda_k(t_m-s)^{\alpha+1}) ~d\beta^k(s).
$$
Then, \eqref{eq:num_mild3} can be rewritten as
$$\bar U_m^N=\sum\limits_{k=1}^{N} \bar U_{m,k}^{N}e_k.
$$
%
To simulate the stochastic convolution process let us observe  that $$\mathcal{N}:= (\mathcal{O}_k(t_1),\mathcal{O}_k(t_2),\dots,\mathcal{O}_k(t_M))^\top$$ is a $M$-dimensional Gaussian random variable with zero mean and covariance matrix $ R=(R_{i,j}  )_{i,j=1}^{M}$
$$R_{i,j}=\int_0^{t_i\wedge t_j}E_{\alpha+1}(-\lambda_k(t_i-s)^{\alpha+1})E_{\alpha+1}(-\lambda_k(t_j-s)^{\alpha+1}) ~ds.$$
Thus, $\mathcal{N}$ can be represented as $K\chi$, where $\chi$ is an $M$-dimensional standard Gaussian random variable and $K$ is the solution of equation $KK^T=R$ (see Theorem 2.2 of \cite{Gut2009});  the equation $KK^T=R$ can be solved by the Cholesky factorization.

\begin{figure}[!ht]
	\centering
	\includegraphics[width=0.65\linewidth]{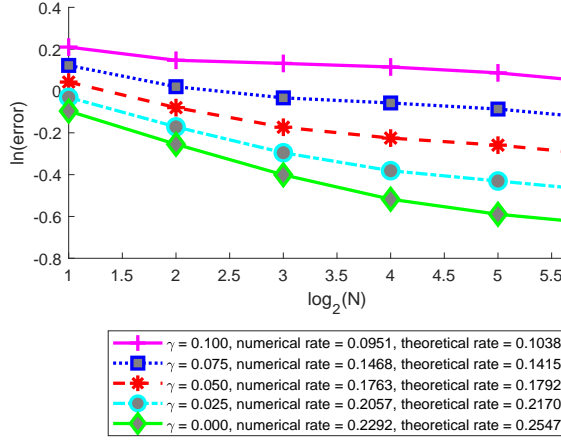}
	\caption{The approximation error for the spectral Galerkin method and the MLEI-method in the $L^2(\Omega;C^\gamma([0,T];H_0))$-norm with $\alpha=0.325$.}
	\label{fig:GA1}
\end{figure}

In our numerical experiment, we simulate $100$ sample paths to verify the  rate of convergence in the $L^2(\Omega;C^\gamma([0,T];H_0))$-norm
for different $\gamma$ and $\alpha$.   According to Example \ref{ex:sg3} we expect theoretical rate of 
$\nu<\frac{\gamma_1}{1+\alpha}-\delta_G$ in the $L^p(\Omega;C^\gamma([0,T];H_0))$-norm for appropriately smooth and integrable initial data $U_0$, where $\gamma<\frac{1}{2}-\frac1p-\gamma_1$ and $p>2$. Note that the parameter $\alpha$ in Example \ref{ex:sg3} corresponds to $\alpha+1$ in the present example, see Remark \ref{rem:corresp}.  Taking into account that $\lambda_N=N^2\pi^2$ and that $Q=I$ and hence $\delta_G>\frac{1}{4}$ we obtain
a rate in $N$ of almost $2(\tfrac{1}{2}-\tfrac1p-\gamma)/(1+\alpha)-\frac12$.
 Note that since $U_0$ is a deterministic eigenfunction of $A$ and thus $U_0\in L^p(\Omega;\mathcal{D}(A^s))$ for any $p>2$ and $s\geq 0$, we may bound the $L^2(\Omega;C^\gamma([0,T];H_0))$-norm by the $L^p(\Omega;C^\gamma([0,T];H_0))$-norm for any $p>2$ and hence we expect a rate in $N$ of almost $(1-2\gamma)(1+\alpha)-\frac12$ in the $L^2(\Omega;C^\gamma([0,T];H_0))$-norm. In the simulations, we chose a small time step $\Delta t=t_k-t_{k-1}=0.001$, $k=0,1,\dots,M$ and vary the dimension of the finite dimensional approximation space $\fahim{H^{N_i}}$, $i=1,2,\dots,6$, with $N_i=2^i$. 
To estimate the error, we computed  a reference solution with \fahim{ $N=2^{13}$.} 
In Figure \ref{fig:GA1}, we present the error of the numerical approximation in the $L^2(\Omega;C^\gamma([0,T];H_0))$-norm for $\alpha=0.325$ with varying $\gamma$ (see also  Figure \ref{fig:GA2} and Figure \ref{fig:GA3} for $\alpha=0.35$ and $\alpha=0.375$, respectively). \fahim{In Figures \ref{fig:GA1}-\ref{fig:GA3}, we also compute the numerical rate of convergence given by
\begin{equation}\label{rate_numeric_formula}
\min_{i=1,2,3,4,5}-\frac{\ln\left( \frac{\text{error}_{\gamma}(N_i)}{\text{error}_{\gamma}(N_{i+1})} \right)}
{\ln\left( \frac{N_i}{N_{i+1}}\right)},
\end{equation}	
for $\gamma=0,0.025,0.05,0.075,0.1$ where $\text{error}_{\gamma}(N_i)$ is the error of the numerical approximation in the $L^2(\Omega;C^\gamma([0,T];H_0))$-norm when the dimension of $H^{N_i}$ is $N_i$.
}Here, one may observe that if $\gamma$ decreases, then the rate of convergence increases. Moreover, Figures \ref{fig:GA1}-\ref{fig:GA3} also show that the numerical rate of convergence is close to the theoretical rate.

\begin{figure}[!ht]
	\centering
	\includegraphics[width=0.6\linewidth]{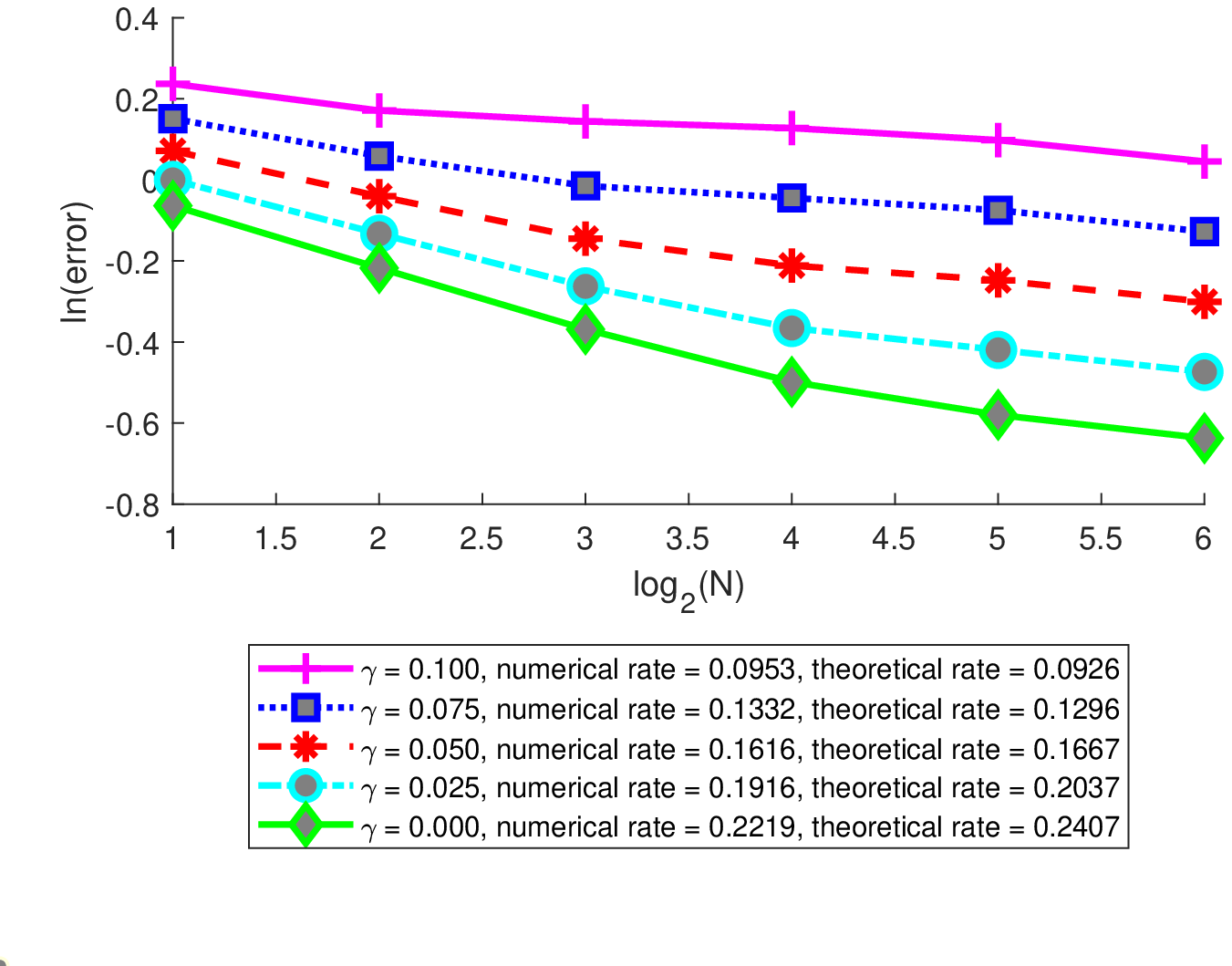}
	\caption{The approximation error for the spectral Galerkin method and the MLEI-method in the $L^2(\Omega;C^\gamma([0,T];H_0))$-norm with $\alpha=0.35$.}
	\label{fig:GA2}
\end{figure}

\begin{figure}[!ht]
	\centering
	\includegraphics[width=0.6\linewidth]{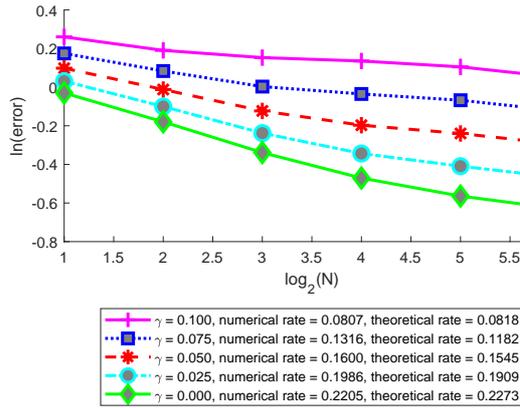}
	\caption{The approximation error for the spectral Galerkin method and the MLEI-method in the  $L^2(\Omega;C^\gamma([0,T];H_0))$-norm with $\alpha=0.375$.}
	\label{fig:GA3}
\end{figure}


\subsection{The finite element method and the LCQ-method}\label{sec:numerical:LCQM}

We first perform a time discretization of \eqref{eq:num1} with $Q^{\frac12}W_H$ given by \eqref{eq:wqtr} by the first order LCQ-method, for more details see e.g.\ \cite{mishi1}. To describe the first order LCQ method, let $\Pi=\{0=t_0<t_1<t_2,\cdots<t_M=1\}$ be an equidistant partition of the time interval $[0,1]$ with time step size $\Delta t=t_{m}-t_{m-1}$, $m=1,2,3,\dots, M$.  The approximation
of a convolution term
$$
\int_0^{t_m} b(t_m-s)g(s)\, ds
$$
is then given by
$$
\sum_{i=1}^m\omega_{m-i}g(t_i),
$$
where the weights $\{\omega_k:k\in\NN\cup \{0\}\}$
are chosen such that 
$$
\sum_{k=0}^\infty \omega_kz^k=\widehat b\lk(\frac {1-z}{\Delta t}\rk),\quad |z|<1.
$$
This is a first order quadrature; that is, it has an approximation order of $\mathcal{O}(\Delta t)$. Applying the  LCQ-method, the  equation for the approximation  $\bar U$, where  $\bar U_n(x)\approx  U(t_n,x)$, can be written  as follows
\begin{equation}\label{eq:numFE1}
\bar U_n-\bar U_{n-1}+\Delta t\Big(\sum\limits_{i=1}^{n}\omega_{n-i}A \bar U_i \Big)=1_{[0,0.5]} \Delta_n \beta,\quad \bar U_0=U(0),
\end{equation}
where  $\Delta_n \beta=\beta(t_n)-\beta(t_{n-1})$, $n=1,2,\dots, M$.

Secondly, we discretize \eqref{eq:numFE1} by linear finite elements. {Let us consider a partition of the domain} $\mathcal{D}=[0,1]$ given by $\{0=x_0<x_1<x_2<\cdots<x_N=1\}$ with constant mesh size $h=x_{m+1}-x_{m}$, $m=0,1,\dots, N-1$. Let us denote the  finite element spaces by $\{ V_h \}_{0<h<1}$, where $V_h=\text{span}\{\varphi_k:k=1,2,\dots,N-1 \}\subset \fahim{H^1_0(\mathcal{D})}$ with $\varphi_k$ being a standard hat function in the $1$-D finite element method \cite{Larson2010}. We introduce the discrete Laplacian
\begin{equation}\label{def:Ah2}
A_{h}:V_h\to V_h,   \quad
\inner{A_{h} \xi}{ \chi} = \inner{  \xi'}{\chi'},\quad \xi,\chi \in V_h,
\end{equation}
where $v'=\frac{dv}{dx}$ denotes the derivative, and the orthogonal projection
$$
P_{h}: H_0 \to V_h,\quad
\inner{P_{h} f}{ \chi} = \inner{ f} {\chi},\quad f\in H_0,~ \chi \in V_h.
$$
In order to obtain the numerical formulation for \eqref{eq:num1}, we compute a $V_h$-valued random variable $\bar U^h_{n}$ satisfying for all $k=1,2,\dots,N-1$
\begin{equation}\label{eq:numFE2}
\left\{
\begin{aligned}
(\bar U_n^h,\varphi_k)&= (\bar U_{n-1}^h,\varphi_k)- \Delta t\Big(\sum\limits_{i=1}^{n}\omega_{n-i}(A_h\bar  U^h_{i},\varphi_k)\Big)+ (1_{[0,0.5]} ,\varphi_k)\Delta_n \beta; \\
(\bar U^h_0,\varphi_k)&=(\sin(\pi x),\varphi_k),
\end{aligned}
\right.
\end{equation}
where $\bar U_n^h(x)=\sum\limits_{k=1}^{N-1} \bar U_{n,k}^h\varphi_k(x)\approx U(t_n,x)$.
From \eqref{def:Ah2} we then obtain
\begin{equation}\label{eq:numFE3}
\left\{
\begin{aligned}
\lefteqn{\sum\limits_{m=1}^{N-1} [(\varphi_m,\varphi_k)+\Delta t \omega_0 (\varphi'_m,\varphi'_k)] \bar  U_{n,m}^h}&\\
&= \sum\limits_{m=1}^{N-1} (\varphi_m,\varphi_k)\bar U_{n-1,m}^h -\Delta t  \sum\limits_{m=1}^{N-1} ( \varphi'_m,\varphi'_k) \Big(\sum\limits_{i=1}^{n-1}\omega_{n-i} \bar U_{i,m}^h \Big)+ (1_{[0,0.5]} ,\varphi_k)\Delta_n \beta; \\
&\sum\limits_{m=1}^{N-1} (\varphi_m,\varphi_k) \bar U_{0,m}^h =(\sin(\pi x),\varphi_k).
\end{aligned}
\right.
\end{equation}
The above system can be rewritten in the following form
\begin{equation}\label{eq:numFE4}
\boldsymbol{\bar U_n^h}= (\boldsymbol{K}+\Delta t \omega_0 \boldsymbol{L} )^{-1}(\boldsymbol{K} \boldsymbol{\bar U_{n-1}^h}-\Delta t \sum_{i=1}^{n-1}\omega_{n-i}\boldsymbol{L} \boldsymbol{\bar U_i^h}+\boldsymbol{J}\Delta_n\beta).
\end{equation}
Here, the vectors $\boldsymbol{\bar U_n^h}$ and $\boldsymbol{J}$ are defined by $\boldsymbol{\bar U_n^h}=(\bar U_{n,1}^h,\dots,\bar U_{n,N-1}^h)^\top
$ and $\boldsymbol{J}=(J_1, J_2,\dots, J_{N-1})^\top$ where $J_k=(1_{[0,0.5]} ,\varphi_k)$ for $k=1,\dots,N-1$. Moreover, the stiffness matrix $\boldsymbol K=(K_{i,j}  )_{i,j=1}^{N-1}$ and the mass matrix $\boldsymbol  L=(L_{i,j}  )_{i,j=1}^{N-1}$ are given by
$$K_{i,j}=\int\limits_{0}^{1}\varphi_i(x)\varphi_j(x)~dx,\quad L_{i,j}=\int\limits_{0}^{1}\varphi'_i(x)\varphi'_j(x)~dx,$$
respectively.

\begin{figure}[!ht]
	\centering
	\includegraphics[width=0.65\linewidth]{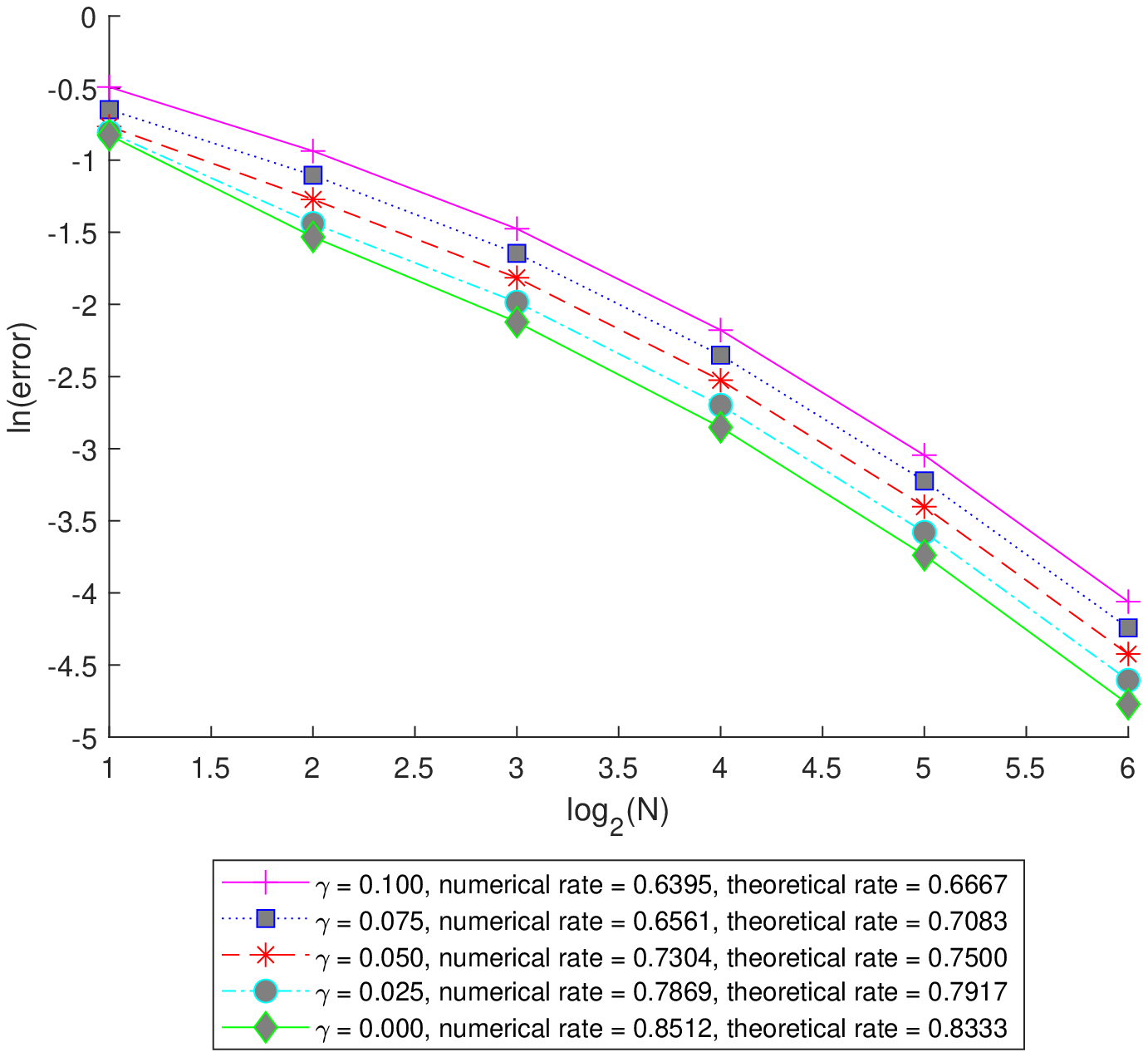}
	\caption{The approximation error for the finite element method and the LCQ-method in the $L^2(\Omega;C^\gamma([0,T];H_0))$-norm with $\alpha=0.2$.}
	\label{fig:FE1}
\end{figure}

In our numerical experiment, we used $500$ sample paths to verify the dependence of the rate of convergence in the $L^2(\Omega;C^\gamma([0,T];H_0))$-norm on $\gamma$ and $\alpha$. The  theoretical rate of convergence is almost   $(1-2\gamma)/(1+\alpha)$ 
according to Theorem \ref{thm:femholder}. Note again that, similarly to the previous example, since $U_0$ is a deterministic eigenfunction of $A$, we may bound the $L^2(\Omega;C^\gamma([0,T];H_0))$-norm by the $L^p(\Omega;C^\gamma([0,T];H_0))$-norm for any $p>2$. In the simulations  we choose fixed a step time $\Delta t=0.0005$ and varying the dimension of the space approximation $\dim V_h=N_i-1=2^i$, $i=1,2,\dots,6$; that is,  we take $h=\frac{1}{N_i}$, $i=1,2,\dots, 6$. Then, in order to measure the error, we computed a reference solution with \fahim{a mesh size $h=\frac{1}{2^{11}}$.} In Figure \ref{fig:FE1}, we present the error of the numerical approximation in the $L^2(\Omega;C^\gamma([0,T];H_0))$-norm for $\alpha=0.2$ with varying values of $\gamma$ (see also  Figure \ref{fig:FE2} and Figure \ref{fig:FE3} for $\alpha=0.25$ and $\alpha=0.3$, respectively). \fahim{Similarly to Section \ref{sec:numerical:MLEI}, in this section we also compute the numerical rate of convergence according to \eqref{rate_numeric_formula} for $\gamma=0,0.025,0.05,0.075,0.1$.}
Here, one may observe that if $\gamma$ decreases, then the rate of convergence again increases. Moreover, the figures also show that the numerical rate of convergence is close to the theoretical rate.

\begin{figure}[!ht]
	\centering
	\includegraphics[width=0.7\linewidth]{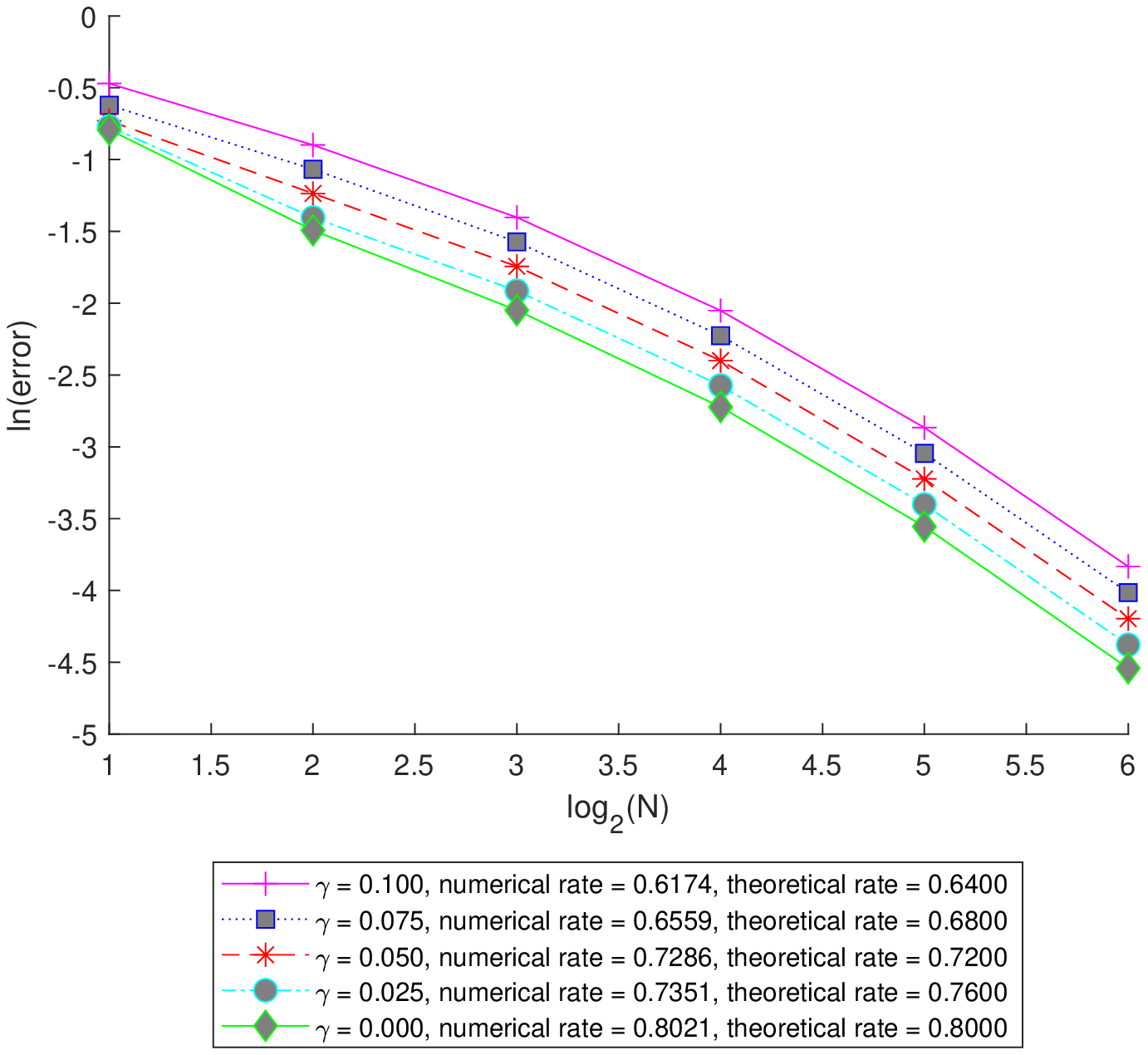}
	\caption{The approximation error for the finite element method and the LCQ-method in the $L^2(\Omega;C^\gamma([0,T];H_0))$-norm with $\alpha=0.25$.}
	\label{fig:FE2}
\end{figure}

\begin{figure}[!ht]
	\centering
	\includegraphics[width=0.7\linewidth]{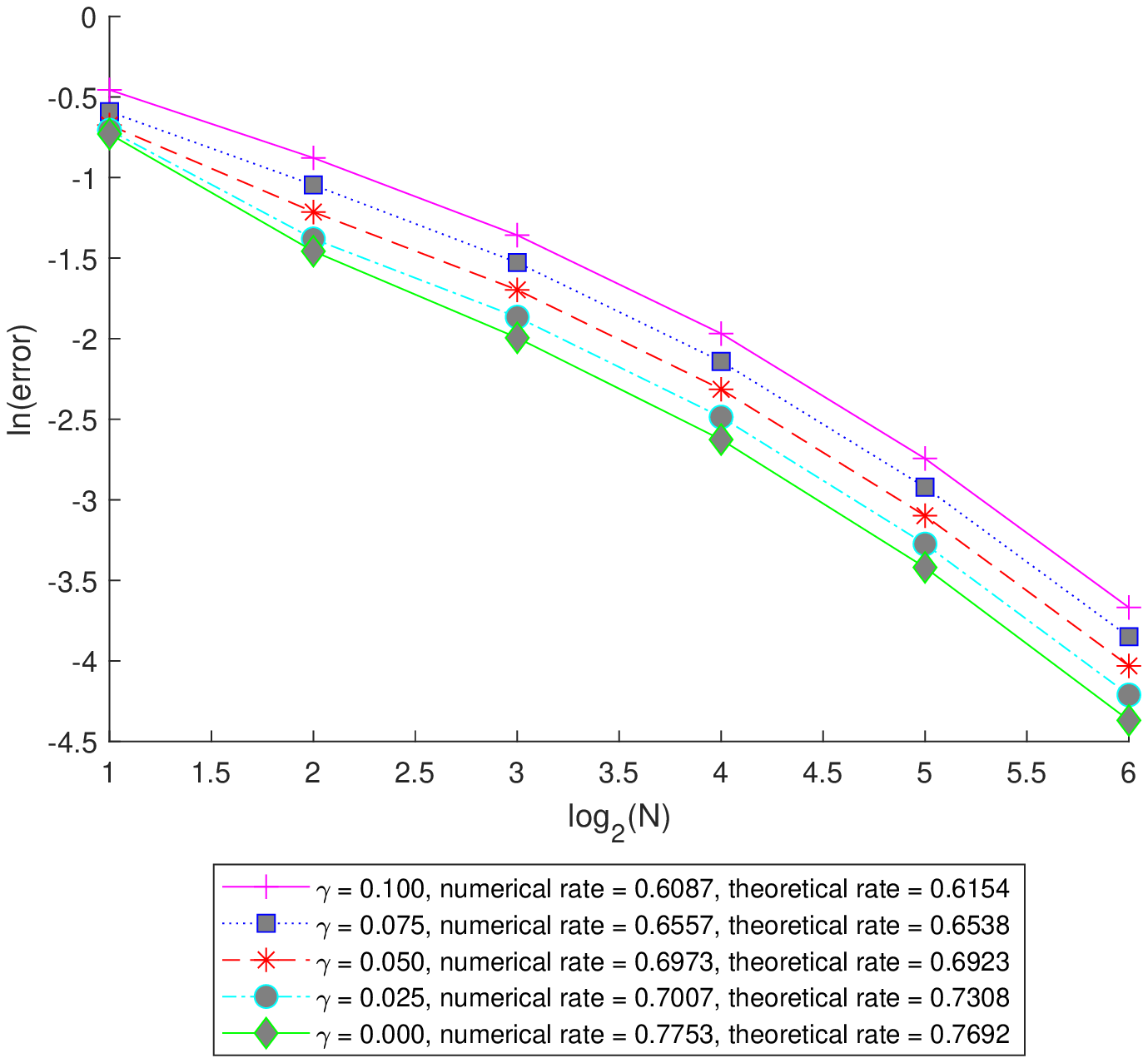}
	\caption{The approximation error for the finite element method and the LCQ-method in the $L^2(\Omega;C^\gamma([0,T];H_0))$-norm with $\alpha=0.3$.}
	\label{fig:FE3}
\end{figure}

\subsection*{Acknowledgement.} The authors would like to thank the anonymous referees for their careful reading of the manuscript and for their useful comments that
helped them to improve the presentation of the paper significantly.

\end{document}